\tikzset{shorten <>/.style={shorten >=#1,shorten <=#1}}
\newcommand{\cod}
 {{\rm cod}}
\newcommand{\Ind}
 {{\rm Ind}}
\newcommand{\comp}
 {\circ}
\newcommand{\Cont}
 {{\bf Cont}}
\newcommand{\dom}
 {{\rm dom}}
\newcommand{\li}
{{\textup{lim } }}
\newcommand{\lan}
{{\textup{lan } }}
\newcommand{\ran}
{{\textup{ran } }}
\newcommand{\colim}
{{\textup{colim}}}
\newcommand{\comma}[2]
{\mbox{$(#1\!\downarrow\!#2)$}}
\newcommand{\empstg}
 {[\,]}
\newcommand{\epi}
 {\twoheadrightarrow}
\newcommand{\hy}
 {\mbox{-}}
\newcommand{\im}
 {{\rm im}}
\newcommand{\imp}
 {\!\Rightarrow\!}
\newcommand{\mono}
 {\rightarrowtail}
\newcommand{\ob}
 {{\rm ob}}
 \newcommand{\Hom}
 {{\rm Hom}}
\newcommand{\op}
 {^{\rm op}}
\newcommand{\Set}
 {{\bf Set }}
\newcommand{\Sh}
 {{\bf Sh}}
\newcommand{\sh}
 {{\bf sh}}
\newcommand{\Sub}
 {{\rm Sub}}
\newcommand{\Spec}
 {{\bf Spec}}
\newcommand{\bigdoublevee}{\big@doubleop{\bigvee}}
\newcommand{\bigdoublewedge}{\big@doubleop{\bigwedge}}
\newcommand{\big@doubleop}[1]{%
  \DOTSB\mathop{\mathpalette\big@doubleop@aux{#1}}\slimits@
}
\newcommand\big@doubleop@aux[2]{%
  \sbox\z@{$\m@th#1#2$}%
  \makebox[1.35\wd\z@][s]{$\m@th#1#2\hss#2$}%
}
\DeclareFontFamily{U}{min}{}
\DeclareFontShape{U}{min}{m}{n}{<-> udmj30}{}
\newtheorem{theorem}{Theorem}[section]
\theoremstyle{proposition}
\newtheorem{proposition}[theorem]{Proposition}
\newtheorem{corollary'}[theorem]{Corollary}
\newtheorem{lemma}[theorem]{Lemma}
\theoremstyle{definition}
\newtheorem{definition}[theorem]{Definition}
\theoremstyle{remark}
\newtheorem*{remark}{Remark}
\title{On Diers's theory of Spectrum II \\ Geometries and dualities}
\author{Axel Osmond}
\date{February 2019}
\begin{document}

\maketitle

\begin{abstract}
    This second part comes to the construction of the spectrum associated to a situation of multi-adjunction. Exploiting a geometric understanding of its multi-versal property, the spectrum of an object is obtained as the spaces of local unit equipped with a topology provided by orthogonality aspects. After recalling Diers original construction, this paper introduces new material. First we explain how the situation of multi-adjunction can be corrected in a situation of adjunction between categories of modeled spaces as in the topos-theoretic approach. Then we come to the 2-functorial aspects of the process relatively to a 2-category of Diers contexts. We propose an axiomatization of the notion of spectral duality through morphisms between fibrations over a category of spatial objects, and show how such situations get back right multi-adjoint functors.
\end{abstract}

\section*{Introduction}


This paper is the sequel of \cite{partI} and concerns the explicit construction of spectra from local right multi-adjoint functors as introduced by Diers in \cite{Diers}. While we focused on the algebraic aspects in the first part, synthesizing the main required material about right multi-adjoint and akin notions, this second part will go into the geometric aspects and deploy as explicitly as possible the construction of the spectrum, and how it can be generalized to a more general notion of \emph{spectral dualities}. If the content of the first part was more expository, this second part will introduce new notions and results. \\

Our contexts of interest involve a locally finitely presentable category of ``ambient" objects, for which we want to construct a notion of spectrum, and a category of ``local objects" equiped with a right multi-adjoint functor into the category of ambient objects. Under each object, one has a cone of local units jointly factorizing any morphism from this object to the right multi-adjoint. In general the right multi-adjoint is also far from being full, and hence induces some non trivial orthogonality structure in the ambient category, defining in particular a class of \emph{diagonally universal morphisms} from a condition of left orthogonality relatively to morphisms in the range of the right multi-adjoint. \\

In section 1, we detail Diers original construction of the spectrum for an ambient object. The spectrum will consist of two objects: the \emph{Diers space} constructed at \cref{spectrum} from the set of local units and the poset of diagonally universal morphisms under it ordered for the \emph{factorization order}, and the \emph{structural sheaf} over the Diers space. Local units are to be thought as points of the Diers space, and diagonally universal morphisms, as basic opens of a spectral topology as seen at \cref{spectraltopo}, a point being in a basic open if the corresponding local unit factorizes through the diagonally universal morphism. Then this spectrum is equiped at \cref{structuralsheaf} with a structural sheaf constructed through left Kan extensions from a functor returning the codomain of a diagonally universal morphism of finite presentation. The functor assigning to each object its spectrum will be shown to be left adjoint to a \emph{global section functor} at \cref{Dierstheorem}.\\

In section 2, we generalize the construction of the spectrum to \emph{structured spaces}, which are spaces with a sheaf of objects in the ambient category. The spectrum of a structured space can be constructed by gluing the ordinary Diers spaces of the stalks as in \cref{Dierspacegeneralcase}, with a topology made from the diagonally universal morphisms under value of the sheaf at opens. The structural sheaf, constructed again from Kan extension of a codomain functor, can be shown to behave locally as the structural sheaf of the spectrum of the stalks at \cref{stalkrestriction}. A crucial point is that the local units under a stalk at a point decompose as a colimit of the local units under the values of the sheaf under all its neighborhoods of this point, as seen is \cref{stalk lemma}. Then this generalized spectrum defines a left adjoint to a forgetful functor between categories of structured spaces, as seen at \cref{generalizedadjunction}. The adjunction associated to a right multi-adjoint in \cite{partI}[section 2] appears then as a restriction of this spectral adjunction to the case over discrete spaces. \\

In section 3, we propose an abstraction of the previous construction and examine its 2-functoriality. We introduce at \cref{dualities} an axiomatization of the spectral dualities as morphisms of fibration over the opposite category of spaces, whose codomain is also an opfibration, and endowed with a left adjoint. Then we show at \cref{globalsections} that such a situation always reduces in particular to an adjunction as obtained in section 1. We also define a 2-category of Diers contexts at \cref{diers contexts} and see how they always define such a duality. Finally, we end with a partial converse result at \cref{MRAJ from duality}, showing that a duality as defined above always produce itself a right multi-adjoint at its restriction over the point.

\section{Spectrum construction in the Diers context}

In this first section, we recall and detail as explicitly as possible Diers's construction of the spectra of an object in the context of a right multi-adjoint $ U : \mathcal{A} \rightarrow \mathcal{B}$. In the following, we may refer to objects in $\mathcal{B}$ as \emph{ambient objects}, and to objects and maps of $\mathcal{A}$ as \emph{local objects and maps}.\\

In several papers following older works of Diers, the set $ X_B$ indexing the local units under an object $B$ in the context of a right multi-adjoint has been called ``the spectrum of $B$". If it is true that Diers's approach is point-set contrarily to other ways to construct spectra, it is abusive to reduce the spectrum of an object to the set of local units, since it corresponds more exactly to the set of points of the spectrum. As explained in \cite{Diers}, the spectrum in itself is far more than just a set of points: it is a topological space equipped with a structural sheaf of $\mathcal{B}$-objects with stalks in $\mathcal{A}$, where the diagonally universal morphisms under a given objects correspond to the open of its spectrum. \\

We also hope that, beside all the additional observations we made to complete Diers original construction, for instance about functoriality or the construction of the spectrum of an arbitrary modelled space, the present version will be of use to make easier to access Diers theory for non francophone people, as the seminal work we build on was seemingly never translated into English and remains hard to find in contrast to its importance. \\

First let us present the conditions isolated by Diers enacting the construction of a spectrum from a right multi-adjoint. 
\begin{definition}
A Diers context is the data of a functor $ U : \mathcal{A} \rightarrow \mathcal{B}$ satisfying the following conditions:\begin{itemize}
    \item $ U$ is right multi-adjoint
    \item $ \mathcal{B}$ is locally finitely presentable
    \item any local unit under $ B$ is the filtered colimit of the all the diagonally universal morphisms from $B$ of finite presentation factorizing it. 
\end{itemize} 
\end{definition}

\begin{remark}
The last condition has both a topological and sheaf theoretic interpretation. Topologically, as local units will play the role of the points of the spectrum and diagonally universal morphisms of finite presentation are a basis for the topology, it says that the focal component at a point is the intersection of all its basic open neighborhood. From a sheaf theoretic aspect, it ensures the correct relation between the codomain of a local unit and the stalk of a certain structural sheaf on the spectrum we are going to construct over the spectrum.\\

However, beware that this condition is not automatically fulfilled in a general context. It is fulfilled when the arrows in $\mathcal{A}$ are induced from a right class of a left-generated factorization system. 
\end{remark}

For each object $B$, the slice categories $ B \downarrow \mathcal{B}$ also locally finitely presentable. Now recall from \cite{partI} that $ \mathcal{D}$ denotes the category of diagonally universal morphisms between finitely presented objects in $\mathcal{B}$. It is a small category with finite colimits. For any $B$, consider the category $\mathcal{D}_B$ whose objects are pushouts of morphisms in $\mathcal{D}$ under $B$; again, we saw that this category has finite colimits. \\

The idea of Diers is that the set $X_B$ of local units under a given object $B$, while insufficient in itself to convey the topological and geometric information encoded in a Diers context, can be equiped with a topology defined from $\mathcal{D}_B$. However, in order to get a topological space, we need to forget about the categorical structure of $\mathcal{D}_B $ and see it as a poset. This is the purpose of the following. 

\begin{definition}
The set $\mathcal{D}_B$ can be equiped with the \emph{order of factorization}: for $ n_1 : B \rightarrow C_1$ and $ n_2 : B \rightarrow C_2$, define $ n_1  \leq n_2 $ if and only if $n_2$ factorizes through $n_1$ for some $n$ \[\begin{tikzcd}
B \arrow[]{r}{n_2} \arrow[]{d}[swap]{ n_1} & C_2 \\ C_1 \arrow[dashed]{ru}[swap]{\exists n}
\end{tikzcd} \]
This order extends to arbitrary diagonally universal morphisms under $B$. \\

In particular, this order extends to the set $X_B$ of local units under $B$: that is, for $ x_1 : B \rightarrow U(A_1) $ and $ x_2 : B \rightarrow U(A_2)$ if there exists some $ n : U(A_1) \rightarrow U(A_2)$ such that
\[  \begin{tikzcd}
B \arrow[]{r}{x_2} \arrow[]{d}[swap]{ x_1} & U(A_2) \\ U(A_1) \arrow[dashed]{ru}[swap]{\exists n}
\end{tikzcd} \]
\end{definition}
\begin{remark}
By left cancellation of diagonally universal morphism, any map $ n$ as in the diagram above must be diagonally universal. Moreover, $\mathcal{D}_B$ inherits posetal analogs of its categorical structure. In particular, $\mathcal{D}_B$ has finite joins: for $ n_1 : B \rightarrow C_1$ and $ n_2 : B \rightarrow C_2$, let $n_1 \vee n_2$ denote the map induced under $B$ by the pushout 
    \[ 
\begin{tikzcd}
B \arrow[d, "n_1"'] \arrow[r, "n_2"] \arrow[rd, "\ulcorner", phantom, very near end] & C_2 \arrow[d] \\
C_1 \arrow[r]                                                         & C_1 +_B C_2  
\end{tikzcd}\]
Then $ n_1 \vee n_2$ is really the join in the poset $ \mathcal{D}_B$ as for any $n : B \rightarrow C$, if $ n_1, \, n_2 \leq C$, then there are $ m_1, \,m_2$ such that $m_1n_1 = n =m_2 n_2$, so that $m_1$ and $m_2$ produce a commutative square, hence a map $ \langle m_1, m_2 \rangle : C_1 +_B C_2 \rightarrow n$ attesting that $ n_1 \vee n_2 \leq n$, and conversely. In particular, the opposite poset $ \mathcal{D}_B^{op}$ is a $\wedge$-semilattice with top element. 
\end{remark}

\begin{definition}
For each $ n \in \mathcal{D}_B$, define the set 
\[ X_n = \{ x \in X_B \mid \, n \leq x \} \]
This definition extends naturally for arbitrary diagonally universal morphisms. Conversely, for each $x \in X_B$, define the set 
\[ V_x = \{ n \in \mathcal{D}_B \mid \, n \leq x \} \]
\end{definition}

\begin{remark}
Then in a Diers context, any candidate $x$ decomposes as  colimit in the category $ B \downarrow \mathcal{B}:$
\[ B \stackrel{x}{\rightarrow} U(A_x) = \underset{n \in V_x}{\colim} \; B \stackrel{n}{\rightarrow} C   \]
Then from expression of filtered colimits in the coslice, we have that 
\[ U(A_x) \simeq \underset{n \in V_x}{\colim} \; \cod(n) \]
\end{remark}

We also list here some obvious, yet meaningful properties of $D$:

\begin{proposition}
We have the following, for any object $B$ of $\mathcal{B}$:\begin{itemize}
    \item If $ n_1 \leq n_2$ in $\mathcal{D}_B$, then $D_{n_2} \subseteq D_{n_1}$
    \item If $ x_1 \leq x_2$ in $X_B$, then $ V_{x_1} \subseteq V_{x_2}$
    \item $ D(1_B) = X_B$
    \item for $ n_1, \, n_2 \in \mathcal{D}_n$, we have $ D(n_1) \cap D(n_2) = D(n_1 \vee n_2)$ 
\end{itemize}
\end{proposition}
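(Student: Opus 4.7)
The plan is to treat all four items as direct consequences of the definition of the factorization order, together with the universal property of the pushout giving joins in $\mathcal{D}_B$. None of the statements require new constructions; they only package together transitivity and the bottom/join structure of the posets $\mathcal{D}_B$ and $X_B$ already described.

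For items (1) and (2), I would simply chase factorizations. If $n_1 \leq n_2$ via some $n : C_1 \to C_2$ and $x \in D_{n_2}$ via some $m : C_2 \to U(A_x)$, then the composite $mn : C_1 \to U(A_x)$ witnesses $n_1 \leq x$, giving $x \in D_{n_1}$. Dually, if $x_1 \leq x_2$ via some $k : U(A_{x_1}) \to U(A_{x_2})$ and $n \in V_{x_1}$ via $\ell : C \to U(A_{x_1})$, then $k\ell$ witnesses $n \leq x_2$, so $n \in V_{x_2}$. For item (3), I would observe that the identity $1_B$ is the bottom element of $\mathcal{D}_B$ extended to $X_B$: any local unit $x : B \to U(A_x)$ factors through $1_B$ by $x$ itself, hence $1_B \leq x$, and so $D(1_B) = X_B$.

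The only item carrying some content is (4). The inclusion $D(n_1 \vee n_2) \subseteq D(n_1) \cap D(n_2)$ is immediate from (1) applied to the comparisons $n_1 \leq n_1 \vee n_2$ and $n_2 \leq n_1 \vee n_2$ coming from the pushout square defining the join. For the reverse inclusion, I would take $x \in D(n_1) \cap D(n_2)$ with witnessing factorizations $m_i : C_i \to U(A_x)$ satisfying $m_i n_i = x$ for $i = 1,2$; these agree on $B$, so by the universal property of the pushout $C_1 +_B C_2$ they induce a unique map $\langle m_1, m_2 \rangle : C_1 +_B C_2 \to U(A_x)$ whose precomposition with $n_1 \vee n_2$ is $x$, yielding $n_1 \vee n_2 \leq x$, i.e. $x \in D(n_1 \vee n_2)$.

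I do not expect any genuine obstacle; the main thing to be careful about is that all of these order-theoretic manipulations legitimately extend from $\mathcal{D}_B$ to its order-theoretic extension on $X_B$, which has already been set up in the preceding remark (in particular that comparison maps between codomains of diagonally universal morphisms are themselves diagonally universal, by left cancellation, so the join construction still takes place within the class where the universal property is applicable).
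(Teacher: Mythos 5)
Your proof is correct and is exactly the intended argument: the paper omits a proof entirely, calling these properties ``obvious,'' and your verification of items (1)--(3) by composing factorization witnesses is the expected routine check. For item (4), your argument is precisely the paper's earlier remark establishing that $n_1 \vee n_2$ is the join in $\mathcal{D}_B$ (the cocone $\langle m_1, m_2\rangle$ out of the pushout), specialized to the case where the comparison object is a local unit $x$, so nothing is missing.
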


\begin{remark}
The fourth item says in particular that $ \{ D_n \mid \, n \in \mathcal{D}_B \} $ is a basis for a topology as it is closed under intersection. 
\end{remark}

\begin{definition}\label{spectraltopo}
The \emph{spectral topology} of $B$ is the topology on the set $ X_B$ generated as
\[ \tau_B = \langle \{ D_n \mid \, n \in \mathcal{D}_n \}\rangle  \]
In particular we have a monotone, $\wedge$-preserving map $ D : \mathcal{D}_B^{op} \rightarrow \tau_B$.
\end{definition}

\begin{remark}
One could also process in a point-free way as follows: from the poset $ \mathcal{D}_B$, generated the free frame $ \mathcal{F}_B = \langle \mathcal{D}_B^{op} \rangle_{\mathcal{F}rm}$, then equip it with the cover $J_B$ defined as  
\[ J_B(u) = \{ (n_i)_{i \in I} \mid \, \bigcup_{i \in I} D_{n_i} = u \} \]
Beware however that, without some specific hypothesis, the spectrum $ (X_B, \tau_B)$ may not be $T_0$-separated and hence not sober, so that $X_B$ may not coincide with $ pt(\mathcal{D}_B, J_B)$. Conversely, the frame $ \mathcal{F}_B$ may not be spatial if the map $ D$ fails to be injective, or to be order reflecting.
\end{remark}

\begin{remark}
Beware that, for an arbitrary Diers context, the category of diagonally universal morphisms may not be locally finitely presentable, and the orthogonality structure generated as in \cite{partI} may not be left generated. In this case some diagonally universal morphisms may not be obtained as filtered colimit of finitely presented one: then the inclusion
\[ \Ind(^\perp U(\overrightarrow{\mathcal{A}}) \cap \overrightarrow{\mathcal{B}_{fp}}) \subseteq \, ^\perp\! U(\overrightarrow{\mathcal{A}})  \]
is strict, and the factorization system induced by the small object argument as in \cite{Anel} returns a wider class on the right 
\[ (^\perp U(\overrightarrow{\mathcal{A}}))^\perp \subseteq (^\perp U(\overrightarrow{\mathcal{A}}) \cap \overrightarrow{\mathcal{B}_{fp}})^\perp \]
So in the general case, we will have at some point to distinguish between arbitrary diagonally universal morphisms and the one that are obtained as filtered colimits of basic ones. We call morphisms in $ \Ind(^\perp U(\overrightarrow{\mathcal{A}}) \cap \overrightarrow{\mathcal{B}_{fp}})$ \emph{ axiomatisable diagonally universal morphisms}, because we saw in \cite{partI}[section 3] they are models of a finite-limit theory. We also consider $ \Ind(\mathcal{D}_B)$, {which are the axiomatisable diagonally universal morphisms under $B$.}\\

We recall that we said that a Diers context $U$ to \emph{diagonally axiomatisable} when one, hence both, of the two inclusion above are equalities. Then in this case, the category $\mathcal{D}iag =\Ind(^\perp U(\overrightarrow{\mathcal{A}}) \cap \overrightarrow{\mathcal{B}_{fp}}) $ of diagonally universal morphisms is locally finitely presented, and for each $B$ in $\mathcal{B}$, so is the category $ \mathcal{D}iag_B$ of diagonally universal morphisms under $B$, in this case we do have $ \mathcal{D}iag_B= \Ind(\mathcal{D}_B)$. 
\end{remark}

Now observe that for each $B$, we can extend $ D$ to axiomatisable diagonally universal morphisms by computing the pointwise left Kan extension of the functor $D$
\[ 
\begin{tikzcd}
\mathcal{D}_B \arrow[r, "D"] \arrow[d, "\iota"', hook] & \tau_B^{op} \\
\Ind(\mathcal{D}_B) \arrow[ru, "{\lan_\iota \, D}"', " \simeq"{inner sep=5pt}]   &            
\end{tikzcd}\]
where the canonical 2-cell is invertible by full faithfulness of the dense inclusion. This extension expresses as a codirected intersection
\[ \lan_\iota \, D (l) = \bigcap_{ \mathcal{D}_B\downarrow l} D_n \]
ensuring that the canonical cocone of $l$ in $\Ind(\mathcal{D}_B)$ is sent to a canonical intersection of basic neighborhoods. In the following, we also abusively denote $\lan_\iota \, D (l)$ as $D_l$. The fact that the intersection range over the canonical cone do not modify the intersection as intersections are idempotent, so we could also rewrite this identity as 
\[ D_l = \bigcap_{n \leq l \atop n \in \mathcal{D}_B} D_n  \]
that is, by considering only the order on diagonally universal morphisms rather than their categorical structure. \\

\begin{remark}
 The equality above can be understood as follows: suppose that $x \in \bigcap_{m :n \rightarrow l \atop n \in \mathcal{D}_B} D_n  $, that is, that for each $m : n \leq l$ we have a $ q_m : n \rightarrow x$; then as $ l = \colim_{m \in \mathcal{D}_B\downarrow l} \dom(m)$, we have a canonical morphism $ \langle q_m \rangle_{ \mathcal{D}_B\downarrow l} : l \rightarrow x$ ensuring that $l \leq x$, that is, $x \in D_l$. One has to compose any $l \rightarrow x$ with the canonical cone of $f$ to see the converse inclusion.
\end{remark}

\begin{remark}
The induced specialization order on $X_B$ is exactly the restriction of the order of factorization $ \leq $ on the local units because if $x_1 \leq x_2 $ in $X_B$, then for any $n \in \mathcal{D}_n$, $ x_1 \in D_n$ implies $ x_2 \in D_n$, that is, $V_{x_1} \subseteq V_{x_2}$, and as the $D_n $ are a basis for the topology, so the same is true for any open. Remark we can also restrict $D$ at any $V_x$. 
\end{remark}

\begin{remark}
Observe that one could extends the functor $ D$ to arbitrary diagonally universal morphisms under $B$. In the case where they are left generated, then they are filtered colimits of diagonally universal morphisms of $B$.

\end{remark}

\begin{definition}\label{spectrum}
For a Diers context $U : \mathcal{A} \rightarrow \mathcal{B}$, any object $B$ of $\mathcal{B}$, define the \emph{Spectrum of $B$} as the space $ \Spec(B)= (X_B, \tau_B)$. We call also this space the \emph{Diers space of $B$}. 
\end{definition}

Now let us explicit the functorial aspects of this construction. Let $ f : B_1 \rightarrow B_2$ in $\mathcal{B}$. Then for any point $ x : B_2  \rightarrow U(A)$ the factorization of the composite

\[\begin{tikzcd}
B_1 \arrow[]{r}{f} \arrow[]{rd}[swap]{\eta^A_{xf}} & B_2 \arrow[]{r}{x} & U(A) \\ & UL_A(xf) \arrow[]{ru}[swap]{U(L_A(xf)} &
\end{tikzcd}\]
defines uniquely a point $ \eta_{xf} : B_1 \rightarrow UL_A(nf)$ of $X_{B_1}$ and we have to pose
\[\begin{array}{rrcl}
    \Spec(f) : & X_{B_2} & \rightarrow  & X_{B_1} \\
     &  B_2  \stackrel{x}{\rightarrow} U(A) & \longmapsto &  B_1 \stackrel{\eta_{xf}}{\rightarrow} U(L_A(xf))
\end{array}\]

\begin{proposition}
For any $ f : B_1 \rightarrow B_2 $ in $\mathcal{B}$ and any $n$ in $\mathcal{D}_{B_1}$, we have $ \Spec(f)^{-1}(D^{B_1}_n) = D^{B_2}_{f_*n}$, that is, $\Spec(f)$ restricts to the basis of the topology and we have 
\[ 
\begin{tikzcd}[column sep=large]
\mathcal{D}_{B_1}^{op} \arrow[r, "(f_*)^{op}"] \arrow[d, "D^{B_1}"']       & \mathcal{D}_{B_2}^{op} \arrow[d, "D^{B_2}"] \\
\tau_{B_1} \arrow[r, "\Spec(f)^{-1}"'] & \tau_{B_2}             
\end{tikzcd} \]
In particular, the map $ \Spec(f) : \Spec(B_2) \rightarrow \Spec(B_1)$ is continuous.
\end{proposition}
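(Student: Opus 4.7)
The plan is to unfold both sides of the claimed equality $\Spec(f)^{-1}(D^{B_1}_n) = D^{B_2}_{f_*n}$ in terms of the factorization order, and verify the two inclusions separately, one using the left orthogonality of diagonally universal morphisms against morphisms in the image of $U$, the other using the universal property of the pushout defining $f_*n$.

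Fix $x : B_2 \to U(A)$ in $X_{B_2}$ and let $\eta_{xf} : B_1 \to UL_A(xf)$ be the local unit obtained from the factorization of $xf$, so that $xf = U(g)\,\eta_{xf}$ for the associated $g : L_A(xf) \to A$ in $\mathcal{A}$. Membership $\Spec(f)(x) \in D^{B_1}_n$ means $n \leq \eta_{xf}$ in $\mathcal{D}_{B_1}$, i.e. there is some $h : \cod(n) \to UL_A(xf)$ with $h\,n = \eta_{xf}$. Membership $x \in D^{B_2}_{f_*n}$ means that $x$ factors as $k \circ f_*n$ for some $k : C +_{B_1} B_2 \to U(A)$, writing the pushout square with projections $n' : C \to C +_{B_1} B_2$ and $f_*n : B_2 \to C +_{B_1} B_2$.

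For the inclusion $\Spec(f)^{-1}(D^{B_1}_n) \subseteq D^{B_2}_{f_*n}$, assume $n \leq \eta_{xf}$ via $h$. Then the outer square
\[
\begin{tikzcd}
B_1 \arrow[d, "f"'] \arrow[r, "n"] & \cod(n) \arrow[d, "U(g)\,h"] \\
B_2 \arrow[r, "x"'] & U(A)
\end{tikzcd}
\]
commutes, since $U(g)\,h\,n = U(g)\,\eta_{xf} = xf$. The universal property of the pushout yields a unique $k : C +_{B_1} B_2 \to U(A)$ with $k \circ f_*n = x$, so $f_*n \leq x$ and thus $x \in D^{B_2}_{f_*n}$. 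Conversely, assume $x = k \circ f_*n$ for some $k$. Setting $m = k \circ n' : \cod(n) \to U(A)$, the pushout identity gives $m\,n = k\,f_*n\,f = xf = U(g)\,\eta_{xf}$, so the square
\[
\begin{tikzcd}
B_1 \arrow[d, "\eta_{xf}"'] \arrow[r, "n"] & \cod(n) \arrow[d, "m"] \\
UL_A(xf) \arrow[r, "U(g)"'] & U(A)
\end{tikzcd}
\]
commutes. Since $n$ is diagonally universal and $U(g)$ lies in the image of $U$, by left orthogonality there exists a (unique) diagonal $h : \cod(n) \to UL_A(xf)$ with $h\,n = \eta_{xf}$, witnessing $n \leq \eta_{xf}$ and hence $\Spec(f)(x) \in D^{B_1}_n$.

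This proves the set-theoretic identity, hence the commutativity of the square relating $D^{B_1}$, $D^{B_2}$, $(f_*)^{op}$ and $\Spec(f)^{-1}$. Continuity of $\Spec(f)$ follows immediately because $\{D^{B_1}_n \mid n \in \mathcal{D}_{B_1}\}$ is a basis of $\tau_{B_1}$ by \cref{spectraltopo}, and we have shown each basic open pulls back to a basic open of $\tau_{B_2}$. The main subtlety is keeping track of which universal property is invoked on each side: the inclusion using the pushout universal property is essentially formal, while the reverse inclusion is where the diagonal universality of $n$ is genuinely needed to lift along the $U$-image $U(g)$ arising from the local-unit factorization of $xf$.
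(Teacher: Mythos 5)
Your proof is correct and takes the same route as the paper: the paper's entire argument is the one-line assertion that $n \leq \eta_{xf}$ if and only if $f_*n \leq x$, followed by the observation that checking basic opens suffices for continuity. You have simply supplied the details of that equivalence — the pushout universal property for one direction and left orthogonality of $n$ against $U(g)$ for the other — exactly as the paper implicitly intends.
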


\begin{proof}
Now for a diagonally universal morphism of finite presentation $ n \in \mathcal{D}_{B_1} $ one has $ n \leq \eta_{xf} $ if and only if $ f_* n \leq x$. That is, $\Spec(f)(x) \in D^{B_1}_n$ if $ x \in D^{B_2}_{f_*n}$. This proves that $ \Spec(f) $ restricts to the basis as stated above. But testing that inverse image of basic opens are open is sufficient for continuity.
\end{proof}


\begin{definition}
The spaces of the form $ (X_B, \tau_B)$ for $B$ in $ \mathcal{B}$ are collectively called \emph{$\emph{U}$-spectral spaces} or \emph{ the Diers spaces of U}, while the maps of the form $\Spec(f)$ are called \emph{$U$-spectral maps}.  
\end{definition}

The power of Diers approach resides also in the way one can characterize topological properties of the Diers spaces from purely functorial properties of $U$. We list here the following characterizations from \cite{Diers}[Section 7].

\begin{proposition}
Let $U : \mathcal{A} \rightarrow \mathcal{B}$ be a Diers context. Then: \begin{enumerate}
    \item Diers spaces are $T_0$ iff $U$ is conservative
    \item Diers spaces are $T_1$ iff $U$ is full and faithful
    \item Diers spaces are $T_2$ iff they are zero-dimensional iff $U$ is full and faithful and moreover any object $B$ such that $ X_B = \emptyset$ is a finitely presented object in $\mathcal{B}$
    \item Diers spaces are compact iff $U$ lifts ultraproducts
    \item Diers spaces are boolean iff $U$ is full and faithful and lifts ultraproducts
\end{enumerate}
\end{proposition}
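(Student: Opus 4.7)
The plan is to treat each of the five characterizations separately, using as common tool the facts that the topology on $\Spec(B)$ is generated by the basic opens $\{D_n\}_{n \in \mathcal{D}_B}$ and that its induced specialization preorder coincides with the factorization preorder on $X_B$, as observed in the remark preceding this proposition. In each case I would translate the topological property into a statement about local units and their factorization behaviour, and then pull it back to a categorical condition on $U$.

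For items (1) and (2), the focus is on the factorization preorder on $X_B$. The space is $T_0$ iff this preorder is antisymmetric; given $x_1, x_2$ with $x_1 \leq x_2 \leq x_1$ witnessed by diagonally universal morphisms $n_1, n_2$ in opposite directions between $U(A_1)$ and $U(A_2)$, uniqueness in the local-unit factorization of the $x_i$ forces the composites $n_1 n_2$ and $n_2 n_1$ to lie in the image of $U$ and to equal identities there. The key step is then to argue that this forces $A_1 \cong A_2$ over $B$ precisely when $U$ reflects isomorphisms, i.e.\ is conservative. For (2), $T_1$-ness amounts to triviality of the order: every local unit should be maximal, which I expect to be equivalent to the statement that every $\mathcal{B}$-morphism $B \to U(A)$ factors as $U$ of a morphism in $\mathcal{A}$ in a unique way, i.e.\ that $U$ is fully faithful.

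Item (3) combines (2) with a further requirement. Under $T_1$, the complement of a basic $D_n$ must itself be open for the topology to be zero-dimensional; examining this through the basis and using that closed complements of $D_n$ correspond to subspaces indexed by local units not factoring through $n$, the closedness reduces to a finite-presentability condition on objects of empty spectrum in $\mathcal{B}$. For (4) I would pass to the ultrafilter criterion: an ultrafilter $\mathcal{U}$ on $X_B$ induces, via its trace on the basis $\mathcal{D}_B$, a filtered family of diagonally universal morphisms whose colimit gives a candidate local unit, which can be identified with an ultraproduct of the local objects $A_x$; the existence of a corresponding limit point in $X_B$ amounts precisely to saying that this ultraproduct lifts through $U$. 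Item (5) then follows by combining (3) and (4), since a compact Hausdorff space is Boolean iff it is zero-dimensional.

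The main technical obstacle I anticipate is in items (1) and (2): one has to show that the comparison morphisms $n_i$ between codomains of local units can be recognized as images under $U$ of morphisms in $\mathcal{A}$, so that conservativity or full faithfulness of $U$ can actually be brought to bear. This is really the crux of the passage between the orthogonality structure encoded in the diagonally universal morphisms of $\mathcal{B}$ and the intrinsic categorical properties of $U$, and the argument will likely depend critically on uniqueness in the multi-adjoint factorization combined with the left-cancellation property of diagonally universal maps noted earlier in the text.
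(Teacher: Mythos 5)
First, a point of comparison: the paper does not prove this proposition at all --- it is explicitly quoted from Diers \cite{Diers}[Section 7] without argument, so there is no in-text proof to measure your attempt against. Your proposal therefore has to stand on its own, and as it stands it is an outline rather than a proof. The one thing it gets solidly right is the starting point: the identification of the specialization preorder on $X_B$ with the factorization order (which, note, already uses the third Diers axiom to pass from $V_{x_1}\subseteq V_{x_2}$ to an actual comparison morphism $U(A_1)\to U(A_2)$ under $B$), and the reduction of $T_0$, $T_1$ and the Boolean condition to order-theoretic statements.

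The genuine gap is exactly the step you flag yourself in items (1) and (2), and flagging it does not discharge it. You assert that uniqueness in the multi-versal factorization ``forces the composites $n_1 n_2$ and $n_2 n_1$ to lie in the image of $U$.'' It cannot: the uniqueness clause of a right multi-adjoint only compares two factorizations $U(a)\circ x = U(a')\circ x'$ that are \emph{already} given through morphisms in the image of $U$; it gives no purchase whatsoever on an arbitrary $\mathcal{B}$-morphism $n\colon U(A_1)\to U(A_2)$ with $n x_1 = x_2$, and the paper's own remark only tells you such an $n$ is diagonally universal, not that it descends to $\mathcal{A}$. What is actually available from the Diers axiom is that $V_{x_1}=V_{x_2}$ exhibits $x_1$ and $x_2$ as colimits of the \emph{same} filtered diagram in $B\downarrow\mathcal{B}$, hence isomorphic there; the whole content of item (1) is then to show that conservativity of $U$ is exactly what lets this isomorphism of codomains be recognized as (the image of) an isomorphism $A_1\cong A_2$ identifying the two local units, and conversely that a failure of conservativity produces two distinct local units with the same neighbourhood filter. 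Your sketch never supplies this bridge, and the same missing bridge undermines the forward direction of (2) (you prove nothing in the direction ``$T_1$ implies $U$ full and faithful''). Items (3) and (4) are likewise programmatic: ``reduces to a finite-presentability condition'' and ``can be identified with an ultraproduct'' are restatements of the conclusion rather than arguments, and ``lifts ultraproducts'' is not even defined in the paper, so the convergence-of-ultrafilters argument would need that definition spelled out before it could be checked. Only item (5), as a formal consequence of (3) and (4), is genuinely complete modulo the others.
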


\begin{remark}
In several examples of this construction, considering only Diers spaces together with spectral maps between them is sufficient to give rise to a duality, as long as one can characterize their topological structure and how it is preserved by spectral maps. This process is suited in particular for situations where diagonally universal morphisms are quotients in correspondence with some specific kind of congruences or ideals, forming posets in a certain variety, so that the corresponding Diers spaces can be characterized through algebraic properties of the basis of their spectral topology. This is more in the spirit of concrete dualities, though such concrete dualities always seem to correspond to a Diers context. However in this paper, following Diers approach and more generally a vision of spectra more related to algebraic geometry than concrete dualities, the kind of dualities we are going to obtain makes use of geometric information attached to the Diers space to reconstruct algebras, rather than extracting them from information in their topology, as it is done in Grothendieck duality for commutative ring. In particular, the spectrum of an object will not just correspond to a Diers space, but needs also to bear a structural sheaf to achieve the universal property expected from the spectrum. 
\end{remark} 

We have constructed for each object $B $ in $\mathcal{B}$ a space from the data associated to the right multi-adjoint $U$. This space was defined as having as set of points the set indexing its local units under $B$, and as basis for the topology the opposite of the underlying poset of the category of diagonally universal morphisms of finite presentation under $B$. But observe that this construction does not retain enough informations in some sense, as it forgets about the objects it was defined from: in particular, while we record the set indexing the local units (and the specialization order induced by the topology), the precise value of the codomains of the local units and the local units themselves are not remembered. So we need to attach those data to the Diers space of $B$. This is the purpose of the \emph{structural sheaf of $B$}.\\ 

This requires some preliminary remarks on sheaves of objects in a category other than $\Set$, topics for which a standard reference is \cite{KashiwaraSchapira}[section 17.3]. Recall first that a \emph{presheaf} of $\mathcal{B}$-objects on a topological space $(X,\tau)$ is a contravariant functor $\mathbb{B}  :\tau^{op} \rightarrow \mathcal{B}$, where the induced maps $\mathbb{B}_{u_2} \rightarrow \mathbb{B}_{u_1}$ for $ u_1 \subseteq u_2$ are called the \emph{projections} and are denoted $ \rho^{u_1}_{u_2}$; for a point $ x \in X$, the \emph{stalk} at $x$ is the filtered colimit in $\mathcal{B}$ of the values on neighborhoods of $x$
\[ \mathbb{B}\!\mid_x = \underset{x \in u}{\colim} \, \mathbb{B}(u) \]
A \emph{sheaf} of $\mathcal{B}$-objects on $(X,\tau)$ is a presheaf with the following \emph{descent condition}: for any $ u \in \tau$ and $(u_i)_{i \in I}$ such that $ u= \bigcup_{i \in I} u_i$ the projections $ \rho^{u}_{u_i}$ exhibit $\mathbb{B}(u)$ as a limit in $\mathcal{B}$
\[ \mathbb{B}(u) = \lim \Big{ (} \prod_{i \in I} \mathbb{B}(u_i) \rightrightarrows \prod_{i \in I} \mathbb{B}(u_i \cap u_j)   \Big{ )} \]
where the two maps are induced from the restrictions $ \rho^{u_i}_{u_i \cap u_j}$ and $\rho^{u_j}_{u_i \cap u_j}$ respectively. Observe that those definitions require the completeness and cocompleteness of $\mathcal{B}$, which are satisfied in the case where $\mathcal{B}$ is locally finitely presentable as in our hypothesis. While one can define sheaves in categories more general than locally finitely presentable categories, we restrict to the later in our case. \\

Hence for any topological space $(X,\tau)$ we can consider the category $\Sh_\mathcal{B}(X,\tau)$ of \emph{sheaves of $\mathcal{B}$-objects} over $ (X,\tau)$, where morphisms are natural transformations. This is a full subcategory of the category $ [ \tau^{op}, \mathcal{B}]$. Moreover we can define a sheafification functor left adjoint to this inclusion 
\[\begin{tikzcd}
	{\Sh_\mathcal{B}(X,\tau)} && {[\tau^{op}, \mathcal{B}]} && {}
	\arrow[""{name=0, inner sep=0}, from=1-1, to=1-3, curve={height=20pt}, hook]
	\arrow["{\mathfrak{a}_\tau}"{name=1, swap}, from=1-3, to=1-1, curve={height=20pt}]
	\arrow["\dashv"{rotate=-90}, from=1, to=0, phantom]
\end{tikzcd}\]
where we denote  \[ \mathbb{B} \stackrel{\gamma}{\rightarrow} \mathfrak{a}_\tau \mathbb{B} \] the unit of the sheafification. This sheafification functor preserves finite limits and small colimits; it also does not modify stalks, that is, for any presheaf $\mathbb{B}$ and $ x \in X$, $ (\mathfrak{a}_\tau \mathbb{B})\!\mid_x = \mathbb{B}\!\mid_x$. Moreover, recall that limits in category of sheaves are computed pointwisely, as well as filtered colimits - though computation of arbitrary colimits may not be pointwise. \\

Now for a continuous map $ f : (X_1, \tau_1) \rightarrow (X_2, \tau_2)$, as well as inverse and direct images exist for $\Set$ valued sheaves, we can define inverse and direct image for sheaves of $ \mathcal{B}$-objects
\[ 
\begin{tikzcd}
{\Sh_{\mathcal{B}}(X_1,\tau_1)} \arrow[rr, "f_*"', bend right=20] & \perp & {\Sh_{\mathcal{B}}(X_2,\tau_2)} \arrow[ll, "f^*"', bend right=20]
\end{tikzcd} \]
Here $ f_*$ is precomposition with $ f^{-1}: \tau_2 \rightarrow \tau_1 $ so that in each $ u \in \tau_2$, and $ \mathbb{B}$ in $\Sh_{\mathcal{B}}(X_1, \tau_1)$ we have $ f_*\mathbb{B}(u) = \mathbb{B}(f^{-1}(u))$; on the other hand, for each $ \mathbb{B}$ in $\Sh_{\mathcal{B}}(X_2, \tau_2)$, the inverse image $ f^*\mathbb{B} $ is obtained from the left Kan extension   
\[\begin{tikzcd}[column sep=large]
	{\tau_2^{op}} & {\mathcal{B}} \\
	{\tau_1^{op}}
	\arrow["{f^{-1}}"', from=1-1, to=2-1]
	\arrow["{\mathbb{B}}", ""{name=0, inner sep=1pt, below}, from=1-1, to=1-2]
	\arrow["{\lan_{f^{-1}}\mathbb{B}}"', ""{name=1}, from=2-1, to=1-2]
	\arrow[Rightarrow, "{\theta}"'{near end}, from=0, to=1, shorten >=-5pt, curve={height=2pt}]
\end{tikzcd}\]
whose value at $ u \in \tau_1$ is given as the colimit $ f^*\mathbb{B}(u) = \colim_{ u \subseteq f^{-1}(v)} \mathbb{B}(v)$,
followed by sheafification, that is
\[ f^*\mathbb{B}(u) = \mathfrak{a}_{\tau_1}\, \lan_{f^-1}\mathbb{B} \]
Its stalk at $x \in X_1$ is $ \mathbb{B}\!\mid_{f(x)}$.\\

Now we turn to the construction of the structural sheaf on the Diers space $(X_B, \tau_B)$ for an object $B$ in $\mathcal{B}$.

\begin{definition}
For a given $ B \in \mathcal{B}$ the \emph{structural presheaf} is defined as the left Kan extension of the codomain functor along the functor $ D$ : 
\[ \begin{tikzcd}[sep=large]
\mathcal{D}_B \arrow[r, ""{name=A, inner sep=0.1pt, below}, "\cod"] \arrow[d, "D"'] & \mathcal{B} \\ \tau_B^{op} \arrow[ru, ""{name=B, inner sep=0.1pt}, "\overline{B} = \lan_D \cod"'] \arrow[from=A, to=B, Rightarrow, bend right=20, "\zeta"']
\end{tikzcd} \]
That is for any $ u \in \tau_B$ : 
\[  \overline{B}(u) = \underset{ u \subseteq D_n }{\colim}\; \cod(n) \]
and it is equiped with its universal natural transformation $ \zeta : \cod \Rightarrow \overline{B}D$ defined as the collection of maps
\[ \zeta = (\zeta_n : \cod(n) \rightarrow \underset{ D_n \subseteq D_m }{\colim} \cod(m))_{n \in  \mathcal{D}_B }\]
defined as a subcocone of the pointwise colimit expression of the left Kan extension ranging over the elements $D_n = D_n$ amongst all the inequalities $ D_m \supseteq D_n$. 
\end{definition}

\begin{remark}
Beware that in general the natural transformation $\zeta$ may not be a pointwise isomorphism: this will be the case if and only if $ D$ is order-reflecting as a poset map. Then in this case, we can also define a natural transformation in the opposite direction from the fact that $ D_n \subseteq D_m$ iff $n $ factorizes through $ m $, inducing a map from the property of colimits
\[ \begin{tikzcd}
B \arrow[]{r}{n} \arrow[]{rd}[swap]{m} & \cod(n) & \arrow[dashed]{l}[swap]{\exists \xi_n} \underset{m \leq n}{\colim} \, \cod(m) \\ & \cod(m) \arrow[]{u}{} \arrow[]{ru}{} & 
\end{tikzcd} \]
which is natural in $ n$ by universal property of colimits, and this provides a natural inverse to $\zeta$.
\end{remark}

\begin{definition}\label{structuralsheaf}
The structural sheaf for $B$ is the sheafification $\overline{B} \stackrel{\gamma }{\rightarrow}\widetilde{B} = \mathfrak{a}_\tau \overline{B}$, and we have for each $ n : B \rightarrow C$ a morphism in $\mathbb{B}$
\[ C \stackrel{\zeta_n}{\longrightarrow} \overline{B}(D_n) \stackrel{\gamma_n}{\longrightarrow} \widetilde{B}(D_n) \]
and, in particular, a universal map
\[  B \stackrel{\zeta_{1_B}}{\rightarrow} \overline{B}(1_B) \stackrel{\gamma_{1_B}}{\rightarrow} \widetilde{B}(1_B) =\Gamma\widetilde{B} \]
we denote as $ \eta_B : B \rightarrow \Gamma \widetilde{B}$
\end{definition}

Now we come to the local behavior of this structural sheaf by defining separately the restriction of the structural presheaf to the neighborhood. Recall that the upset of $x$ for the specialization order is the saturated compact obtained as

\[ \uparrow_\sqsubseteq x= \bigcap \mathcal{V}_x \]

This subset of $X_B$ is called the \emph{focal component} of $X_B$ at $x$. The restriction codomain of arrows to $V_x$ comes equiped with a colimiting cone with submit $U(A_x)$\[ \phi :  \cod\mid_{V_x} \Rightarrow \Delta_{U(A_x)} \] from the colimit decomposition of Diers condition exhibiting $ U(A_x) = \colim_{n \in V_x} \cod(n)$. Then observe that $D\mid_x : V_x \hookrightarrow \mathcal{V}^{op}_x$ is cofinal as any neighborhood of $x$ contains a basic neighborhood of the form $D_n$ with $n\leq x$.\\

\begin{proposition}
For any $B $ in $\mathcal{B}$ and $ x \in X_B$, $ \widetilde{B}\!\mid_x = U(A_x)$
\end{proposition}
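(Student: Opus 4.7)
The plan is to chain three identifications, all implicit in the preceding discussion: that sheafification leaves stalks unchanged, that $\{D_n\}_{n \in V_x}$ is cofinal in the neighbourhood filter of $x$, and that the Diers axiom gives $U(A_x) = \colim_{n \in V_x} \cod(n)$.

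First, recall that $\mathfrak{a}_{\tau_B}$ does not modify stalks, so it suffices to show $\overline{B}\!\mid_x \simeq U(A_x)$. Using the cofinality of $D\!\mid_x : V_x \hookrightarrow \mathcal{V}_x^{op}$ noted just before the statement, one rewrites
\[ \overline{B}\!\mid_x \;=\; \underset{u \in \mathcal{V}_x}{\colim}\; \overline{B}(u) \;\simeq\; \underset{n \in V_x}{\colim}\; \overline{B}(D_n). \]
Plugging in the pointwise Kan extension formula $\overline{B}(D_n) = \colim_{m \leq n} \cod(m)$ turns this into an iterated filtered colimit indexed by the poset of pairs $(m,n) \in \mathcal{D}_B \times \mathcal{D}_B$ with $m \leq n \leq x$, whose value at $(m,n)$ is $\cod(m)$.

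The core step is then a cofinality collapse of this double colimit to the single colimit over $V_x$. I would check that the diagonal $\Delta : V_x \to \{(m,n) \mid m \leq n \leq x\}$, $m \mapsto (m,m)$, is final: for any target $(m_0, n_0)$, the join $m_0 \vee n_0$ lies in $V_x$ (since $V_x$ is closed under the finite joins of $\mathcal{D}_B$, both $m_0$ and $n_0$ being below $x$), which supplies a morphism $(m_0,n_0) \to \Delta(m_0 \vee n_0)$; the slice $(m_0,n_0) \downarrow \Delta$ is then the principal upper set of $m_0 \vee n_0$ in $V_x$, hence filtered, so non-empty and connected. Since $\cod \circ \Delta = \cod\!\mid_{V_x}$, finality yields $\overline{B}\!\mid_x \simeq \colim_{n \in V_x} \cod(n)$.

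The last step is the Diers axiom itself: the cone $\phi : \cod\!\mid_{V_x} \Rightarrow \Delta_{U(A_x)}$ recalled in the paragraph above is precisely the colimit exhibition $U(A_x) = \colim_{n \in V_x} \cod(n)$, which closes the chain. The main obstacle is the cofinality of $\Delta$, but it reduces entirely to the fact that $V_x$ inherits the finite joins of $\mathcal{D}_B$; all remaining steps are bookkeeping of recalled material.
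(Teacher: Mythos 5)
Your overall architecture is the same as the paper's (sheafification preserves stalks; basic opens $D_n$ with $n\in V_x$ are cofinal among neighbourhoods of $x$; collapse the resulting double colimit by a cofinality argument; finish with the Diers condition), and you supply more detail than the paper does on the collapse step. But there is one concrete misstep: the pointwise Kan extension formula is \emph{not} $\overline{B}(D_n)=\colim_{m\leq n}\cod(m)$. The left Kan extension along $D$ is indexed by the comma category, i.e. $\overline{B}(D_n)=\colim_{D_n\subseteq D_m}\cod(m)$, and the inclusion $\{m\mid m\leq n\}\hookrightarrow\{m\mid D_n\subseteq D_m\}$ is final only when $D$ is order-reflecting --- a hypothesis the paper explicitly warns may fail (it is exactly the condition for $\zeta_n$ to be invertible). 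Indeed, since $n$ is terminal in $\{m\mid m\leq n\}$, your formula would say $\overline{B}(D_n)\simeq\cod(n)$ outright, i.e. that $\zeta$ is always a pointwise isomorphism; if that were so your entire double-colimit apparatus would be unnecessary. As written, step 3 of your chain is false in general, so the proof does not go through.

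The repair is short and leaves your finality argument essentially intact: keep the correct index $\{(n,m)\mid n\leq x,\ D_n\subseteq D_m\}$, and observe that for any such pair one has $x\in D_n\subseteq D_m$, hence $m\leq x$, so both coordinates land in $V_x$. Your diagonal $\Delta:V_x\to\{(n,m)\}$, $k\mapsto(k,k)$, is then still final by exactly your join argument: $n\vee m\in V_x$ receives a map from $(n,m)$, and the comma is an upper set of the join-semilattice $V_x$, hence directed and connected. The value along the diagonal is $\cod(k)$, and the Diers condition closes the computation. This is precisely the cofinality the paper invokes in one sentence; the only thing missing from your write-up is the observation that $D_n\subseteq D_m$ and $x\in D_n$ force $m\leq x$, which is what lets you avoid any order-reflection hypothesis on $D$.
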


\begin{proof}
This comes from the expression of Kan extension as colimit and the condition in the Dier context that candidate are obtained as filtered colimits of morphism of finite presentation. First recall that sheafification does not modify the stalks, so that $ \widetilde{B}\!\mid_x = \overline{B}\!\mid_x$. Now we have 
\[ \overline{B}\!\mid_x = \underset{n \leq x}{\colim} \, \overline{B}(D_n) = \underset{n \leq x}{\colim} \, \underset{D_n \subseteq D_m}{\colim} \cod(m) \]
But the diagram made of all the $ n \leq x$ is cofinal in the indexing diagram of this colimit, and then it coincide with the colimit $ \colim_{n \leq x} \cod(n)$ which is $U(A_x)$ by Diers condition. 
\end{proof}

Now, for a morphism $ f: B_1 \rightarrow B_2$ in $\mathcal{B}$, we construct a morphism of sheaf between the structural sheaves associated to $B_1$ and $B_2$. For any $n$ in $\mathcal{D}_{B_1}$ we have a map $ \gamma_{D_n} \zeta_{n}$ producing a composite map $ \sigma_n$ as below
\[\begin{tikzcd}
	{B_1} & {B_2} \\
	{C} & {f_*C} & {\widetilde{B}_2(f^{-1}(D_n))}
	\arrow["{n}"', from=1-1, to=2-1]
	\arrow["{f}", from=1-1, to=1-2]
	\arrow["{\rho^{X_{B_2}}_{f^{-1}(D_n)} \eta_{B_2}}", from=1-2, to=2-3]
	\arrow["{f_*n}" description, from=1-2, to=2-2]
	\arrow[from=2-1, to=2-2]
	\arrow["{\gamma_{D_n}\zeta_n}", from=2-2, to=2-3]
	\arrow["{\sigma_n}"', from=2-1, to=2-3, curve={height=18pt}]
\end{tikzcd}\] 
and the data of all the $(\sigma_n : \cod(n) \rightarrow \widetilde{B}_2(X_f^{-1}(D_n))_{n \in \mathcal{D}_{B_1}}$ can be shown to provide a natural transformation $ \cod \Rightarrow {X_f}_*\widetilde{B}_2 $, so the left Kan extension produces a factorization
\[\begin{tikzcd}[column sep=large]
	{\mathcal{D}_{B_1}} & {} & {\mathcal{B}} \\
	{\tau_{B_1}^{op}} & {} \\
	{\tau_{B_2}^{op}}
	\arrow["{D}"', from=1-1, to=2-1]
	\arrow["{X_f^{-1}}"', from=2-1, to=3-1]
	\arrow["{\cod}"{name=0}, from=1-1, to=1-3]
	\arrow["{\widetilde{B}_2}"{name=1, swap}, from=3-1, to=1-3, curve={height=18pt}]
	\arrow["{\overline{B}_1}"{name=2, description}, from=2-1, to=1-3]
	\arrow[Rightarrow, "{\sigma}"', from=0, to=1, shift right=9, curve={height=6pt}, shorten <=3pt, shorten >=3pt, crossing over]
	\arrow[Rightarrow, "{\zeta}"{name=3, swap}, from=0, to=2, shorten <=2pt, shorten >=2pt]
	\arrow[Rightarrow, "{\overline{f}}", from=2, to=1, shorten <=3pt, shorten >=3pt, dashed]
\end{tikzcd}\]
and as the direct image $ {X_f}_*\widetilde{B}_2$ is still a sheaf, this induced map $ \overline{f}$ itself factorizes through the sheafification as
\[\begin{tikzcd}
	{\overline{B}_1} & {{X_f}_*\widetilde{B}_2} \\
	{\widetilde{B}_1}
	\arrow["{\overline{f}}", from=1-1, to=1-2]
	\arrow["{\gamma}"', from=1-1, to=2-1]
	\arrow["{\widetilde{f}^\sharp}"', from=2-1, to=1-2]
\end{tikzcd}\]
This morphism $ \widetilde{f}^\sharp $ now corresponds itself to a morphism of sheaf $ \widetilde{f}^\flat : {X_f}^*\widetilde{B}_1 \rightarrow \widetilde{B}_2$. \\ 

To turn this construction into a functor, we need first to determine where it would land. We constructed a space equiped with a distinguished structural sheaf of $\mathcal{B}$-objects with stalks in the range of $U$. 
\begin{definition}
Define the category $U-Spaces$ of \emph{$ U$-spaces} as the category whose\begin{itemize}
    \item objects consist of triples $ ((X,\tau), \mathbb{A}, (A_x)_{x \in X})$ where $ (X,\tau)$ is a topological space, $ \mathbb{A}$ is a sheaf of $\mathcal{B}$-objects over $(X,\tau)$, and $ (A_x)_{x \in X}$ is a family of objects in $\mathcal{A}$ such that the stalks of $\mathbb{A}$ satisfy $ \mathbb{A}\!\mid_x \simeq U(A_x)$ for any $x \in X$ 
    \item morphisms $((X_1,\tau_1), \mathbb{A}_1, (A^1_x)_{x \in X_1}) \rightarrow ((X_2,\tau_2), \mathbb{A}_2, (A^2_x)_{x \in X_2})$ consist of triples $(f, \phi, (u_x)_{x \in X_1})$ where $ f :(X_2,\tau_2) \rightarrow (X_1,\tau_1)$ is continuous, $\phi$ consists of a pair of morphisms of sheaves 
    \[ ( \phi^\flat : f^*\mathbb{A}_1 \rightarrow \mathbb{A}_2, \, \phi^\sharp : \mathbb{A}_1 \rightarrow f_*\mathbb{A}_2) \]
    corresponding through the adjunction $ f^* \dashv f_*$ and $ u_x : A^1_{f(x)} \rightarrow A_x^2$ is an arrow in $\mathcal{A}$ such we have at stalks $\phi^\flat_x = U(u_x) $ for each $ x \in X_1$. The morphisms $ (\phi^\flat, \phi^\sharp)$ are called the \emph{inverse and direct comorphism part} of the morphism of $U$-spaces. 
\end{itemize}
\end{definition}

\begin{remark}
In this definition, $U$-spaces are not just spaces with sheaves that have their stalk in the essential image of $U$: in fact we attach to them a specification of which objects of $\mathcal{A}$ their stalks come from. Similarly for morphisms where we impose that their inverse image part comes from an arrow in $\mathcal{A}$.
\end{remark}

\begin{remark}
For any morphism of $U$-spaces $ (f, \phi)$, the inverse image $ f^*\mathbb{A}$ still has its stalks in $ \mathcal{A}$ as for any point in $X_2$, we have $ f^*\mathbb{A}_1\!\mid_x = \mathbb{A}_1\!\mid_{f(x)}$. Moreover it is a standard result that inverse image preserves finite limits. However we cannot control the stalks of the direct image $ f_*\mathbb{A}_2$, which may not be in the range of $U$. This is related to the fact that, while sheaves $\mathcal{B}$-objects, as object of a locally presentable categories, are stable under inverse and direct image, objects of $\mathcal{A}$ may be in a more wild class of objects, and in general sheaves of objects in non locally finitely presentable categories are not anymore stable under direct image. This is typically true when objects in $\mathcal{A}$ are model of a geometric sketch with non trivial inductive part that cannot be preserved by direct image for they are right adjoints and hence need not preserve colimits. 
\end{remark}

\begin{remark}
In a morphism of $U$-spaces $((X_1,\tau_1), \mathbb{A}_1, (A^1_x)_{x \in X_1}) \rightarrow ((X_2,\tau_2), \mathbb{A}_2, (A^2_x)_{x \in X_2})$, the inverse and direct image parts are related as follows. For any $x \in X_2 $ and $ u \in \tau_1 $ such that $ f(x) \in u$, that is, $x \in f^{-1}(u)$, then we have $ f_*\mathbb{A}_2(u) = \mathbb{A}(f^{-1}(u))$, and we have the composite map  
\[\begin{tikzcd}
	{\mathbb{A}_1(u)} & {\mathbb{A}_2(f^{-1}(u))} \\
	& {\mathbb{A}_2\!\mid_x}
	\arrow["{\phi^\sharp_u}", from=1-1, to=1-2]
	\arrow["{\rho^{f^{-1}(u)}_x}", from=1-2, to=2-2]
	\arrow[from=1-1, to=2-2]
\end{tikzcd}\]
But now the stalk of $\mathbb{A}_1$ at $ f(x)$ is the filtered colimit $ \mathbb{A}_1\! \mid_{f(x)} = \colim_{f(x) \in u} \mathbb{A}_1(u)$, so the cone made of all the maps above factorizes uniquely through this colimits, and this produces the desired value of the inverse image part of $\phi$ at $x$, so that we have a commutation
\[\begin{tikzcd}
	{\mathbb{A}_1(u)} & {\mathbb{A}_2(f^{-1}(u))} \\
	{\mathbb{A}_1\!\mid_{f(x)}} & {\mathbb{A}_2\!\mid_x}
	\arrow["{\phi^\sharp_u}", from=1-1, to=1-2]
	\arrow["{\rho^{f^{-1}(u)}_x}", from=1-2, to=2-2]
	\arrow["{\rho^{u}_{f(x)}}"', from=1-1, to=2-1]
	\arrow["{\phi^\flat_x}"', from=2-1, to=2-2]
\end{tikzcd}\]
where the inverse image part is exhibited as the universal map \[ \phi^\flat_x = \langle \rho^{f^{-1}(u)}_x \phi^\sharp_u \rangle_{f(x) \in u} \]
\end{remark}

To sum up, in the first part of this section, we defined for each object $B $ in $\mathcal{B}$ a $U$-space $ ((X_B, \tau_B), \widetilde{B}, (A_x)_{x \in X_B})$, and for each $ f : B_1 \rightarrow B_2$, a morphism of $U$ space $ (\Spec(f), \widetilde{f})$. 
\begin{definition}
The construction above defines a functor called \emph{the spectrum of U}
\[ \mathcal{B} \stackrel{\Spec}{\longrightarrow } U-Spaces \]
\end{definition}
Now we look at a functor going in the converse direction. Let be a $U$-space $ ((X,\tau), \mathbb{A},(A_x)_{x \in X})$. As $\mathcal{B}$ is locally finitely presentable, the category of sheaves over $ (X,\tau)$ with value in $\mathcal{B}$ is equiped with a global section functor $ \Gamma :  \Sh_\mathbb{B}(X,\tau) \rightarrow \mathcal{B}$ sending a sheaf $ \mathbb{A}$ to the $\mathcal{B}$-object of global sections $ \Gamma \mathbb{A} = \mathbb{A}(X)$. For any $ u \in \tau,$ we denote as $ q^X_u : \Gamma\mathbb{A} \rightarrow \mathbb{A}(u) $, and for any $x \in X$, the stalk at $x$ is obtained as a filtered colimit in $\mathcal{B}$
\[ \mathbb{A}\!\mid_x = \underset{x \in u}{\colim}\, \mathbb{A}(u) \]
and the induced map $ q^X_x : \Gamma \mathbb{A} \rightarrow \mathbb{A}\mid_x $ is the filtered colimit of the maps $ q_u^X$ in the slice $ \Gamma \mathbb{A} \downarrow \mathcal{B}$. \\

Now for a morphism of $U$-spaces $ (f,\phi, (u_x)_{x \in X}): (X_1, \tau_1), \mathbb{A}_1, (A^1_x)_{x \in X_1}) \rightarrow (X_2, \tau_2), \mathbb{A}_2, (A^2_x)_{x \in X_2})$ the direct image part of the morphism of sheaf $\phi^\sharp $ takes at $X_1$ the value $ 
\phi^\sharp_{X_1} : \mathbb{A}_1(X_1) \rightarrow \mathbb{A}_2(f^{-1}(X_1))$, but as $ f^{-1}(X_1) = X_2$, this defines a morphism in $\mathcal{B}$ 
\[ \Gamma \mathbb{A}_1 \stackrel{\Gamma \phi}{\longrightarrow} \Gamma \mathbb{A}_2 \]

\begin{definition}
Those data allow us to define a functor 
\[ U-Spaces \stackrel{\Gamma}{\longrightarrow} \mathcal{B}^{op} \]
sending a $U$-space $((X,\tau), \mathbb{A},(A_x)_{x \in X}) $ on the $\mathcal{B}$-object of global sections $\Gamma\mathbb{A}$ and a morphism $ (f,\phi, (u_x)_{x \in X})$ to $\Gamma \phi$. 
\end{definition}

\begin{theorem}[Diers]\label{Dierstheorem}
Let $U : \mathcal{A} \rightarrow \mathcal{B}$ define a Diers context. Then there is an adjunction
\[ \begin{tikzcd}[column sep=large]
\mathcal{B} \quad \arrow[bend left=20]{rr}{\Spec} & \perp & U-Spaces \arrow[bend left=20]{ll}{\Gamma}
\end{tikzcd} \]

\end{theorem}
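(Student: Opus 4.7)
The plan is to exhibit an explicit natural bijection
\[ U\text{-Spaces}(\Spec(B), \mathfrak{X}) \;\simeq\; \mathcal{B}(B, \Gamma \mathfrak{X}) \]
for any $\mathfrak{X} = ((X,\tau),\mathbb{A},(A_x)_{x\in X})$, exhibiting $\eta_B : B \to \Gamma \widetilde{B}$ of \cref{structuralsheaf} as the unit. The forward direction is essentially by definition: a morphism $(f,\phi,(u_x)) : \Spec(B) \to \mathfrak{X}$ yields $\Gamma \phi = \phi^\sharp_{X_B} : \Gamma \widetilde{B} \to \mathbb{A}(f^{-1}(X_B)) = \Gamma \mathbb{A}$, and the corresponding $\mathcal{B}$-morphism is $\Gamma \phi \circ \eta_B$.

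For the inverse, take $g : B \to \Gamma \mathbb{A}$. The point map $f : X \to X_B$ is built stalkwise: for each $x \in X$, the composite $\rho^X_x \circ g : B \to \mathbb{A}\!\mid_x = U(A_x)$ admits by the multi-adjunction property a unique factorization $\rho^X_x \circ g = U(u_x) \circ x_g$ through a local unit $x_g : B \to U(A_{x_g})$ and an $\mathcal{A}$-morphism $u_x : A_{x_g} \to A_x$; set $f(x) := x_g$, producing simultaneously $f$ and the family $(u_x)_{x \in X}$. Continuity of $f$ is verified on the basis $(D_n)_{n \in \mathcal{D}_B}$: if $f(x) \in D_n$, then $\rho^X_x \circ g$ factors through $n$ by a map $C \to \mathbb{A}\!\mid_x$; since $n$ is finitely presentable as an object of $B \downarrow \mathcal{B}$ and $\rho^X_x \circ g$ is the filtered colimit in $B \downarrow \mathcal{B}$ of the $\rho^X_u \circ g$ over neighborhoods $u$ of $x$, such a factorization lifts to $C \to \mathbb{A}(u)$ for some $u \ni x$, whence $u \subseteq f^{-1}(D_n)$.

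The sheaf morphism $\phi^\sharp : \widetilde{B} \to f_*\mathbb{A}$ is constructed by providing a natural transformation $\sigma : \cod \Rightarrow f_*\mathbb{A} \circ D$ indexed on $\mathcal{D}_B$ and then invoking the universal property of the left Kan extension defining $\overline{B}$ together with the sheafification $\gamma$ (using that $f_*\mathbb{A}$ is already a sheaf, so any presheaf morphism $\overline{B} \to f_*\mathbb{A}$ factors through $\widetilde{B}$). For each $n : B \to C$ in $\mathcal{D}_B$ one seeks $\sigma_n : C \to \mathbb{A}(f^{-1}(D_n))$ factoring $\rho^X_{f^{-1}(D_n)} \circ g$ through $n$; the continuity argument yields such factorizations on a cover of $f^{-1}(D_n)$, and the descent condition of $\mathbb{A}$ glues them into a single $\sigma_n$, provided they agree on overlaps. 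Naturality of $\sigma$ in $n$ follows from the universal property of pushouts in $\mathcal{D}_B$.

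Verification of the bijection is a diagram chase: starting from $(f,\phi,(u_x))$, the recovered point map coincides with $f$ by uniqueness of the local-unit factorization, $(u_x)$ matches via the stalk identification $\phi^\flat_x = U(u_x)$, and $\phi^\sharp$ is recovered by uniqueness of maps out of the Kan extension $\overline{B}$. Conversely, starting from $g$, evaluating $\sigma_{1_B}$ at $X_B$ and precomposing with $\eta_B = \gamma_{1_B} \zeta_{1_B}$ returns $g$ by construction. Naturality in both variables is routine. The principal obstacle is the overlap-agreement step in constructing $\sigma_n$: showing that two local factorizations of the same morphism through a diagonally universal $n$ coincide on overlaps. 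This amounts to a uniqueness statement for factorizations through $n$ into sheaves whose stalks lie in $U(\mathcal{A})$, requiring careful use of the left orthogonality of $n$ against morphisms $U(h)$, the stalkwise identity $\mathbb{A}\!\mid_x = U(A_x)$, and the sheaf condition for $\mathbb{A}$.
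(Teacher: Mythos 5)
Your outline follows the paper's proof step for step: the point map is obtained by factorizing $q^X_x\phi$ through the local unit at each stalk, continuity is checked on the basic opens $D_n$ using finite presentability and the filtered-colimit description of $q^X_x$ in the coslice, the direct image part is built by gluing local factorizations via the descent condition and then invoking the universal property of $\lan_D \cod$ followed by sheafification (using that $f_*\mathbb{A}$ is a sheaf), and the inverse image part is the mate, identified at stalks with $U(L_{A_x}(q^X_x\phi))$ via the Diers condition. The one substantive issue is precisely the step you flag as ``the principal obstacle'' and leave unresolved, and there your hinted strategy points in the wrong direction. Left orthogonality of $n$ against the maps $U(h)$ gives unique \emph{fillers} of commutative squares; it does not give uniqueness of maps $C \rightarrow U(A)$ that agree after precomposition with $n$, since diagonally universal morphisms need not be epimorphisms, so orthogonality alone will not make the local sections $s_x$ and $s_y$ agree on $u_x \cap u_y$. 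The paper resolves this differently, and this is the argument you need: because $\phi_*n$ is again a finitely presented diagonally universal morphism under $\Gamma\mathbb{A}$ and $q^X_z$ is the filtered colimit of the $q^X_u$ in $\Gamma\mathbb{A}\downarrow\mathcal{B}$, the two factorizations of $q^X_z$ arising from $s_x$ and $s_y$ are merged on some smaller open $v_z \subseteq u_x\cap u_y$; these $v_z$ cover $u_x\cap u_y$, and the separation half of the sheaf condition for $\mathbb{A}$ then forces the two restrictions to $\mathbb{A}(u_x\cap u_y)$ to coincide. With that substitution your construction coincides with the paper's. Note finally that the paper contents itself with constructing the transpose of a map $\phi : B \rightarrow \Gamma\mathbb{A}$ and does not carry out the bijection and naturality verifications you announce; your insistence on exhibiting $\eta_B$ as the unit is a reasonable completion, but it is additional work, not a replacement for the missing gluing argument.
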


\begin{proof}This proof follows \cite{Diers}[3.6.1]. 
Let be $B$ in $\mathcal{B}$, $((X,\tau), \mathbb{A},(A_x)_{x \in X}) $ a $U$-space and $ \phi : B \rightarrow \Gamma \mathbb{A}$. Then for any $ x \in X$ we have $\mathbb{A}\!\mid_x = U(A_x)$, so we can consider the following factorization 
\[\begin{tikzcd}[column sep=large]
	{B} & {\Gamma \mathbb{A}} \\
	{U(A_{q^X_x \phi})} & {U(A_x)}
	\arrow["{\phi}", from=1-1, to=1-2]
	\arrow["{q^X_x}", from=1-2, to=2-2]
	\arrow["{\eta^{A_x}_{q^X_x \phi}}"', from=1-1, to=2-1]
	\arrow["{U(L_{A_x}(q^X_x \phi))}"', from=2-1, to=2-2]
\end{tikzcd}\]
Then the following map 
\[\begin{tikzcd}[row sep=tiny]
	{(X,\tau)} & {(X_B, \tau_B)} \\
	{x} & {\eta^{A_x}_{q^X_x \phi}}
	\arrow["{f}", from=1-1, to=1-2]
	\arrow[from=2-1, to=2-2, shorten <=4pt, shorten >=4pt, maps to]
\end{tikzcd}\]
is continuous. Indeed, for any finitely presented diagonally universal morphism $ n $, we have 
\[ f^{-1}(D_n) = \big{\{} x \in X \mid \, n \leq \eta^{A_x}_{q^X_x \phi}  \big{\}} \]
where the latter condition says that we have a factorization
\[\begin{tikzcd}
	{B} & {\Gamma\mathbb{A}} \\
	{C} & {\phi_*C} & {\mathbb{A}\!\mid_x}
	\arrow["{n}"', from=1-1, to=2-1]
	\arrow["{\phi_*n}" description, from=1-2, to=2-2]
	\arrow["{\phi}", from=1-1, to=1-2]
	\arrow["{n_*\phi}"', from=2-1, to=2-2]
	\arrow["{q^X_x}", from=1-2, to=2-3, curve={height=-6pt}]
	\arrow[from=2-2, to=2-3, dashed, "s"]
\end{tikzcd}\]
But pushouts of finitely presented diagonally universal morphisms are still finitely presented, that is, $ \phi_*n$ is finitely presented as a diagonally universal morphism under $\Gamma \mathbb{A}$: but as $ q^X_x = \colim_{x \in u} q^X_x$ in the slice $ \Gamma \mathbb{A} \downarrow \mathcal{B}$, we have a factorization through some $ q^X_u$ for some $u$ in $\tau$ such that $ x \in u$ 
\[\begin{tikzcd}
	{\Gamma\mathbb{A}} & {\mathbb{A}(u)} \\
	{\phi_*C} & {\mathbb{A}\!\mid_x}
	\arrow["{\phi_*n}"', from=1-1, to=2-1]
	\arrow[from=2-1, to=2-2, "s"]
	\arrow["{q^X_u}", from=1-1, to=1-2]
	\arrow["{q^u_x}", from=1-2, to=2-2]
	\arrow[from=2-1, to=1-2, dashed, "s_u"]
\end{tikzcd}\]
and moreover, any two such factorizations are equalized by a third one: that is, if there are two opens $u,v$ with $ x \in u,v$ such that $s $ factorizes through $q^u_x, \, q^{v}_x$ as $s_u, s_{v}$, then there is some $ w \subseteq u, v$ and $ x \in w$ such that $ s$ factorizes through $w$ as $ q^{w}_x s_w$. 
And moreover, for any $y \in u$, we still have that $ \phi_*n $ factorizes $ q^X_x$, so that \[ n \leq \eta^{A_y}_{q^X_y}\phi \] and hence $ y \in f^{-1}(D_n)$. This means that $u \subseteq f^{-1}(D_n)$. To sum up, we can choose for each $ x \in f^{-1}(D_n)$ an open $ u_x $ with $ x \in u_x$ and $ u_x \subseteq  f^{-1}(D_n)$, and hence we have \[f^{-1}(D_n) = \bigcup_{x \in f^{-1}(D_n)} u_x \] which is hence open in $\tau$. \\

Now we construct a morphism of sheaves $\psi$ as follows. We first construct the direct image part. As seen above, for any $n \in \mathcal{D}_B$, we can exhibit a cover of $ f^{-1}(D_n) $ with a family $ (u_x)_{x \in f^{-1}(D_n)}$ such that $ x \in u_x$ and we have a factorization 
\[\begin{tikzcd}
	{B} & {\Gamma\mathbb{A}} & {} \\
	{C} & {\mathbb{A}(u_x)}
	\arrow["{n}"', from=1-1, to=2-1]
	\arrow["{q^X_{u_x}}", from=1-2, to=2-2]
	\arrow["{\phi}", from=1-1, to=1-2]
	\arrow["{s_x}"', from=2-1, to=2-2]
\end{tikzcd}\] 
Now, as $\mathbb{A}$ is a sheaf, and $ f^{-1}(D_n)$ is open, we have the limit decomposition
\[ \mathbb{A}(f^{-1}(D_n))   = \lim \Big{ ( } 
\begin{tikzcd}
	{\underset{x \in f^{-1}(D_n)}{\prod} \mathbb{A}(u_x)} & {\underset{x, y \in f^{-1}(D_n)}{\prod} \mathbb{A}(u_x \cap u_y)}
	\arrow[from=1-1, to=1-2, shift left=1]
	\arrow[from=1-1, to=1-2, shift right=1]
\end{tikzcd} \Big{)} \]
So we have to check that the data of the arrows $(s_x)_{x \in X}$ define a cone over the diagram above, which amounts to check that for any $x,y \in X$ the following diagram commutes
\[\begin{tikzcd}
	& {\mathbb{A}(u_x)} \\
	{C} && {\mathbb{A}(u_x \cap u_y)} \\
	& {\mathbb{A}(u_y)}
	\arrow["{s_x}", from=2-1, to=1-2]
	\arrow["{q^{u_x}_{u_x \cap u_y}}", from=1-2, to=2-3]
	\arrow["{s_y}"', from=2-1, to=3-2]
	\arrow["{q^{u_y}_{u_x \cap u_y}}"', from=3-2, to=2-3]
\end{tikzcd}\]
For any $z \in u_x \cap u_y$ we have 
\[ q^{u_x}_z s_x n = q^{u_x}_z q^{X}_{u_x} \phi = q^{u_y}_z q^{X}_{u_y} \phi = q^{u_y}_z s_y n \]
hence $s_x$ and $s_y$ produce factorizations of $ q^X_z$, so by what was said before as a consequence of finite presentability of $n$, those two factorization have to be merged into some $v_z \subseteq u_x \cap u_y$: but this is true for any $z \in u_x \cap u_y$, so the opens $v_z$ cover the intersection, and hence the square above commutes. \\

Then the universal property of the limit provides a unique arrow 
\[\begin{tikzcd}
	{B} & {\Gamma\mathbb{A}} \\
	{C} & {\mathbb{A}(f^{-1}(D_n))}
	\arrow["{\phi}", from=1-1, to=1-2]
	\arrow["{\rho^X_{f^{-1}(D_n)}}", from=1-2, to=2-2]
	\arrow["{n}"', from=1-1, to=2-1]
	\arrow["{s_n}"', from=2-1, to=2-2, dashed]
\end{tikzcd}\]
and the data of all those arrows $(s_n: \cod(n) \rightarrow \mathbb{A}(f^{-1}(D_n)))_{n \in \mathcal{D}_B}$ defines a natural transformation $ s : \cod \Rightarrow f_*\mathbb{A}$: now the universal property of the left Kan extension produces a universal arrow as below
\[\begin{tikzcd}[column sep=large]
	{\mathcal{D}_B} & {} & {\mathcal{B}} \\
	{\tau_B^{op}} \\
	{\tau^{op}}
	\arrow["{D}"', from=1-1, to=2-1]
	\arrow["{f^{-1}}"', from=2-1, to=3-1]
	\arrow["{\cod}"{name=0}, from=1-1, to=1-3]
	\arrow["{\mathbb{A}}"{name=1, swap}, from=3-1, to=1-3, curve={height=12pt}]
	\arrow["{\lan_{\!D}\cod}"{name=2, description}, from=2-1, to=1-3]
	\arrow[Rightarrow, "{s}"', from=0, to=1, shift right=8, curve={height=7pt}, shorten <=4pt, shorten >=4pt, crossing over]
	\arrow[Rightarrow, "{\overline{s}}"', from=2, to=1, shift left=1, shorten <=7pt, shorten >=3pt, dashed]
	\arrow[Rightarrow, "{\zeta^B}"', from=0, to=2, shift left=1, shorten <=2pt, shorten >=2pt]
\end{tikzcd}\]
But now, as $\mathbb{A}$ is a sheaf, so is the direct image $ f_*\mathbb{A}$, hence the natural transformation $ \overline{s}$ factorizes through the sheafification of $\lan_{\!D}\cod$, that is
\[\begin{tikzcd}
	{\lan_{\!D}\cod} & {f_*\mathbb{A}} \\
	{\widetilde{B}}
	\arrow["{\overline{s}}", from=1-1, to=1-2]
	\arrow["{\gamma}"', from=1-1, to=2-1]
	\arrow["{\psi^\sharp}"', from=2-1, to=1-2]
\end{tikzcd}\]
This returns the desired direct image part $\psi^\sharp$. \\

For the inverse image part, the adjunction $ f^*\dashv f_*$ associates a unique mate $ \psi^\flat$ to $ \psi^\sharp$. Now we want to check that this mate behaves correctly at the stalks, that is, that $\psi^\flat = U( L_{A_x}(q^X_x \phi))$. For any $x \in X$ and any $ n \in \mathcal{D}_B$, we have from the remark in part 1 an commutative square 
\[\begin{tikzcd}
	{\widetilde{B}(D_n)} & {\mathbb{A}(f^{-1}(D_n))} \\
	{\widetilde{B}\!\mid_{f(x)}} & {\mathbb{A}\!\mid_x}
	\arrow["{\rho^{D_n}_{f(x)}}"', from=1-1, to=2-1]
	\arrow["{\psi^\sharp_{D_n}}", from=1-1, to=1-2]
	\arrow["{\rho^{f^{-1}(D_n)}_x}", from=1-2, to=2-2]
	\arrow["{\psi^\flat_x}"', from=2-1, to=2-2]
\end{tikzcd}\]
But the stalk of $\widetilde{B}$ at $f(x) = \eta^{A_x}_{\rho^X_x \phi}$ is both the unit of the factorization of $ \rho^X_x\phi$ and the filtered colimit 
\[ \widetilde{B}\!\mid_{f(x)} = \underset{n \leq \eta^{A_x}_{\rho^X_x \phi}}{\colim} \; \widetilde{B}(D_n) \]
inducing $ \phi^\flat_x$ as the universal map \[ \langle \rho^{f^{-1}(D_n)}_x \psi^{\sharp}_{D_n} \rangle_{n \leq \eta^{A_x}_{\rho^X_x \phi}} \]
But from Diers condition, this later must be the left part of the factorization, so that we have the desired equality. Now  we gather all the maps $ (L_{A_x}(q^X_x \phi))_{x \in X}$ to complete the data of the morphism of $U$-space 
\[\begin{tikzcd}[column sep=large]
	{ ((X_B, \tau_B), \widetilde{B}, (A_x)_{x \in X_B})} & {} & {((X,\tau), \mathbb{A},(A_x)_{x \in X})}
	\arrow["{(f, \psi, (L_{A_x}(\rho^{X}_x\phi))_{x \in X})}", from=1-1, to=1-3]
\end{tikzcd}\]
\end{proof}

\begin{remark}
While suited for a large range of examples where the spectrum is expected to be spatial, this construction of the spectrum as a topological space causes some loss of information as it equips the set of local units with a structure of poset, mimicking the specialization order of the spectral topology, while in general local units under a given object and diagonally universal morphisms between them form a category. In some sense Diers only considers the locallic reflection of a small site made of finitely presentable diagonally universal morphisms, though the topos associated to this site may not be locallic, for instance in the case of the \'{e}tale spectrum of a ring. This problem would be fixed by introducing a variant of Diers's construction in terms of ionad, where the link with the other notions of spectrum would appear more clearly; as this would require more involved notion of toposes and ionads, we chose to postpone this task to a later paper devoted to the synthesis between different approaches of the construction of spectra from more topos-theoretic point of view. 
\end{remark}

\section{Spectrum of an arbitrary $\mathcal{B}$-space}

In the previous section we recalled Diers original construction of the spectrum of an object of $\mathcal{B}$ for a Diers context. Observe that the spectral functor thus defines is left adjoint to a global section functor sending a $U$-space to the $\mathcal{B}$-object of global sections of its structural sheaf. This process is somewhat reminiscent of the relative adjoint $ \mathcal{B} \rightarrow \Pi \mathcal{A}$ constructed in \cite{partI}[Proposition 2.6], which returns the points of the spectrum of a given object. Hence the fact that this relative adjoint could extend into a functor $ \Pi \mathcal{B} \rightarrow \Pi\mathcal{A}$ left adjoint to $\Pi U$ invites us to extend Diers Spectral functor to a larger class of $\mathcal{B}$-spaces, amongst which the objects of $\mathcal{B}$ embed as $\mathcal{B}$-spaces with the point as underlying space. This generalized spectrum will be left adjoint to the a functor sending a $U$-space to the corresponding $\mathcal{B}$-space one gets by applying stalk-wise the functor $U$. The process in this part is also totally point-set, hence both quite concrete and somewhat ``handmade", in contrast to the more abstract, yet purer point-free approach in the topos-theoretic methods of \cite{survey}. \\

Throughout this section, we fix a right multi-adjoint $U : \mathcal{A} \rightarrow \mathcal{B}$ satisfying Diers conditions. 

\begin{definition}
We define a \emph{$\mathcal{B}$-space} as the data of a topological space $ (X,\tau)$ together with a sheaf of $\mathcal{B}$-object $ \mathbb{B} : \tau^{op} \rightarrow \mathcal{B}$. A \emph{morphism of $\mathcal{B}$-spaces} $ ((X_1,\tau_1), \mathbb{B}_1) \rightarrow ((X_2,\tau_2), \mathbb{B}_2)$ is the data of an continuous map $ f: X_2 \rightarrow X_1 $ and a pair of morphisms of sheaves \[ (\phi^\flat : f^*\mathbb{B}_1 \rightarrow \mathbb{B}_2, \, \phi^\sharp : \mathbb{B}_1 \rightarrow f_*\mathbb{B}_2)\] corresponding through the adjunction $ f^* \dashv f_*$.
We denote as $ \mathcal{B}-Spaces$ the category of $ \mathcal{B}$-spaces and morphisms between them.
\end{definition}

\begin{proposition}
There is a functor 
\[ U-Spaces \stackrel{\iota_u}{\longrightarrow} \mathcal{B}-Spaces \]
sending a $U$-Space $ ((X, \tau), \mathbb{A}, (A_x)_{x \in X})$ to the $\mathcal{B}$-space $ ((X,\tau), \mathbb{A})$, and a morphism of $ U$-spaces to the induced morphism of $\mathbb{B}$-space. 
\end{proposition}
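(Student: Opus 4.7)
The functor $\iota_U$ is essentially a forgetful assignment, discarding the family $(A_x)_{x \in X}$ on objects and the family $(u_x)_{x \in X_1}$ on morphisms. The plan is to verify well-definedness on objects and morphisms, then preservation of composition and identities, all by direct unpacking of the definitions given above.

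On objects, a $U$-space $((X,\tau), \mathbb{A}, (A_x)_{x \in X})$ already carries $\mathbb{A}$ as a sheaf of $\mathcal{B}$-objects over $(X,\tau)$ by definition; discarding the family $(A_x)_{x \in X}$ together with the stalk isomorphisms $\mathbb{A}\!\mid_x \simeq U(A_x)$ leaves exactly a $\mathcal{B}$-space $((X,\tau), \mathbb{A})$ in the sense of the preceding definition. On morphisms, a $U$-space morphism $(f, \phi, (u_x)_{x \in X_1})$ already contains a continuous map $f$ and a pair $\phi = (\phi^\flat, \phi^\sharp)$ of mutually adjunct morphisms of sheaves corresponding under $f^* \dashv f_*$; retaining only $(f, \phi^\flat, \phi^\sharp)$ yields precisely the data required of a morphism of $\mathcal{B}$-spaces, with the stalk-level constraint $\phi^\flat_x = U(u_x)$ simply being forgotten together with the $u_x$ themselves.

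Functoriality then reduces to observing that composition in $U$-Spaces is defined componentwise: the continuous maps compose in the category of topological spaces, the sheaf-comorphism pairs compose through the canonical coherences $f_1^* f_2^* \simeq (f_2 f_1)^*$ and $(f_2 f_1)_* \simeq f_{2*} f_{1*}$, and the stalkwise arrows $u_x$ compose in $\mathcal{A}$. Dropping the last component therefore commutes strictly with composition on both sides. Similarly, the identity at $((X,\tau), \mathbb{A}, (A_x))$ is given by the identity on $X$, the identity pair on $\mathbb{A}$, and the family $(1_{A_x})_{x \in X}$, whose image under $\iota_U$ is manifestly the identity at $((X,\tau), \mathbb{A})$ in $\mathcal{B}$-Spaces. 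There is no real obstacle in this statement: the entire content is the observation that forgetting the $\mathcal{A}$-valued decoration is well-typed and strictly compatible with the categorical structure on both sides, so that $\iota_U$ is functorial on the nose.
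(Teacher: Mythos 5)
Your proof is correct and coincides with what the paper leaves implicit: the paper states this proposition without proof precisely because the verification is the routine unpacking you carry out, namely that discarding the $(A_x)_{x \in X}$ and $(u_x)_{x \in X_1}$ components is well-typed and strictly compatible with the componentwise composition and identities on both sides. Nothing is missing.
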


\begin{remark}
Beware that, without additional assumption on $U$, this functor may not be faithful, nor even injective on objects. Its action on objects is to forget which $ A_x$ of $\mathcal{A}$ the stalk of $ \mathbb{A}\mid_x = U(A_x)$ comes from. In some sense, the structural sheaf of $ \iota_U((X, \tau), \mathbb{A}, (A_x)_{x \in X})$ has its stalks in the essential image of $U$, so that two $U$-spaces over a same topological space $((X, \tau), \mathbb{A}_1, (A^1_x)_{x \in X})$ and $ ((X, \tau), \mathbb{A}_2, (A^2_x)_{x \in X})$ may become isomorphic in $\mathcal{B}-Spaces$ if $ U(A^1_x) \simeq U(A^2_x)$ in each $ x$, though some $ A^1_x$ and $ A^2_x$ were not isomorphic in $\mathcal{A}$. The same phenomena happens for arrows. \\

Moreover, in most cases, $\iota_U$ is not full, unless $ U$ was itself full, but this corresponds to a specific kind of geometries returning $ T_1$ spaces. In general we do not desire the functor $\iota_U$ to be full as the morphisms $ (f, (\phi^\flat, \phi^\sharp), (u_x)_{x \in X})$ in $U-Spaces$ have the additional data $(u_x)_{x \in X}$ attached to the $\phi^\flat$ part of the morphism of sheaf, forcing the map at stalks $ \phi^\flat_ x : \mathbb{A}_1\mid_{f(x)} \rightarrow \mathbb{A}_2\mid_x$ to be equal to $ U(u_x) : U(A^1_{f(x)}) \rightarrow U(A_x^2)$, hence in the range of $U$, while there is no such condition for general morphisms of $\mathcal{B}$-spaces.  
\end{remark}

In the following we fix a $ \mathcal{B}$-space $ ((X,\tau), \mathbb{B})$. We are going to process by gluing the spectra of the stalks of the structural sheaf $ \mathbb{B}$ at points of $X$, and equip it with a topology generated from the finitely presented diagonally universal morphisms under the values of the sheaf $ \mathbb{B}$ at open of $\tau$. 

\begin{definition}\label{Dierspacegeneralcase}
We define the \emph{Diers space} of $ ((X,\tau), \mathbb{B})$ as the space $ (X_\mathbb{B}, \tau_\mathbb{B})$ with
\[ X_\mathbb{B} = \big{ \{ } (x,\xi) \, \mid \; x \in X, \; \xi \in X_{\mathbb{B}\mid_x} \big{ \} } \]
equiped with the topology generated as \[ \tau_\mathbb{B} = \langle D_{(u,n)} \rangle_{(u,n) \in \mathcal{D}_\mathbb{B}}\] where $ \mathcal{D}_\mathbb{B} = \{ u \in \tau, \; n \in \mathcal{D}_{\mathbb{B}(u)} \} $ and
\[ D_{(u,n)} = \big{\{} (x, \xi) \, \mid \; x \in u, \; {q^u_x}_*n \leq \xi \big{ \}}  
\]
\end{definition}
\begin{remark}
The condition defining the basic open set $ D_{(u,n)}$ concerns the finitely presented diagonally universal morphism $n$ under $ \mathbb{B}(u)$. Each $ \mathbb{B}(u)$ is an object in $\mathcal{B}$, as well as each stalk $ \mathbb{B}$, which is the filtered colimit of all $ \mathbb{B}(u)$ such that $ x \in u$. Hence for such neighborhood $u$ there is a canonical inclusion $ q^u_x : \mathbb{B}(u) \rightarrow \mathbb{B}\mid_x$, and we can push $n$ along this inclusion to get the finitely presented diagonally universal morphism $ {q^u_x}_*n $ in $\mathcal{D}_{\mathbb{B}\mid_x}$. The condition then requires that we have a factorization\[\begin{tikzcd}
	{\mathbb{B}(u)} & {C} \\
	{\mathbb{B}\!\mid_x} & {{q^u_x}_*C} \\
	& {U(A_\xi)}
	\arrow["{q^u_x}"', from=1-1, to=2-1]
	\arrow["{{q^u_x}_*n}", from=2-1, to=2-2]
	\arrow["{n}", from=1-1, to=1-2]
	\arrow[from=1-2, to=2-2, dashed]
	\arrow["\lrcorner"{very near start, rotate=180}, from=2-2, to=1-1, phantom]
	\arrow["{\xi}"', from=2-1, to=3-2]
	\arrow[from=2-2, to=3-2]
\end{tikzcd} \]
\end{remark}

\begin{remark}
Also it is interesting to note that, if $ \tau$ admits a basis $ \tau_0 \hookrightarrow \tau$, then one can generate $\tau_{\mathcal{B}}$ from the open $ D_{(u,n)}$ with $ u \in \tau_0$. While this is not that important in this point-set context as $\tau$ has anyway to be a small set, the use of a basis is essential if we want to extend such a construction to $\mathbb{B}$ objects in arbitrary toposes as in the topos theoretic approach. But we remain in a point-set spirit within this paper.
\end{remark}

It is worth checking that this is a basis:

\begin{lemma}
For each $(u,n)$ and $ (v,m)$ in $\mathbb{D}_\mathbb{B}$, we have \[ D_{(u,n)} \cap D_{(v,m)} = D_{(u\cap v, {\rho^u_{u \cap v}}_*n \vee
{\rho^v_{u \cap v}}_*m)}\]
\end{lemma}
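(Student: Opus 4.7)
The plan is to verify the equality of sets pointwise, reducing the problem to two standard algebraic facts about the pushforward operation in $\mathcal{B}$: that it is functorial in the morphism along which one pushes, and that it preserves finite joins of diagonally universal morphisms.

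First I would unfold the definitions. A pair $(x,\xi)$ lies in $D_{(u,n)} \cap D_{(v,m)}$ iff $x \in u \cap v$ and both ${q^u_x}_*n \leq \xi$ and ${q^v_x}_*m \leq \xi$ hold in $\mathcal{D}_{\mathbb{B}\mid_x}$, whereas $(x,\xi) \in D_{(u \cap v,\, {\rho^u_{u\cap v}}_*n \vee {\rho^v_{u\cap v}}_*m)}$ iff $x \in u \cap v$ and
\[ {q^{u\cap v}_x}_*\bigl({\rho^u_{u\cap v}}_*n \vee {\rho^v_{u\cap v}}_*m\bigr) \leq \xi. \]
So the task reduces to showing that these two conditions on $\xi$ are equivalent at each $x \in u \cap v$.

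Next I would exploit that, since $\mathbb{B}\mid_x$ is the filtered colimit over the neighbourhood filter of $x$, the stalk cocone composes with restrictions: $q^u_x = q^{u\cap v}_x \circ \rho^u_{u\cap v}$, and likewise for $v$. The pasting law for pushouts then yields
\[ {q^u_x}_* n = {q^{u\cap v}_x}_*\bigl({\rho^u_{u\cap v}}_*n\bigr), \qquad {q^v_x}_*m = {q^{u\cap v}_x}_*\bigl({\rho^v_{u\cap v}}_*m\bigr). \]
A second, equally routine pasting argument shows that pushforward along any $f : B \to B'$ preserves finite joins in $\mathcal{D}_B$, because the join $n_1 \vee n_2$ is itself defined as the pushout $C_1 +_B C_2$ and iterated pushouts associate: $f_*(n_1 \vee n_2) = B' +_B (C_1 +_B C_2) \simeq f_*C_1 +_{B'} f_*C_2 = f_*n_1 \vee f_*n_2$. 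Applied to $f = q^{u\cap v}_x$, this rewrites the displayed condition above as ${q^u_x}_*n \vee {q^v_x}_*m \leq \xi$.

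Finally I would invoke the defining universal property of the join in $\mathcal{D}_{\mathbb{B}\mid_x}$, already recorded in the remark on $\mathcal{D}_B$ above: $n_1 \vee n_2 \leq \xi$ iff $n_1 \leq \xi$ and $n_2 \leq \xi$. This converts the single inequality on the right into the pair of inequalities on the left, completing the verification. I do not anticipate a genuine obstacle here: the argument is bookkeeping combining pushout pasting with the description of joins as pushouts. The only point deserving a moment of care is confirming that ${\rho^u_{u\cap v}}_*n \vee {\rho^v_{u\cap v}}_*m$ really lies in $\mathcal{D}_{\mathbb{B}(u\cap v)}$, so that the right-hand side is a legitimate basic open; but this is immediate, since both pushouts and joins of finitely presented diagonally universal morphisms remain in that class.
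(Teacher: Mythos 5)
Your proof is correct and follows essentially the same route as the paper: factor the stalk projections through $\mathbb{B}(u\cap v)$, use pushout pasting to rewrite ${q^u_x}_*n$ and ${q^v_x}_*m$ as pushforwards from $u\cap v$, observe that pushforward commutes with the join (the paper's commuting cube of pushouts), and finish with the universal property of $\vee$. If anything, your version is slightly cleaner in making the two-way equivalence explicit, where the paper only writes out one inclusion.
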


\begin{proof}
First, it is clear that if $ (x, \xi) \in D_{(u,n)} \cap D_{(v,m)}$ then $ x \in u \cap v$. Then the projections $ \rho^u_x$ and $ \rho^v_x$ factorize as
\[\begin{tikzcd}
	{\mathbb{B}(u)} && {\mathbb{B}(v)} \\
	& {\mathbb{B}(u \cap v)} \\
	& {\mathbb{B}\!\mid_x}
	\arrow["{\rho^u_{u \cap v}}", from=1-1, to=2-2]
	\arrow["{\rho^v_{u \cap v}}"', from=1-3, to=2-2]
	\arrow["{\rho^{u \cap v}_x}" description, from=2-2, to=3-2]
	\arrow["{\rho^u_x}"', from=1-1, to=3-2, curve={height=12pt}]
	\arrow["{\rho^u_x}", from=1-3, to=3-2, curve={height=-12pt}]
\end{tikzcd}\]
Now by left cancellation of pushouts we have that \[ {\rho^u_x}_*n = {\rho^{u \cap v}_x}_*({\rho^u_{u \cap v}}_*n) \textrm{  and  } {\rho^v_x}_*m = {\rho^{v \cap v}_x}_*({\rho^v_{u \cap v}}_*m) \] as depicted in the following diagram 
\[\begin{tikzcd}[sep=large]
	& {\mathbb{B}(u)} && {\mathbb{B}(v)} \\
	{C} && {\mathbb{B}(u \cap v)} && {D} \\
	& {{\rho^u_{u \cap v}}_*C} & {\mathbb{B}\!\mid_x} & {{\rho^v_{u \cap v}}_*D} \\
	& {{\rho^u_x}_*C} && {{\rho^v_x}_*D} \\
	&& {A_\xi}
	\arrow["{\rho^u_{u \cap v}}", from=1-2, to=2-3]
	\arrow["{\rho^v_{u \cap v}}"', from=1-4, to=2-3]
	\arrow["{\rho^{u \cap v}_x}" description, from=2-3, to=3-3]
	\arrow["{\rho^u_x}"'{near start}, from=1-2, to=3-3, curve={height=12pt}]
	\arrow["{\rho^v_x}"{near start}, from=1-4, to=3-3, curve={height=-12pt}]
	\arrow["{n}"', from=1-2, to=2-1]
	\arrow["{{\rho^u_{u \cap v}}_*n}" description, from=2-3, to=3-2]
	\arrow["{{\rho^u_x}_*n}" description, from=3-3, to=4-2]
	\arrow["{{\rho^v_x}_*m}" description, from=3-3, to=4-4]
	\arrow["{{\rho^v_{u \cap v}}_*m}" description, from=2-3, to=3-4]
	\arrow["{m}", from=1-4, to=2-5]
	\arrow[from=2-1, to=3-2]
	\arrow[from=3-2, to=4-2]
	\arrow[from=2-5, to=3-4]
	\arrow[from=3-4, to=4-4]
	\arrow["{\xi}" description, from=3-3, to=5-3]
	\arrow[from=4-2, to=5-3, dashed]
	\arrow[from=4-4, to=5-3, dashed]
	\arrow[from=2-1, to=4-2, curve={height=12pt}]
	\arrow["\lrcorner"{very near start, rotate=135}, from=3-2, to=1-2, phantom]
	\arrow["\lrcorner"{very near start, rotate=135}, from=3-4, to=1-4, phantom]
	\arrow["\lrcorner"{very near start, rotate=90}, from=4-2, to=2-3, phantom]
	\arrow["\lrcorner"{very near start, rotate=180}, from=4-4, to=2-3, phantom]
	\arrow[from=2-5, to=4-4, curve={height=-12pt}]
\end{tikzcd}\]
Moreover commutation of pushouts ensures that all squares in the following cube are pushouts
\[\begin{tikzcd}
	& {\mathbb{B}(u \cap v)} \\
	{{\rho^u_{u \cap v}}_*C} & {\mathbb{B}\!\mid_x} & {{\rho^v_{u \cap v}}_*D} \\
	{{\rho^u_x}_*C} & {{\rho^u_{u \cap v}}_*C +_{\mathbb{B}(u \cap v)} {\rho^v_{u \cap v}}_*D} & {{\rho^v_x}_*D} \\
	& {{\rho^u_x}_*C  +_{\mathbb{B}\mid_x}{\rho^u_x}_*C}
	\arrow["{\rho^{u \cap v}_x}" description, from=1-2, to=2-2]
	\arrow["{{\rho^u_{u \cap v}}_*n}" description, from=1-2, to=2-1]
	\arrow["{{\rho^u_x}_*n}"'{near start} description, from=2-2, to=3-1]
	\arrow["{{\rho^v_x}_*m}"{near start} description, from=2-2, to=3-3]
	\arrow["{{\rho^v_{u \cap v}}_*m}" description, from=1-2, to=2-3]
	\arrow[from=2-1, to=3-1]
	\arrow[from=2-3, to=3-3]
	\arrow[from=3-1, to=4-2, dashed]
	\arrow[from=3-3, to=4-2, dashed]
	\arrow["\lrcorner"{very near start, rotate=90}, from=3-1, to=1-2, phantom]
	\arrow["\lrcorner"{very near start, rotate=180}, from=3-3, to=1-2, phantom]
	\arrow[from=2-1, to=3-2, crossing over]
	\arrow[from=2-3, to=3-2, crossing over]
	\arrow[from=3-2, to=4-2]
	\arrow["\lrcorner"{very near start, rotate=135}, from=3-2, to=1-2, phantom]
	\arrow["\lrcorner"{very near start, rotate=180}, from=4-2, to=2-1, phantom]
	\arrow["\lrcorner"{very near start, rotate=90}, from=4-2, to=2-3, phantom]
\end{tikzcd}\]
But we have \[  {\rho^u_x}_*n \vee
{\rho^v_x}_*m  \leq \xi  \] 
and hence \[ {\rho^u_{u \cap v}}_*( {\rho^u_{u \cap v}}_*n \vee
{\rho^v_{u \cap v}}_*m) \leq \xi \]
which ensures that $ (x,\xi) \in D_{(u\cap v, {\rho^u_{u \cap v}}_*n \vee
{\rho^v_{u \cap v}}_*m)}$
\end{proof}

\begin{proposition}
We have a continuous map
\[\begin{tikzcd}[row sep=tiny]
	{(X_\mathbb{B}, \tau_\mathbb{B})} & {(X, \tau)} \\
	{(x,\xi)} & {x}
	\arrow["{\eta}", from=1-1, to=1-2]
	\arrow[from=2-1, to=2-2, maps to]
\end{tikzcd}\]
which is moreover open. \end{proposition}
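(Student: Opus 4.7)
The plan is to verify continuity and openness separately by reducing each claim to a property of basic opens. Two facts will do the work repeatedly: the identity is the minimum of the factorization order (since $n = n \circ 1$ for every $n$), and diagonally universal morphisms are left orthogonal to any morphism in the range of $U$.

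For continuity, let $v \in \tau$ and compute $\eta^{-1}(v) = \{(x, \xi) \in X_\mathbb{B} \mid x \in v\}$. The identity $1_{\mathbb{B}(v)}$ belongs to $\mathcal{D}_{\mathbb{B}(v)}$ trivially, its pushout ${q^v_x}_*1_{\mathbb{B}(v)}$ along the stalk map $q^v_x : \mathbb{B}(v) \to \mathbb{B}\!\mid_x$ remains the identity $1_{\mathbb{B}\mid_x}$, and every local unit $\xi \in X_{\mathbb{B}\mid_x}$ satisfies $1_{\mathbb{B}\mid_x} \leq \xi$. Hence $\eta^{-1}(v)$ coincides with the basic open $D_{(v, 1_{\mathbb{B}(v)})}$, and so lies in $\tau_\mathbb{B}$.

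For openness it suffices to show that $\eta$ sends each basic open to an open of $\tau$ and then pass to unions. I would establish the sharper identity $\eta(D_{(u,n)}) = u$. The inclusion $\eta(D_{(u,n)}) \subseteq u$ is immediate from the definition. For the reverse inclusion, fix $x \in u$ and consider the pushout morphism ${q^u_x}_*n : \mathbb{B}\!\mid_x \to C'$, which is again diagonally universal since $n$ is. By the right multi-adjointness of $U$ applied to $C'$ there exists a local unit $y : C' \to U(A)$, and factoring the composite $y \circ {q^u_x}_*n$ through the cone of local units of $\mathbb{B}\!\mid_x$ produces a local unit $\xi \in X_{\mathbb{B}\mid_x}$ together with a morphism $g$ of $\mathcal{A}$ such that $y \circ {q^u_x}_*n = U(g) \circ \xi$. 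Left orthogonality of the diagonally universal morphism ${q^u_x}_*n$ against $U(g)$ then supplies a unique filler $\sigma : C' \to U(L_A(y \circ {q^u_x}_*n))$ with $\sigma \circ {q^u_x}_*n = \xi$, witnessing ${q^u_x}_*n \leq \xi$ and hence $(x, \xi) \in D_{(u,n)}$.

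The only nontrivial step is this last existence argument: a priori $\xi$ factors the composite $y \circ {q^u_x}_*n$, not ${q^u_x}_*n$ itself, and one must invoke the orthogonality clause in the definition of diagonally universal morphism to upgrade this into a genuine factorization of $\xi$ through ${q^u_x}_*n$. This is precisely the same mechanism that underlies the computation $\Spec(f)^{-1}(D_n) = D_{f_*n}$ recalled in Section 1, and the present proposition can be read as its sheaf-theoretic generalization across the varying stalks of $\mathbb{B}$.
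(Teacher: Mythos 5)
Your continuity argument is the paper's own, word for word in substance: the paper likewise identifies $\eta^{-1}(u)$ with the basic open $D_{(u,1_{\mathbb{B}(u)})}$ on the grounds that ${q^u_x}_*(1_{\mathbb{B}(u)}) = 1_{\mathbb{B}\mid_x}$ is initial in $\mathcal{D}_{\mathbb{B}\!\mid_x}$ and hence factors every local unit. For openness the paper merely asserts $\eta(D_{(u,n)}) = u$ with no argument, so your attempt to actually prove the reverse inclusion is a genuine addition, and the orthogonality step in it is correctly executed: once you have \emph{some} morphism $y : C' \rightarrow U(A)$, the local unit $\xi$ of $y \circ {q^u_x}_*n$ does lift across the diagonally universal morphism ${q^u_x}_*n$ by left orthogonality against the arrow $U(L_A(y\circ{q^u_x}_*n))$, yielding ${q^u_x}_*n \leq \xi$ and hence $(x,\xi) \in D_{(u,n)}$.

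The gap is the sentence ``by the right multi-adjointness of $U$ applied to $C'$ there exists a local unit $y : C' \rightarrow U(A)$.'' Right multi-adjointness provides a small, \emph{possibly empty}, family of local units jointly factoring all morphisms $C' \rightarrow U(A)$; it does not guarantee that any such morphism exists. If $X_{C'} = \emptyset$ for $C' = \cod({q^u_x}_*n)$, the fibre of $D_{(u,n)}$ over $x$ is empty, $x \notin \eta(D_{(u,n)})$, and the identity $\eta(D_{(u,n)}) = u$ fails. This is not hypothetical: in the Zariski example $U : \mathbf{Field} \rightarrow \mathbf{CRing}$, take $\mathbb{B}$ the sheaf of continuous real functions on $\mathbb{R}$ and $n$ the localization of $\mathbb{B}(\mathbb{R})$ at $f(t) = \max(t,0)$; the pushout to the stalk at $x$ is the zero ring exactly when $f$ vanishes near $x$, so $\eta(D_{(\mathbb{R},n)}) = [0,\infty)$, which is not even open. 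The step therefore cannot be repaired without an additional hypothesis (for instance that every object of $\mathcal{B}$ admits at least one morphism into the image of $U$, so that every cone of local units is nonempty). To be fair, the paper's one-line assertion suffers from exactly the same defect; your proof has the merit of making visible where the missing assumption enters.
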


\begin{proof}
For $ u \in \tau$, we have $ \eta^{-1}(u) = \{ (x,\xi) \mid x \in u, \, \xi \in X_{\mathbb{B}\mid_x} \} $ which coincides with the basic open $D_{(u, 1_{\mathbb{B}(u)})}$ as $ {q^u_x}_*(1_{\mathbb{B}(u)}) = 1_{\mathbb{B}\mid_x}$ is the initial object of $ \mathcal{D}_{\mathbb{B}\!\mid_x}$ hence factorizes any local unit $\xi $ under $ \mathbb{B}\mid_x$. It is moreover open because the direct image of an open $D_{(u,n)} $ along $\eta$ is the underlying open of $\tau$, that is $ \eta(D_{(u,n)}) = u$.
\end{proof}

\begin{proposition}
and also in each $ x \in X$ a continuous map
\[\begin{tikzcd}[row sep=tiny]
	{(X_{\mathbb{B}\mid_x}, \tau_{\mathbb{B}\mid_x})} & {(X_\mathbb{B}, \tau_\mathbb{B})} \\
	{\xi} & {(x,\xi)}
	\arrow["{\iota_x}", from=1-1, to=1-2, hook]
	\arrow[from=2-1, to=2-2, maps to]
\end{tikzcd}\]
exhibiting the Diers space as the coproduct in $\mathcal{T}op$ of the Diers spaces of the stalks \[ X_\mathbb{B} = \coprod_{x \in X} X_{\mathbb{B}\mid_x} \]
and $\eta$ as the induced map $ \langle c_x \rangle_{x \in X}$ where $ c_x : X_{\mathbb{B}\mid_x} \rightarrow X$ is the constant map equal to $x$. 
\end{proposition}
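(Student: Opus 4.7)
The plan is to verify the three ingredients of the claim in turn: continuity of each inclusion $\iota_x$, the universal property of the coproduct in $\mathcal{T}op$, and the factorization of $\eta$ as $\langle c_x \rangle_{x \in X}$.

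First, continuity of $\iota_x$ is checked on the generating basis: for $(u,n) \in \mathcal{D}_\mathbb{B}$, the preimage $\iota_x^{-1}(D_{(u,n)})$ is empty when $x \notin u$, and when $x \in u$ the defining condition $(q^u_x)_*n \leq \xi$ identifies it with the basic open $D^{\mathbb{B}\!\mid_x}_{(q^u_x)_*n}$ of the stalk Diers space $(X_{\mathbb{B}\!\mid_x}, \tau_{\mathbb{B}\!\mid_x})$. The set-level decomposition $X_\mathbb{B} = \bigsqcup_x X_{\mathbb{B}\!\mid_x}$ is immediate from the definition of $X_\mathbb{B}$ as pairs $(x,\xi)$ with $\xi \in X_{\mathbb{B}\!\mid_x}$, so the $\iota_x$ are jointly bijective onto $X_\mathbb{B}$. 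Invoking the universal property of coproducts in $\mathcal{T}op$, the continuous maps $\iota_x$ induce a canonical continuous bijection $\phi : \coprod_x X_{\mathbb{B}\!\mid_x} \longrightarrow (X_\mathbb{B}, \tau_\mathbb{B})$, and since $\eta \circ \iota_x$ is tautologically the constant map $c_x$, the factorization $\eta = \langle c_x \rangle_{x \in X}$ along $\phi$ is automatic.

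To complete the coproduct identification one must show $\phi$ is a homeomorphism, i.e.\ that $\tau_\mathbb{B}$ coincides with the disjoint-union topology. One inclusion is immediate from continuity of $\phi$. For the converse, given $U$ open in the disjoint-union topology and a point $(x,\xi) \in U$, I would produce a basic open $D_{(u,n)} \in \tau_\mathbb{B}$ with $(x,\xi) \in D_{(u,n)} \subseteq U$. Openness of $\iota_x^{-1}(U)$ at $\xi$ delivers some $m \in \mathcal{D}_{\mathbb{B}\!\mid_x}$ with $m \leq \xi$ and $D_m^{\mathbb{B}\!\mid_x} \subseteq \iota_x^{-1}(U)$; finite presentability of $m$ combined with the filtered colimit $\mathbb{B}\!\mid_x = \colim_{x \in u} \mathbb{B}(u)$ produces a neighborhood $u \ni x$ and an $n \in \mathcal{D}_{\mathbb{B}(u)}$ with $(q^u_x)_*n = m$, placing $(x,\xi)$ in $D_{(u,n)}$.

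The main obstacle is refining $u$ so that the entire basic open $D_{(u,n)}$ lies in $U$. Concretely, this requires passing from the pointwise information ``$D_m^{\mathbb{B}\!\mid_x} \subseteq \iota_x^{-1}(U)$'' at $x$ to the analogous inclusion $D^{\mathbb{B}\!\mid_y}_{(q^u_y)_*n} \subseteq \iota_y^{-1}(U)$ for every $y$ in some subneighborhood of $x$ - a local-coherence statement that does not follow formally from fiberwise openness of $U$ alone. I expect the argument to proceed by applying the stalk lemma (\cref{stalk lemma}), which relates the local units under a stalk to those under the sheaf on neighborhoods, together with the finite-presentation hypothesis of the Diers context, in order to propagate the inclusion from $x$ to nearby stalks and shrink $u$ accordingly. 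This propagation step is the heart of the argument and is where I anticipate spending the bulk of the effort.
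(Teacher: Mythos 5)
Your continuity argument for $\iota_x$ is exactly the paper's proof: the paper's entire justification is the computation $\iota_x^{-1}(D_{(u,n)}) = D_{{q^u_x}_*n}$ when $x \in u$ and $\iota_x^{-1}(D_{(u,n)}) = \emptyset$ otherwise, and it says nothing further. So up to the continuous bijection $\phi : \coprod_{x} X_{\mathbb{B}\!\mid_x} \rightarrow (X_\mathbb{B},\tau_\mathbb{B})$ and the factorization $\eta = \langle c_x \rangle_{x \in X}$, you are aligned with --- indeed already more careful than --- what the paper actually establishes.

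The step you defer, showing that $\phi$ is a homeomorphism, is where the real issue lies, and the stalk lemma will not rescue it: the coproduct claim is false as literally written unless $X$ is discrete. In the disjoint-union topology each fiber $\eta^{-1}(x) = \iota_x(X_{\mathbb{B}\!\mid_x})$ is open; but the preceding proposition shows $\eta$ is an open map, so if $\eta^{-1}(x)$ were open in $\tau_\mathbb{B}$ and nonempty, then $\eta(\eta^{-1}(x)) = \{x\}$ would be open in $X$. Equivalently, every nonempty basic open $D_{(u,n)}$ has image $u$ under $\eta$ and hence spreads over the whole of $u$, never staying inside a single fiber --- which is precisely why the ``propagation'' you anticipate (passing from $D^{\mathbb{B}\!\mid_x}_m \subseteq \iota_x^{-1}(U)$ to the analogous inclusion over nearby stalks) cannot produce a basic open contained in one fiber. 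For non-discrete $X$ the topology $\tau_\mathbb{B}$ is strictly coarser than the disjoint-union topology, so the identification $X_\mathbb{B} = \coprod_x X_{\mathbb{B}\!\mid_x}$ holds only at the level of underlying sets, or as a continuous bijection from the coproduct. Your instinct that this is the hard part was sound; the honest conclusion is that the hard part is not provable, and the proposition (whose printed proof, like yours, only verifies continuity of the $\iota_x$) should be weakened accordingly.
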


\begin{proof}
Observe that if $x \in u$ we have $\iota_x^{-1}(D_{(u,n)}) = D_{{q^u_x}_*n}$ in $ \tau_{\mathbb{B}\mid_x}$ by property of the pushout; otherwise we have $ \iota_x^{-1}(u)=0$ if $x \notin u$. 
\end{proof}

At each $ u \in \tau$ we have an open $ D_{(u, 1_{\mathbb{B}(u)})}$, and this define an open subspace 
\[\begin{tikzcd}
	{(D_{(u, 1_{\mathbb{B}(u)})}, \tau_{\mathbb{B}}\mid_{D_{(u, 1_{\mathbb{B}(u)}}})} && {(X_\mathbb{B}, \tau_\mathbb{B})} 
	\arrow["{\iota_{(u, 1_{\mathbb{B}(u)})}}", from=1-1, to=1-3, hook]
\end{tikzcd}\]

Observe that we also have, for any $u \in \tau$, a canonical map 
\[\begin{tikzcd}[row sep=tiny]
	D_{(u, 1_{\mathbb{B}(u)})} & {X_{\mathbb{B}(u)}} \\
	{(x,\xi)} & {\eta^{A_\xi}_{\xi \rho^u_x}}
	\arrow["{p_u}", from=1-1, to=1-2]
	\arrow[from=2-1, to=2-2, maps to]
\end{tikzcd}\]sending any $ (x,\xi)$ with $x \in u$ to the unit under $\mathbb{B}(u)$ factorizing the composite
\[\begin{tikzcd}
	{\mathbb{B}(u)} & {U(A_{\xi\rho^u_x})} \\
	{\mathbb{B}\!\mid_x} & {U(A_\xi)}
	\arrow["{\eta^{A_\xi}_{\xi\rho^u_x}}", from=1-1, to=1-2]
	\arrow["{U(L_{A_\xi}(\xi\rho^u_x))}", from=1-2, to=2-2]
	\arrow["{\xi}"', from=2-1, to=2-2]
	\arrow["{\rho^u_x}"', from=1-1, to=2-1]
\end{tikzcd}\]
Moreover Diers condition ensures that this map is continuous for $ U(A_{\xi\rho^u_x}) = \colim_{n \leq \xi\rho^u_x} \cod(n)$ and $n \leq \xi\rho^u_x$ if and only if $ {\rho^{u}_x}_*n \leq \xi$. Hence in particular $ p_u^{-1}$ coincides on the basis $ \mathcal{D}_{\mathbb{B}(u)}$ with the pushout along $ {\rho^u_x}$, that is we have 
\[\begin{tikzcd}[row sep=tiny]
	{\mathcal{D}_{\mathbb{B}(u)}} & {\mathcal{D}_{\mathbb{B}}} \\
	{n} & {(u,n)}
	\arrow["{p_u^{-1}}", from=1-1, to=1-2, hook]
	\arrow[from=2-1, to=2-2, shorten <=2pt, shorten >=2pt, maps to]
\end{tikzcd}\]

Before going further, we need the following key observation:

\begin{lemma}
For any $x \in X$ and any finitely presented diagonally universal morphism $ n$ in $ \mathcal{D}_{\mathbb{B}\mid_x} $, there is some $u$ with $ x \in u$ and $m $ in $\mathcal{D}_{\mathbb{B}(u)}$ such that $ n= {\rho^u_x}_*m$. 
\end{lemma}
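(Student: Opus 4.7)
The plan is to unpack the definition of $\mathcal{D}_{\mathbb{B}\mid_x}$, lift the defining data of $n$ from the stalk to some neighborhood using finite presentability, and then recover $n$ as a pushout by pasting.

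First, recall that an object of $\mathcal{D}_{\mathbb{B}\mid_x}$ is by definition a pushout under $\mathbb{B}\mid_x$ of some morphism $d : d_0 \to d_1$ lying in $\mathcal{D}$, that is, a diagonally universal morphism between finitely presented objects of $\mathcal{B}$. So $n$ fits in a pushout square
\[\begin{tikzcd}
d_0 \arrow[d, "d"'] \arrow[r, "f"] \arrow[rd, "\ulcorner", phantom, very near end] & \mathbb{B}\mid_x \arrow[d, "n"] \\
d_1 \arrow[r] & C
\end{tikzcd}\]
for some map $f : d_0 \to \mathbb{B}\mid_x$. Now $d_0$ is finitely presented in $\mathcal{B}$, and by definition of the stalk we have the filtered colimit $\mathbb{B}\mid_x = \colim_{x \in u} \mathbb{B}(u)$; hence the map $f$ factorizes through the canonical inclusion $\rho^u_x : \mathbb{B}(u) \to \mathbb{B}\mid_x$ for some open $u$ containing $x$, producing $g : d_0 \to \mathbb{B}(u)$ with $f = \rho^u_x \circ g$.

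Form then the pushout $m$ of $d$ along $g$ at $\mathbb{B}(u)$, which by construction lies in $\mathcal{D}_{\mathbb{B}(u)}$:
\[\begin{tikzcd}
d_0 \arrow[d, "d"'] \arrow[r, "g"] \arrow[rd, "\ulcorner", phantom, very near end] & \mathbb{B}(u) \arrow[d, "m"] \arrow[r, "\rho^u_x"] \arrow[rd, "\ulcorner", phantom, very near end] & \mathbb{B}\mid_x \arrow[d, "{\rho^u_x}_*m"] \\
d_1 \arrow[r] & D \arrow[r] & {\rho^u_x}_*D
\end{tikzcd}\]
By the pasting lemma for pushouts, the outer rectangle is itself a pushout of $d$ along $\rho^u_x \circ g = f$; but this is exactly the pushout defining $n$, so by uniqueness of pushouts we obtain $n \simeq {\rho^u_x}_*m$ as required.

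The only non-trivial ingredient is the use of finite presentability of $d_0$ to lift $f$ to some $\mathbb{B}(u)$; everything else is formal manipulation of pushouts. Note that it is essential here that $n$ be \emph{finitely presented}, since only then is its generating map $f$ out of a finitely presented source liftable through the filtered colimit defining the stalk; for an arbitrary axiomatisable diagonally universal morphism under $\mathbb{B}\mid_x$ no such lifting need exist at a single open.
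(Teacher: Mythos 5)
Your proof is correct and follows essentially the same route as the paper: unpack $n$ as a pushout of some $d \in \mathcal{D}$ along an attaching map into the stalk, use finite presentability of the (finitely presented) domain to factor that map through some $\rho^u_x$ in the filtered colimit $\mathbb{B}\!\mid_x = \colim_{x \in u}\mathbb{B}(u)$, and conclude by pasting of pushouts. The only cosmetic difference is that you invoke the composition direction of the pasting law on the two horizontal squares, whereas the paper phrases the same step as a cancellation on a cube; both yield $n \simeq {\rho^u_x}_*m$.
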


\begin{proof}
Recall from \cite{partI} that diagonally universal morphisms of finite presentation under $\mathbb{B}\!\mid_x$ are those induced as pushouts 
\[\begin{tikzcd}
	{K} & {K'} \\
	{\mathbb{B}\!\mid_x} & {C}
	\arrow["{a}"', from=1-1, to=2-1]
	\arrow["{l}", from=1-1, to=1-2]
	\arrow["{l_*a}", from=1-2, to=2-2]
	\arrow["{n}"', from=2-1, to=2-2]
	\arrow["\lrcorner"{very near start, rotate=180}, from=2-2, to=1-1, phantom]
\end{tikzcd}\]
whith $ K, \, K'$ in $\mathcal{B}_\omega$ and $ l$ diagonally universal. But now, as we have a filtered colimit decomposition $ \mathbb{B}\!\mid_x = \colim_{x \in u} \, \mathbb{B}(u)$ and $K$ is finitely presented, $a $ factorizes through some $ \rho^u_x$, and by left cancellation of pushouts, the front square in the following diagram is a pushout for the back and top are so
\[\begin{tikzcd}
	{K} && {K'} \\
	& {\mathbb{B}(u)} && {b_*K'} \\
	& {\mathbb{B}\!\mid_x} && {C}
	\arrow["{a}"', from=1-1, to=3-2, curve={height=12pt}]
	\arrow["{\rho^u_x}"', from=2-2, to=3-2]
	\arrow[from=1-1, to=2-2]
	\arrow["{l}", from=1-1, to=1-3]
	\arrow["{b_*l}" description, from=2-2, to=2-4, crossing over]
	\arrow["{n}"', from=3-2, to=3-4]
	\arrow[from=1-3, to=2-4]
	\arrow[from=2-4, to=3-4]
	\arrow[from=1-3, to=3-4, curve={height=12pt}]
	\arrow["\lrcorner"{very near start, rotate=180}, from=3-4, to=1-1, shift left=2, phantom]
	\arrow["\lrcorner"{very near start}, from=2-4, to=1-1, phantom]
\end{tikzcd}\]
so that $ n = {\rho^u_x}_*b_l$, but $ b_*l $ is in $^\mathcal{D}_{\mathbb{B}(u)}$.
\end{proof}

In some sense, this result says that the categories $ \mathcal{D}_{\mathbb{B}(u)}$ for $x \in u$ are jointly essentially surjective on $ \mathcal{D}_{\mathbb{B}\mid_x}$. This has also the following consequence which will be central to control the stalks of the structural sheaf we are going to construct: 

\begin{theorem}\label{stalk lemma}
For any $ (x, \xi) \in X_\mathbb{B}$, we have a filtered colimit decomposition of $\xi$ in the slice $ \overrightarrow{\mathcal{B}}$:
\[ \xi = \underset{x \in u}{\colim}\, {\rho^u_x}_*\eta^{A_\xi}_{{\xi}\rho^u_x} \]
As a consequence, the cocone $ (U(L_{A_\xi}({\xi}\rho^u_x)))_{x \in u}$ exhibits $ U(A_\xi)$ as a filtered colimit 
\[ U(A_\xi) \simeq \underset{x \in u}{\colim}\, {\rho^u_x}_*U(A_{\xi\rho^u_x}) \]
\end{theorem}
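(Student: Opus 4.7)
The plan is to combine the Diers condition with the lemma just proved via a two-step colimit interchange. The outer colimit will range over the filtered opposite of the neighborhood filter of $x$, while the inner one will apply the Diers condition locally at each $\mathbb{B}(u)$. Three ingredients are in play: (i) the Diers condition, which says any local unit is the filtered colimit of the finitely presented diagonally universal morphisms factorizing it, (ii) the preceding lemma, which says every $n \in \mathcal{D}_{\mathbb{B}\mid_x}$ arises as ${\rho^u_x}_*m$ for some $u \ni x$ and $m \in \mathcal{D}_{\mathbb{B}(u)}$, and (iii) the fact that for any morphism $f$ in $\mathcal{B}$, pushout along $f$ is a left adjoint $f_* : \dom(f) \downarrow \mathcal{B} \to \cod(f) \downarrow \mathcal{B}$, hence preserves colimits.

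Starting from the Diers condition applied to $\xi$ viewed as a local unit under $\mathbb{B}\!\mid_x$, one has $\xi = \colim_{n \in V_\xi} n$ in the slice $\mathbb{B}\!\mid_x \downarrow \mathcal{B}$. I first reindex this colimit using the lemma: every $n \in V_\xi$ is of the form ${\rho^u_x}_*m$, and by left cancellation of pushouts together with the universal property of the local factorization under $\mathbb{B}(u)$, the condition ${\rho^u_x}_*m \leq \xi$ is equivalent to $m \leq \eta^{A_\xi}_{\xi\rho^u_x}$ in $\mathcal{D}_{\mathbb{B}(u)}$. Arranging the pairs $(u,m)$ with $x \in u$ and $m \in V_{\eta^{A_\xi}_{\xi\rho^u_x}}$ into a fibered category over the filter of neighborhoods of $x$, a Fubini-style rewriting yields
\[ \xi = \underset{x \in u}{\colim}\; \underset{m \in V_{\eta^{A_\xi}_{\xi\rho^u_x}}}{\colim} \; {\rho^u_x}_*m. \]
For the inner colimit, since ${\rho^u_x}_*$ preserves colimits and the inner diagram is exactly the one provided by the Diers condition for the local unit $\eta^{A_\xi}_{\xi\rho^u_x}$ under $\mathbb{B}(u)$, one obtains $\colim_m {\rho^u_x}_*m = {\rho^u_x}_*\,\colim_m m = {\rho^u_x}_*\eta^{A_\xi}_{\xi\rho^u_x}$, which gives the desired first equality.

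The second assertion follows by taking codomains. Filtered colimits in the slice $\mathbb{B}\!\mid_x \downarrow \mathcal{B}$ are computed pointwise on codomains in $\mathcal{B}$; the codomain of ${\rho^u_x}_*\eta^{A_\xi}_{\xi\rho^u_x}$ is the pushout $\mathbb{B}\!\mid_x +_{\mathbb{B}(u)} U(A_{\xi\rho^u_x})$, which is denoted ${\rho^u_x}_*U(A_{\xi\rho^u_x})$, so taking $\cod$ on both sides of the first equality yields $U(A_\xi) \simeq \colim_{x \in u} {\rho^u_x}_*U(A_{\xi\rho^u_x})$.

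The main delicate step is the reindexing in the first paragraph: one must justify that the evident map from the category of triples $(u,m,{\rho^u_x}_*m\leq\xi)$ to the poset $V_\xi$ induced by $(u,m)\mapsto {\rho^u_x}_*m$ does not alter the colimit. The lemma provides essential surjectivity, and compatibility of pushouts along the refinement maps $\rho^u_v : \mathbb{B}(u) \to \mathbb{B}(v)$ for $v \subseteq u$ (combined with $\rho^u_x = \rho^v_x \rho^u_v$) gives filteredness and cofinality in the required sense, so that the two colimits coincide.
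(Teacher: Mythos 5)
Your proposal is correct and follows essentially the same route as the paper: apply the Diers condition to $\xi$ under $\mathbb{B}\!\mid_x$, reindex the colimit over pairs $(u,m)$ using the preceding lemma together with the equivalence ${\rho^u_x}_*m \leq \xi \Leftrightarrow m \leq \eta^{A_\xi}_{\xi\rho^u_x}$, perform the Fubini interchange, identify the inner colimit with ${\rho^u_x}_*\eta^{A_\xi}_{\xi\rho^u_x}$ via the Diers condition at $\eta^{A_\xi}_{\xi\rho^u_x}$ and the cocontinuity of pushout, and deduce the second claim by taking codomains. Your explicit attention to the cofinality of the reindexing is a point the paper passes over more quickly, but the argument is the same.
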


\begin{proof}
First, for any $u $ with $ x \in u$, we have $ n \leq \eta^{A_\xi}_{A_{\xi\rho^u_x}} $ if and only if $ {\rho^u_x}_*n \leq \xi$. But Diers condition at $ \mathbb{B}\!\mid_x$ says that $ \xi = \colim_{n \leq \xi} \, n$, and from the previous lemma, we know that we can precise this colimit as 
\begin{align*}
    \xi &\simeq  \underset{ x \in u \atop n \in \mathcal{D}_{\mathbb{B}(u)} \; n \leq \eta^{A_\xi}_{A_{\xi\rho^u_x}} }{\colim} \, {\rho^u_x}_*n \, \\
    &\simeq \, \underset{x \in u}{\colim} \; {\rho^u_x}_* \big{(}\underset{n \leq \eta^{A_\xi}_{A_{\xi\rho^u_x}}}{\colim} \, \cod(n) \big{)} \, \\
    &\simeq \,  \underset{x \in u}{\colim}\, {\rho^u_x}_*\eta^{A_\xi}_{\xi\rho^u_x}
\end{align*}

where the last isomorphism comes from Diers condition at $ \eta^{A_\xi}_{\xi \rho^u_x}$. The second item is deduced from the fact that $\cod$ commutes with filtered colimits.
\end{proof}

Now we describe the process to construct a structural sheaf $ \widetilde{\mathbb{B}}$ on ${(X_\mathbb{B}, \tau_\mathbb{B})}$ associated to $\mathbb{B}$. The poset $ \mathcal{D}_{\mathbb{B}} = \{ D_{(u,n)} \mid \, u \in \tau, \, n \in \mathcal{D}_{\mathbb{B}(u)} \}$ is equiped with a functor 
\[\begin{tikzcd}[row sep=tiny]
	{ \mathcal{D}_{\mathbb{B}}} & {\mathcal{B}} \\
	{(u,n)} & {\cod(n)}
	\arrow["{\cod \pi_1}", from=1-1, to=1-2]
	\arrow[from=2-1, to=2-2, maps to]
\end{tikzcd}\]
and then consider its Kan extension along the inclusion into the spectral topology
\[\begin{tikzcd}[sep=large]
	{ \mathcal{D}_{\mathbb{B}}} & {\mathcal{B}} \\
	{\tau_\mathbb{B}}
	\arrow["{\cod \pi_1}"{name=0}, from=1-1, to=1-2]
	\arrow["{\iota_\mathbb{B}}"', from=1-1, to=2-1]
	\arrow["{\lan_{\iota_\mathbb{B}} \cod \pi_1}"{name=1, swap}, from=2-1, to=1-2]
	\arrow[Rightarrow, "{\zeta^{\mathbb{B}}}"'{very near start}, from=0, to=1, shorten <=-4pt, shorten >=13pt, shift right=5]
\end{tikzcd}\]
and define $ \widetilde{\mathbb{B}}$ as the sheafification for the topology $ \tau_\mathbb{B}$
\[ \widetilde{\mathbb{B}} = \mathfrak{a}_{\tau_\mathbb{B}}(\lan_{\!\iota_\mathbb{B} } \cod \pi_1) \]

At this point it is worth giving some detail on the behavior on the sheaf $\mathbb{B}$ and its relation with the spectra of the values at open $ \mathbb{B}(u)$. For each $ u \in \tau$ the object $ \mathbb{B}(u)$ is in $\mathcal{B}$, hence has itself a spectrum \[ (X_{\mathbb{B}(u)}, \tau_{\mathbb{B}(u)}, \widetilde{\mathbb{B}(u)})\]
Then the structural sheaf $ \widetilde{\mathbb{B}}$ can be compared to the structural sheaves of the form $ \widetilde{\mathbb{B}(u)}$ as follows: indeed we had in each $u \in \tau$ an inclusion on one side along which one can restrict the structural sheaf $\widetilde{\mathbb{B}}$ into $ \iota_{(u,1_{\mathbb{B}(u)})}^*\widetilde{\mathbb{B}}$. But after restriction we could define a continuous map $p_u$
\[\begin{tikzcd}
	{	{(D_{(u, 1_{\mathbb{B}(u)})}, \tau_{\mathbb{B}}\mid_{D_{(u, 1_{\mathbb{B}(u)})}})}} & {(X_{\mathbb{B}(u)}, \tau_{\mathbb{B}(u)})} \\
	{{(X_\mathbb{B}, \tau_\mathbb{B})}}
	\arrow["{\iota_{(u,1_{\mathbb{B}(u)})} }"', from=1-1, to=2-1, hook]
	\arrow["{p_u}", from=1-1, to=1-2]
\end{tikzcd}\]
\begin{lemma}\label{restrictions}
For each $ u \in \tau$, the restriction of $\widetilde{\mathbb{B}}$ along $p_u$ is related to $ \widetilde{\mathbb{B}(u)}$ through a canonical morphism of sheaf
\[ \widetilde{p_u}^\sharp : \widetilde{\mathbb{B}(u)} \rightarrow  {p_u}_*\widetilde{\mathbb{B}}\mid_{D_{(u, 1_{\mathbb{B}(u)})}} \]
\end{lemma}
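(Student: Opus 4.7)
The construction of $\widetilde{p_u}^\sharp$ proceeds in two stages that mirror the definition of the structural sheaves: first at the presheaf level via the universal property of the left Kan extension defining $\overline{\mathbb{B}(u)}$, then at the sheaf level via sheafification.

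First I would observe that for each $n \in \mathcal{D}_{\mathbb{B}(u)}$ one has $p_u^{-1}(D_n) = D_{(u,n)}$: this is exactly the equivalence $n \leq \eta^{A_\xi}_{\xi\rho^u_x}$ iff ${\rho^u_x}_*n \leq \xi$ that was already used to establish continuity of $p_u$. Consequently
\[({p_u}_*\widetilde{\mathbb{B}}\mid_{D_{(u, 1_{\mathbb{B}(u)})}})(D_n) = \widetilde{\mathbb{B}}(D_{(u,n)}),\]
and composing the universal cocone map $\zeta^{\mathbb{B}}_{(u,n)} : \cod(n) \to \lan_{\iota_\mathbb{B}}(\cod\pi_1)(D_{(u,n)})$ with the sheafification unit $\gamma^\mathbb{B}$ yields for each such $n$ a canonical arrow
\[s_n : \cod(n) \longrightarrow \widetilde{\mathbb{B}}(D_{(u,n)}).\]

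Next I would verify that the family $(s_n)_{n \in \mathcal{D}_{\mathbb{B}(u)}}$ assembles into a natural transformation $s : \cod \Rightarrow ({p_u}_*\widetilde{\mathbb{B}}\mid_{D_{(u, 1_{\mathbb{B}(u)})}}) \circ D^{\mathbb{B}(u)}$. This reduces to the observation that the assignment $n \mapsto (u,n)$ is a monotone map $\mathcal{D}_{\mathbb{B}(u)} \to \mathcal{D}_\mathbb{B}$ (inequalities $n_1 \leq n_2$ in $\mathcal{D}_{\mathbb{B}(u)}$ are preserved as $(u,n_1) \leq (u,n_2)$, equivalently $D_{(u,n_2)} \subseteq D_{(u,n_1)}$), so that the required commuting square for $n_1 \leq n_2$ follows from the naturality of the Kan extension cocone $\zeta^\mathbb{B}$ along this subposet inclusion together with the naturality of $\gamma^\mathbb{B}$. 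By the universal property of the left Kan extension $\overline{\mathbb{B}(u)} = \lan_{D^{\mathbb{B}(u)}}\cod$, the transformation $s$ factors uniquely through $\zeta^{\mathbb{B}(u)}$ as a presheaf morphism
\[\overline{p_u} : \overline{\mathbb{B}(u)} \Longrightarrow {p_u}_*\widetilde{\mathbb{B}}\mid_{D_{(u, 1_{\mathbb{B}(u)})}}.\]

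Finally, since restriction to an open subspace and direct image along a continuous map both preserve the sheaf condition, the presheaf ${p_u}_*\widetilde{\mathbb{B}}\mid_{D_{(u, 1_{\mathbb{B}(u)})}}$ is already a sheaf on $\tau_{\mathbb{B}(u)}$. The universal property of the sheafification $\gamma^{\mathbb{B}(u)} : \overline{\mathbb{B}(u)} \Rightarrow \widetilde{\mathbb{B}(u)}$ then produces a unique factorization of $\overline{p_u}$ through $\widetilde{\mathbb{B}(u)}$, yielding the desired morphism of sheaves $\widetilde{p_u}^\sharp$. The main obstacle in this plan is the naturality check for $(s_n)$; it is conceptually transparent but requires carefully tracking the interaction between pushouts along $\rho^u_x$, the factorization order on $\mathcal{D}_\mathbb{B}$, and the components of $\zeta^\mathbb{B}$, so it is best organized as a single diagram chase rather than an ad hoc verification.
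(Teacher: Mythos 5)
Your proof is correct, and it follows the same overall presheaf--then--sheafify architecture as the paper's, but the technical core is genuinely different. The paper works with the pointwise colimit formulas for the two left Kan extensions, $\overline{\mathbb{B}(u)}(D_n) = \colim_{D_n \subseteq D_m} \cod(m)$ and $\overline{\mathbb{B}}(D_{(u,n)}) = \colim_{D_{(u,n)} \subseteq D_{(v,m)}} \cod(m)$, and obtains the comparison map from an inclusion of indexing posets; since the source colimit ranges over \emph{all} $m$ with $D_n \subseteq D_m$ (not merely those with $n \leq m$ in the factorization order, as $D$ need not be order-reflecting), the paper is forced to prove that $D_n \subseteq D_m$ in $\tau_{\mathbb{B}(u)}$ implies $D_{(u,n)} \subseteq D_{(u,m)}$ in $\tau_{\mathbb{B}}$, and that is exactly where it invokes \cref{stalk lemma}. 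You instead use the adjunction form of the universal property, $\mathrm{Nat}(\lan_{D}\cod, H) \cong \mathrm{Nat}(\cod, H \circ D)$, so you only need naturality of $(s_n)$ over the factorization order on $\mathcal{D}_{\mathbb{B}(u)}$, where the required inclusion $D_{(u,n_2)} \subseteq D_{(u,n_1)}$ for $n_1 \leq n_2$ is immediate from functoriality of pushout along the $\rho^u_x$ --- no appeal to the stalk lemma is needed. Your route is therefore somewhat cleaner and more economical, and your flagged ``main obstacle'' (the naturality of $(s_n)$) is in fact the easy part; what the paper's version buys is an explicit description of the comparison map as an inclusion of colimit indices, which it then reuses when computing the stalks of $\widetilde{\mathbb{B}}$ in the proposition that follows. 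The remaining ingredients --- the identity $p_u^{-1}(D_n) = D_{(u,n)}$, the fact that restriction to an open subspace and direct image preserve the sheaf condition, and the final factorization through the sheafification unit --- match the paper's argument.
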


\begin{proof}
This can be tested open-wisely on the basis for each $ n \in \mathcal{D}_{\mathbb{B}(u)}$, and at the level of the structural presheaves: indeed we have $p_u^{-1}(D_n) = D_{(u,n)}$ and we have 
\[ \overline{\mathbb{B}(u)}(D_n) =  \underset{D_{n} \subseteq D_m}{\colim } \cod(m)\]
But observe that $D_n \subseteq D_m$ implies that $ D_{(u,n)} \subseteq D_{(u,m)}$: indeed, the first condition says that for any $ \zeta \in X_{\mathbb{B}(u)}$, $ n \leq \zeta$ if and only if $ m \leq \zeta$; but then, as for any $ x \in u$ and any $ \xi \in X_{\mathbb{B}\mid_x}$, by \cref{stalk lemma} we have that $\xi = \colim_{x \in v} {\rho^v_x}_*U(A_{\xi \rho^v_x}) $, then $n \leq \eta^{A_\xi}_{\xi \rho^u_x}$ implies $m \leq \eta^{A_\xi}_{\xi \rho^u_x}$, but this condition also means that $ {\rho^u_x}_*n \leq \xi $ implies $ {\rho^u_x}_*m \leq \xi $. Hence the implication. But now recall that the restriction of $\overline{\mathbb{B}} $ has the same values as $ \overline{\mathbb{B}}$ itself for opens of $ D_{(u, 1_{\mathbb{B}(u)}) }$. But $  \overline{\mathbb{B}}$ is itself computed as 
\[  \overline{\mathbb{B}}(D_{(u,n)}) = \underset{D_{(u,n)} \subseteq D_{(v,m)}}{\colim } \cod(m)\]  
Hence the first colimit ranges over a subset of the indexing set of the second colimit, inducing a canonical arrow 
\[  \overline{\mathbb{B}(u)}(D_n) \stackrel{(\overline{p_u})_n}{\longrightarrow} \overline{\mathbb{B}}(D_{(u,n)})  \]
and those arrows, being induced from universal properties, define altogether a natural transformation. Now the desired $ \widetilde{p_u}^\sharp$ is the induced morphism of sheaves after sheafification. 
\end{proof}

Now let us look at the local behavior of the structural sheaf. 

\begin{proposition}
The stalk of $\widetilde{ \mathbb{B}}$ at $ (x,\xi)$ is given as
\[ \widetilde{ \mathbb{B}}\!\mid_{(x,\xi)} = U(A_\xi) \]
\end{proposition}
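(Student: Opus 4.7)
Since sheafification preserves stalks and the basic opens $D_{(u,n)}$ form a basis of $\tau_\mathbb{B}$ closed under finite intersection, one has
$$\widetilde{\mathbb{B}}\!\mid_{(x,\xi)}\;\simeq\;\overline{\mathbb{B}}\!\mid_{(x,\xi)}\;=\;\underset{(u,n)\,:\,(x,\xi) \in D_{(u,n)}}{\colim}\, \overline{\mathbb{B}}(D_{(u,n)})\;=\;\underset{(u,n)\,:\,(x,\xi) \in D_{(u,n)}}{\colim}\;\underset{D_{(u,n)} \subseteq D_{(v,m)}}{\colim}\cod(m),$$
and, exactly as in the single-object case of \cref{structuralsheaf}, the diagonal $(u,n)=(v,m)$ is cofinal in this double colimit, so it collapses to
$$\overline{\mathbb{B}}\!\mid_{(x,\xi)} \;\simeq\; \underset{(v,m)\,:\,(x,\xi) \in D_{(v,m)}}{\colim}\, \cod(m).$$

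Next, the condition $(x,\xi) \in D_{(v,m)}$ unfolds as $x \in v$, $m \in \mathcal{D}_{\mathbb{B}(v)}$, and ${\rho^v_x}_* m \leq \xi$; this last condition is equivalent to $m \leq \eta^{A_\xi}_{\xi\rho^v_x}$ in $\mathcal{D}_{\mathbb{B}(v)}$ (one direction is functoriality of pushout, the other uses the factorization of $\xi\rho^v_x$ through the local unit $\eta^{A_\xi}_{\xi\rho^v_x}$ together with the diagonal universality of $m$ against the image of $U$). Splitting the index over $v \ni x$ and applying Diers's condition to $\mathbb{B}(v)$ at the local unit $\eta^{A_\xi}_{\xi\rho^v_x}$ identifies the inner colimit with $U(A_{\xi\rho^v_x})$, giving
$$\overline{\mathbb{B}}\!\mid_{(x,\xi)} \;\simeq\; \underset{x \in v}{\colim}\, U(A_{\xi\rho^v_x}).$$

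Finally, \cref{stalk lemma} exhibits $U(A_\xi)$ as $\colim_{x \in v}\, {\rho^v_x}_* U(A_{\xi\rho^v_x})$ via the cocone $(U(L_{A_\xi}(\xi\rho^v_x)))_{x \in v}$; since $\mathbb{B}\!\mid_x = \colim_{x \in v}\mathbb{B}(v)$, interchanging the pushout with the filtered colimit over $v$ absorbs the $\mathbb{B}\!\mid_x$-summand of each $U(A_{\xi\rho^v_x}) +_{\mathbb{B}(v)} \mathbb{B}\!\mid_x$ (the pushout is taken along what becomes an identity in the colimit), simplifying to $\colim_{x \in v}\, U(A_{\xi\rho^v_x}) \simeq U(A_\xi)$. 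Chaining the three identifications yields the result. The main delicacies are the two colimit manipulations: the cofinality collapse of the first step (formally identical to the argument for the single-object structural sheaf) and the pushout--filtered colimit interchange of the last step, both of which rest on the fundamental identity $\mathbb{B}\!\mid_x = \colim_{x \in v}\mathbb{B}(v)$.
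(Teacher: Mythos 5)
Your proof is correct and follows essentially the same route as the paper's: compute the stalk of the structural presheaf as an iterated colimit, collapse it by cofinality exactly as in the single-object case, identify the inner colimits with $U(A_{\xi\rho^v_x})$ via Diers's condition, and conclude with \cref{stalk lemma}. The only divergence is bookkeeping: the paper carries the pushforwards ${\rho^v_x}_*$ through the cofinality step so that \cref{stalk lemma} applies verbatim, whereas you drop them and then bridge the resulting gap between $\underset{x\in v}{\colim}\, U(A_{\xi\rho^v_x})$ and $\underset{x\in v}{\colim}\, {\rho^v_x}_*U(A_{\xi\rho^v_x})$ with the (correct) pushout--filtered-colimit absorption argument based on $\mathbb{B}\!\mid_x = \underset{x\in v}{\colim}\,\mathbb{B}(v)$.
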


\begin{proof}
We just have to compute the stalk of the structural presheaf $ \overline{\mathbb{B}}$ at $(x, \xi)$: 
\[ \overline{\mathbb{B}}\!\mid_{(x, \xi)} \simeq \underset{x \in u \atop {\rho^u_x}_*m \leq \xi}{\colim}\, \overline{\mathbb{B}}(D_{(u,n)}) = \underset{x \in u \atop {\rho^u_x}_*m \leq \xi}{\colim}\, \underset{D_{(u,n)} \subseteq D_{(v,m)}}{\colim} \cod(m) \]
But we have that the set of all the $ D(u, {\rho^u_v}_*m)$ for $u \subseteq v$ and $ m \in \mathcal{D}_{\mathbb{B}(v)}$ is cofinal in the indexing set of the inner colimit, so we have 
\[ \overline{\mathbb{B}}\!\mid_{(x, \xi)} \simeq \,  \underset{x \in u \atop {\rho^u_x}_*m \leq \xi}{\colim}\, \underset{D_{(u,n)} \subseteq D_{(u,{\rho^v_u}_*m)} \atop u \leq v, \;m \in \mathcal{D}_{\mathbb{B}(v)}} {\colim}\, \cod({\rho^u_x}_*m) \]
But in one hand we have $ \cod({\rho^u_x}_*m) = {\rho^u_x}_*(\cod(m))$, and in the other hand, the indexing set above is cofinal in the set of all $D_{(u,n)}$ with $ x \in u$ and ${\rho^u_x}_*n \leq \xi $: this entails that
\[ \overline{\mathbb{B}}\!\mid_{(x, \xi)} \simeq \underset{x \in u}{\colim}\, \underset{{\rho^u_x}_*m \leq \xi}{\colim} \, {\rho^u_x}_*(\cod(m)) \]
But by \cref{stalk lemma}, we know that this later colimit is $ U(A_\xi)$. 
\end{proof}

\begin{remark}
To come back to the comparison morphism $ \widetilde{p_u}^\sharp$, we can see now that the corresponding inverse image part $ \widetilde{p_u}^\flat$ is obtained from the universal property of the colimit in the square 
\[\begin{tikzcd}
	{\widetilde {\mathbb{B}(u)}(D_n)} & {\widetilde{\mathbb{B}}(D_{(u,n)})} \\
	{U(A_{\xi \rho^u_x})} & {U(A_\xi)}
	\arrow["{\eta^{A_\xi}_{\xi \rho^u_x}}"', from=1-1, to=2-1]
	\arrow["{\widetilde{p_u}^\flat_\xi \atop= U(L_{A_\xi}(\xi \rho^u_x))}"', from=2-1, to=2-2]
	\arrow["{\widetilde{p_u}^\sharp_n}", from=1-1, to=1-2]
	\arrow["{\rho^{D_{(u,n)}}_\xi}", from=1-2, to=2-2]
\end{tikzcd}\]
\end{remark}

Now we can also compare the structural sheaf $ \widetilde{\mathbb{B}}$ and the structural sheaves $ \widetilde{\mathbb{B}\!\mid_x}$:

\begin{lemma}\label{stalkrestriction}
For each $ (x, \xi) \in X_\mathbb{B}$, we have an isomorphism
\[ \iota_x^*\widetilde{\mathbb{B}} \simeq \widetilde{\mathbb{B}\!\mid_x} \]
\end{lemma}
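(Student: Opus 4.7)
The plan is to construct a morphism of sheaves $\phi : \iota_x^*\widetilde{\mathbb{B}} \to \widetilde{\mathbb{B}\!\mid_x}$ and verify that $\phi$ is a stalkwise isomorphism; since a stalkwise isomorphism between sheaves on the same space is an isomorphism, this will yield the claim. Both sides are sheaves over $(X_{\mathbb{B}\mid_x}, \tau_{\mathbb{B}\mid_x})$, and the previous proposition already computes $(\iota_x^*\widetilde{\mathbb{B}})\!\mid_\xi = \widetilde{\mathbb{B}}\!\mid_{(x,\xi)} = U(A_\xi)$, while the analogous proposition from Section~1 gives $\widetilde{\mathbb{B}\!\mid_x}\!\mid_\xi = U(A_\xi)$, so the stalk values already match; only the coherence along opens remains.

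First I would construct $\phi$ at the presheaf level, then sheafify. By the universal property of the Kan extension defining $\iota_x^*\overline{\mathbb{B}}$, it suffices to produce, for each $(u,n)$ with $x \in u$ and $n \in \mathcal{D}_{\mathbb{B}(u)}$, a map $\overline{\mathbb{B}}(D_{(u,n)}) \rightarrow \overline{\mathbb{B}\!\mid_x}(D_{{\rho^u_x}_*n})$, using that $\iota_x^{-1}(D_{(u,n)}) = D_{{\rho^u_x}_*n}$. Using the colimit expression $\overline{\mathbb{B}}(D_{(u,n)}) = \colim_{D_{(u,n)} \subseteq D_{(v,m)}} \cod(m)$, each term $\cod(m)$ is sent to $\cod({\rho^v_x}_*m) = {\rho^v_x}_*\cod(m)$ through the pushout structure. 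The inclusion $D_{(u,n)} \subseteq D_{(v,m)}$ entails $D_{{\rho^u_x}_*n} \subseteq D_{{\rho^v_x}_*m}$ by exactly the argument used in the proof of Lemma~\ref{restrictions}, so these terms sit in the diagram computing $\overline{\mathbb{B}\!\mid_x}(D_{{\rho^u_x}_*n})$, producing the desired cocone. Composing with $\gamma$ and sheafifying yields $\phi$.

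Next I would verify that $\phi$ is a stalkwise isomorphism at each $\xi \in X_{\mathbb{B}\!\mid_x}$. Unwinding the colimit computations on both sides, the stalk of $\iota_x^*\overline{\mathbb{B}}$ at $\xi$ is indexed by pairs $(u,n)$ with $x \in u$ and ${\rho^u_x}_*n \leq \xi$, while the stalk of $\overline{\mathbb{B}\!\mid_x}$ at $\xi$ is indexed by $n' \in \mathcal{D}_{\mathbb{B}\mid_x}$ with $n' \leq \xi$. The lemma preceding Theorem~\ref{stalk lemma} ensures that every such $n'$ is of the form ${\rho^u_x}_*n$ for some $(u,n)$, and Theorem~\ref{stalk lemma} itself guarantees that the filtered diagram of such $(u,n)$ presents exactly $\xi$, hence after passing to filtered colimits both sides compute $U(A_\xi)$ and $\phi$ induces the canonical identity. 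Since $\phi$ is then a stalkwise isomorphism of sheaves, it is an isomorphism.

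The main obstacle is ensuring well-definedness in the construction of $\phi$, since a given $n' \in \mathcal{D}_{\mathbb{B}\mid_x}$ admits many presentations as ${\rho^u_x}_*n$; this is circumvented by never choosing a representative and instead working with the full filtered diagram of all admissible $(u,n)$, and exploiting cofinality exactly as in the proof of the previous proposition, where the same indexing was used to identify $\overline{\mathbb{B}}\!\mid_{(x,\xi)}$ with $U(A_\xi)$. Once this is set up, the remainder is a bookkeeping of cofinality manipulations parallel to those of the stalk computation above.
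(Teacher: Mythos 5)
Your proposal is correct and follows essentially the same route as the paper: build a canonical comparison morphism from the Kan-extension presentations of the two presheaves, then verify it is a stalkwise isomorphism using the stalk computations (the preceding proposition and the Section~1 result) together with the cofinality supplied by \cref{stalk lemma}, and conclude since $X_{\mathbb{B}\mid_x}$ has enough points. The only (inessential) difference is that the paper first constructs the direct-image-side morphism $\widetilde{\mathbb{B}} \to {\iota_x}_*\widetilde{\mathbb{B}\!\mid_x}$ from a cocone $\cod \Rightarrow \widetilde{\mathbb{B}\!\mid_x}\circ\iota_x^{-1}\circ D$ and then passes to its mate, whereas you build the inverse-image-side map $\iota_x^*\widetilde{\mathbb{B}} \to \widetilde{\mathbb{B}\!\mid_x}$ directly.
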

\begin{proof}
For each $ u \in \tau$ such that $ x \in u$ and each $n \in \mathcal{D}_{\mathbb{B}(u)}$ we have a canonical map \[ \sigma_{(u,n)}: \cod(n) \rightarrow \widetilde{\mathbb{B}\!\mid_x}(D_{{\rho^u_x}_*n})\] provided by the composite of the right vertical maps 
\[\begin{tikzcd}
	{\mathbb{B}(u)} & {C} \\
	{\mathbb{B}\!\mid_x} & {{\rho^u_x}_*C} \\
	& {\widetilde{\mathbb{B}\!\mid_x}(D_{{\rho^u_x}_*n})}
	\arrow["{\rho^u_x}"', from=1-1, to=2-1]
	\arrow["{{\rho^u_x}_*n}", from=2-1, to=2-2]
	\arrow["{n}", from=1-1, to=1-2]
	\arrow[from=1-2, to=2-2]
	\arrow["\lrcorner"{very near start, rotate=180}, from=2-2, to=1-1, phantom]
	\arrow[from=2-1, to=3-2]
	\arrow["{\gamma_{{\rho^u_x}_*n}\zeta^{\mathbb{B}\!\mid_x}_{{\rho^u_x}_*n}}", from=2-2, to=3-2]
\end{tikzcd}\]
In the case where $ x \notin u $, then $\iota_x^{-1}(u)=\emptyset$ and then $ \widetilde{\mathbb{B}\!\mid_x}(\emptyset)=1$ is always the terminal object of $\mathcal{B}$: then for each $ n \in \mathcal{D}_{\mathbb{B}(u)}$ put $ \sigma_{(u,n)} = !_{\cod(n)}$. The data of the $(\sigma_{(u,n)})_{(u,n) \in \mathcal{D}_\mathbb{B}} $ define a natural transformation $ \sigma : \cod \Rightarrow \overline{\mathbb{B}}$ and we can exploit the universal property of the left Kan extension to deduce a natural transformation $ \overline{\mathbb{B}} \rightarrow {\iota_x}_*\widetilde{\mathbb{B}\!\mid_x}$. 
\[\begin{tikzcd}
	{\mathcal{D}_\mathbb{B}} && {\mathcal{B}} & {} \\
	{\tau_\mathbb{B}^{\op}} && {} \\
	{\tau_{\mathbb{B}\!\mid_x}^{\op}} && {}
	\arrow["{D}"', from=1-1, to=2-1]
	\arrow["{\cod}"{name=0}, from=1-1, to=1-3]
	\arrow["{\overline{\mathbb{B}}}"{name=1, description}, from=2-1, to=1-3]
	\arrow["{\iota_x^{-1}}"', from=2-1, to=3-1]
	\arrow["{\widetilde{\mathbb{B}\!\mid_x}}"{name=2, swap}, from=3-1, to=1-3, curve={height=12pt}]
	\arrow[Rightarrow, "{\zeta^{\mathbb{B}}}"', from=0, to=1, shorten <=2pt, shorten >=2pt]
	\arrow[Rightarrow, from=1, to=2, shift right=1, shorten <=3pt, shorten >=3pt, dashed]
	\arrow[Rightarrow, from=0, to=2, shift right=5, curve={height=6pt}, shorten <=4pt, shorten >=4pt, "\sigma"'{near start}, crossing over]
\end{tikzcd}\]
which factorizes through the sheafification $ \widetilde{\mathbb{B}}$. The mate of this defines a natural map $\iota_x^*\widetilde{\mathbb{B}} \rightarrow \widetilde{\mathbb{B}\!\mid_x}$, but we have at each point $\xi$ of $X_{ \mathbb{B}\!\mid_x}$ an isomorphism \[\iota_x^*\widetilde{\mathbb{B}}\mid_\xi \simeq \widetilde{\mathbb{B}}\mid_{(x,\xi)} \simeq U(A_\xi) \simeq\widetilde{\mathbb{B}\!\mid_x}\mid_\xi  \]
But as $ X_{\mathbb{B}\!\mid_x}$ has enough points, this suffices to have isomorphism of sheaves.
\end{proof}

\begin{definition}
The spectrum of a $\mathcal{B}$-space $ ((X,\tau), \mathbb{B})$ is the $ U$-space 
\[ ((X_\mathbb{B}, \tau_\mathbb{B}), \widetilde{\mathbb{B}}, (A_\xi)_{(x,\xi) \in X_\mathbb{B}})\] 
\end{definition}

Now for a morphism of $ (f, \phi) : \mathcal{B}$-space $((X_1,\tau_1), \mathbb{B}_1) \rightarrow ((X_2,\tau_2), \mathbb{B}_2)$, the inverse and direct image parts of $\phi$ are defined respectively on points $(x,\xi) \in X_{\mathbb{B}_2}$ and basic opens $ D_{(u,n)}$ of $ \tau_{\mathbb{B}_1}$ by factorization and puhsouts as follows
\[\begin{tikzcd}[sep=large]
	{\mathbb{B}_1\!\mid_{f(x)}} & {\mathbb{B}_2\!\mid_{x}} \\
	{U(A_{\xi \phi^\flat_x})} & {U(A_\xi)}
	\arrow["{n_{\xi \phi^\flat_x}}"', from=1-1, to=2-1]
	\arrow["{\xi}", from=1-2, to=2-2]
	\arrow["{\phi^\flat_x}", from=1-1, to=1-2]
	\arrow["{U(L_{A_\xi}(\xi \phi^\flat_x))}"', from=2-1, to=2-2]
\end{tikzcd} \quad \quad
\begin{tikzcd}[sep=large]
	{\mathbb{B}_1(u)} & {\mathbb{B}_2(f^{-1}(u))} \\
	{C} & {{\phi^\sharp_u}_*C}
	\arrow["{{\phi^\sharp_u}_*n}", from=1-2, to=2-2]
	\arrow["{\phi^\sharp_u}", from=1-1, to=1-2]
	\arrow["{n}"', from=1-1, to=2-1]
	\arrow["{n^*\phi^\sharp_u}"', from=2-1, to=2-2]
	\arrow["\lrcorner"{very near start, rotate=180}, from=2-2, to=1-1, phantom]
\end{tikzcd} \]

\begin{proposition}
We have a morphism of $U$-spaces
\[ \begin{tikzcd}
{((X_{\mathbb{B}_1}, \tau_{\mathbb{B}_2}),\widetilde{\mathbb{B}_1}, (A_\xi)_{(x,\xi) \in X_{\mathbb{B}_1}})} &{((X_{\mathbb{B}_2}, \tau_{\mathbb{B}_2}),\widetilde{\mathbb{B}_2}, (A_\xi)_{(x,\xi) \in X_{\mathbb{B}_2}})} 	\arrow[from=1-1, to=1-2]
\end{tikzcd} \]
whose underlying continuous map is
\[\begin{tikzcd}[row sep=tiny]
	{(X_{\mathbb{B}_2}, \tau_{\mathbb{B}_2})} & {(X_{\mathbb{B}_1}, \tau_{\mathbb{B}_2})} \\
	{(x,\xi)} & {(f(x), n_{\xi \phi^\flat_x})}
	\arrow["{\Spec(\phi)}", from=1-1, to=1-2]
	\arrow[from=2-1, to=2-2, shorten <=2pt, shorten >=2pt, maps to]
\end{tikzcd}\]
which is moreover spectral, and where the sheaf morphism $\widetilde{\phi}$ has respectively the following inverse and direct image parts at points $(x,\xi) \in X_{\mathbb{B}_2}$ and basic opens $ D_{(u,n)}$ of $ \tau_{\mathbb{B}_1}$
\[ \widetilde{\phi}^{\flat}_{(x,\xi)} = L_A(\xi \phi^\flat_x) \quad \quad \widetilde{\phi}^{\sharp}_{D_{(u,n)}} = \gamma_{\mathbb{B}_2} \zeta^{\mathbb{B}_2}_{D_{(u,n)}} n^*\phi^\sharp_u  \]
where $ \gamma_{\mathbb{B}_2} : \lan_{D_{\mathbb{B}_2}} \cod \pi_1 \Rightarrow \widetilde{\mathbb{B}_2}$ is the sheafification map and $ \zeta^{\mathbb{B}_2} : \cod \pi_1 \Rightarrow \lan_{D_{\mathbb{B}_2}} \cod \pi_1$ is the canonical natural transformation of the left Kan extension. 
\end{proposition}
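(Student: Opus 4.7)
The plan is to split the statement into four sub-claims: (i) the set-theoretic map $\Spec(\phi)$ is well-defined, (ii) it is continuous and restricts to basic opens (spectrality), (iii) the proposed formulas for $\widetilde{\phi}^\sharp$ and $\widetilde{\phi}^\flat$ assemble into a genuine morphism of sheaves, and (iv) the stalk condition required for a morphism of $U$-spaces holds. The structure parallels the one-object case treated in \cref{Dierstheorem}, with the main new ingredient being the compatibility square relating the direct and inverse image parts of $\phi$.

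For (i), observe that the morphism of $\mathcal{B}$-spaces gives at each $x \in X_2$ a stalk map $\phi^\flat_x : \mathbb{B}_1\!\mid_{f(x)} \to \mathbb{B}_2\!\mid_x$. The composite $\xi \phi^\flat_x : \mathbb{B}_1\!\mid_{f(x)} \to U(A_\xi)$ then factors uniquely through a local unit $n_{\xi \phi^\flat_x}$ under $\mathbb{B}_1\!\mid_{f(x)}$ by the multi-adjoint property, so $(f(x), n_{\xi\phi^\flat_x})$ lies in $X_{\mathbb{B}_1}$. For (ii), the key remark is that for any $x \in X_2$ and $u \in \tau_1$ with $f(x) \in f^{-1}(u)$, the square
\[\begin{tikzcd}
\mathbb{B}_1(u) \arrow[r, "\phi^\sharp_u"] \arrow[d, "\rho^u_{f(x)}"'] & \mathbb{B}_2(f^{-1}(u)) \arrow[d, "\rho^{f^{-1}(u)}_x"] \\
\mathbb{B}_1\!\mid_{f(x)} \arrow[r, "\phi^\flat_x"'] & \mathbb{B}_2\!\mid_x
\end{tikzcd}\]
commutes by the standard relation between direct and inverse image. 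Pasting pushouts along $n$ then yields $(\phi^\flat_x)_*(\rho^u_{f(x)})_* n = (\rho^{f^{-1}(u)}_x)_*(\phi^\sharp_u)_* n$, from which one reads off that $(\rho^u_{f(x)})_* n \leq n_{\xi\phi^\flat_x}$ iff $(\rho^{f^{-1}(u)}_x)_*(\phi^\sharp_u)_* n \leq \xi$. This gives the clean formula $\Spec(\phi)^{-1}(D_{(u,n)}) = D_{(f^{-1}(u),\, (\phi^\sharp_u)_* n)}$, which proves continuity and, since the right-hand side is itself basic, spectrality.

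For (iii), one sets $\sigma_{(u,n)} := \gamma_{\mathbb{B}_2} \zeta^{\mathbb{B}_2}_{D_{(u,n)}}\, n^*\phi^\sharp_u : \cod(n) \to \widetilde{\mathbb{B}_2}(\Spec(\phi)^{-1}(D_{(u,n)}))$ and checks that the family $(\sigma_{(u,n)})_{(u,n) \in \mathcal{D}_{\mathbb{B}_1}}$ is natural in $(u,n)$; naturality in the $u$-variable uses the naturality squares of $\phi^\sharp$ against the restrictions $\rho^v_u$, while naturality in $n$ is automatic from the universal property of pushouts together with the naturality of $\zeta^{\mathbb{B}_2}$. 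This produces a natural transformation $\cod\pi_1 \Rightarrow \Spec(\phi)_* \widetilde{\mathbb{B}_2}$, and since $\Spec(\phi)_*\widetilde{\mathbb{B}_2}$ is a sheaf on $(X_{\mathbb{B}_1}, \tau_{\mathbb{B}_1})$, the universal property of the Kan extension $\lan_{\iota_{\mathbb{B}_1}}\cod\pi_1$ followed by sheafification induces the required direct image part $\widetilde{\phi}^\sharp$, with $\widetilde{\phi}^\flat$ obtained as its mate under $\Spec(\phi)^* \dashv \Spec(\phi)_*$.

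For (iv), one computes the stalk of $\widetilde{\phi}^\flat$ at $(x,\xi)$ using the method of the proposition identifying stalks of $\widetilde{\mathbb{B}}$: the codomain stalk is $U(A_\xi)$, the domain stalk is $U(A_{n_{\xi\phi^\flat_x}})$, and the induced map between them is the filtered colimit over $(u,n)$ with $x \in u$ and $(\rho^u_{f(x)})_* n \leq n_{\xi\phi^\flat_x}$ of the maps $\rho^{\Spec(\phi)^{-1}(D_{(u,n)})}_{(x,\xi)}\, \sigma_{(u,n)}$, which by construction are exactly the legs of the factorization defining $L_{A_\xi}(\xi \phi^\flat_x)$. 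Applying $U$ and invoking uniqueness of factorizations through local units yields $\widetilde{\phi}^\flat_{(x,\xi)} = U(L_{A_\xi}(\xi\phi^\flat_x))$, which provides the family $(u_{(x,\xi)})_{(x,\xi) \in X_{\mathbb{B}_2}}$ needed to complete the morphism of $U$-spaces. The main obstacle I anticipate is step (iii), specifically verifying the naturality condition on the family $(\sigma_{(u,n)})$ under the interaction of $\phi^\sharp$ with pushouts, and then ensuring the resulting transformation truly factors through the sheafification in a way compatible with the stalk identification of step (iv); all other steps are straightforward once the compatibility square above is invoked systematically.
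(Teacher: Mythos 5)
Your proposal is correct and, on the one point the paper actually argues, takes exactly the same route: the paper's entire proof is the identity $\Spec(\phi)^{-1}(D_{(u,n)}) = D_{(f^{-1}(u),\,(\phi^\sharp_u)_*n)}$, which is precisely your step (ii), obtained the same way from the compatibility square between $\phi^\sharp_u$ and $\phi^\flat_x$ and the pasting of pushouts. Your steps (i), (iii) and (iv) are left implicit by the paper but are correct and follow the same Kan-extension-then-sheafify and stalk-computation template the paper uses for the analogous constructions in Section 1 and in the proof of the spectral adjunction, so they constitute a faithful completion rather than a divergence.
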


\begin{proof}
For a basic open $ D_{(u,n)}$ in $\tau_{\mathbb{B}_1}$ we have $\Spec (\phi)^{-1}(D_{(u,n)}) = D_{(f^{-1}(u), {\phi^{\sharp}_u}_*n)}$ which is a basic open of $ \tau_{\mathbb{B}_2}$. 
\end{proof}

\begin{theorem}\label{generalizedadjunction}
We have an adjunction 
\[\begin{tikzcd}
	{U-Spaces} && {\mathcal{B}-Spaces}
	\arrow["{\iota_u}"{name=0, swap}, from=1-1, to=1-3, curve={height=20pt}]
	\arrow["{\Spec}"{name=1, swap}, from=1-3, to=1-1, curve={height=20pt}]
	\arrow["\dashv"{rotate=-90}, from=1, to=0, phantom]
\end{tikzcd}\]
\end{theorem}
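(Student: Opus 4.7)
The plan is to follow the same overall strategy as in the proof of \cref{Dierstheorem}, of which this is a direct generalization: the case of a general $\mathcal{B}$-space reduces pointwise to the multi-adjunction factorization at each stalk, and sheaves are reconstructed by Kan extension followed by sheafification. Concretely, I would establish the adjunction by constructing a natural bijection between
$$U-Spaces(\Spec((X,\tau), \mathbb{B}), ((Y,\sigma), \mathbb{A}, (A_y)_{y \in Y}))$$
and $\mathcal{B}-Spaces(((X,\tau), \mathbb{B}), \iota_U((Y,\sigma), \mathbb{A}, (A_y)_{y \in Y}))$, with $\Spec$ as left adjoint.

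Given a morphism of $\mathcal{B}$-spaces $(f,\phi)$, for each $y \in Y$ the multi-adjunction factorization of $\phi^\flat_y : \mathbb{B}\!\mid_{f(y)} \to \mathbb{A}\!\mid_y = U(A_y)$ produces a candidate $\xi_y = \eta^{A_y}_{\phi^\flat_y}$ together with a map $u_y = L_{A_y}(\phi^\flat_y) : A_{\xi_y} \to A_y$ in $\mathcal{A}$. I would then set $g(y) = (f(y), \xi_y)$ and check continuity: membership $y \in g^{-1}(D_{(u,n)})$ amounts to $f(y) \in u$ together with $\{\rho^u_{f(y)}\}_* n \leq \xi_y$, which translates via $\phi^\sharp$ into a factorization of $q^{f^{-1}(u)}_y \phi^\sharp_u$ through ${\phi^\sharp_u}_*n$ at the stalk. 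Finite presentability of $n$ (hence of ${\phi^\sharp_u}_*n$) together with the filtered colimit decomposition of $\mathbb{A}\!\mid_y = \colim_{y \in v} \mathbb{A}(v)$ lets this factorization descend to some open neighborhood $v_y \subseteq f^{-1}(u)$ of $y$; the usual cofinality argument (as in \cref{Dierstheorem}) shows the $v_y$ cover $g^{-1}(D_{(u,n)})$, which is therefore open.

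Next I would construct the sheaf morphism $\psi^\sharp : \widetilde{\mathbb{B}} \to g_*\mathbb{A}$. The local factorizations $s_{n,y} : \cod(n) \to \mathbb{A}(v_y)$ glue via the sheaf condition on $\mathbb{A}$ over the cover $\{v_y\}_y$ of $g^{-1}(D_{(u,n)})$ into a single $s_{(u,n)} : \cod(n) \to \mathbb{A}(g^{-1}(D_{(u,n)})) = g_*\mathbb{A}(D_{(u,n)})$, with the equalizing verification reducing again to finite presentability of $n$. Naturality in $(u,n)$ yields a natural transformation $\cod\pi_1 \Rightarrow g_*\mathbb{A} \circ \iota_\mathbb{B}$, and the universal property of $\lan_{\iota_\mathbb{B}} \cod\pi_1$ followed by sheafification (using that $g_*\mathbb{A}$ is already a sheaf) produces $\psi^\sharp$. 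Taking the mate under $g^* \dashv g_*$ and computing at the stalks via $\widetilde{\mathbb{B}}\!\mid_{(f(y),\xi_y)} \simeq U(A_{\xi_y})$ recovers $\psi^\flat_y = U(u_y)$, so the triple $(g, \psi, (u_y)_{y \in Y})$ is a legitimate morphism of $U$-spaces.

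The inverse assignment is obtained by postcomposing $g : Y \to X_\mathbb{B}$ with the canonical projection $\eta : X_\mathbb{B} \to X$ to recover $f$, and by restricting $\psi^\sharp$ along the functor $u \mapsto D_{(u, 1_{\mathbb{B}(u)})}$ (composed with the canonical maps $\zeta^{\mathbb{B}}$ and the sheafification $\gamma$) to recover $\phi^\sharp$. Checking that these two processes are mutually inverse and natural in both arguments is straightforward once one notes that on basic opens of $\tau_\mathbb{B}$ the construction of $\psi^\sharp$ is forced by the Kan extension universal property, and at stalks the factorization is forced by the multi-adjunction. The main obstacle is the sheaf-theoretic bookkeeping in gluing pointwise factorizations across opens of $\tau_\mathbb{B}$; this is handled exactly as in \cref{Dierstheorem}, the crucial new ingredient being \cref{stalk lemma}, which guarantees that every finitely presented diagonally universal morphism over a stalk $\mathbb{B}\!\mid_x$ lifts to some neighborhood, so that the finite-presentability argument refining to a common cover continues to apply verbatim.
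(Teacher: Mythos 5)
Your proposal follows essentially the same route as the paper's proof: construct $g(y)=(f(y),\eta^{A_y}_{\phi^\flat_y})$ from the stalkwise multi-adjunction factorization, prove continuity via finite presentability of ${\phi^\sharp_u}_*n$ and the filtered colimit decomposition of the stalk, glue the local factorizations through the sheaf condition, and obtain $\psi^\sharp$ from the Kan extension universal property followed by sheafification, with $\psi^\flat$ recovered at stalks as $U(L_{A_y}(\phi^\flat_y))$. The only cosmetic slip is that the lifting of finitely presented diagonally universal morphisms from a stalk to a neighborhood is the unnamed lemma preceding \cref{stalk lemma}, while \cref{stalk lemma} itself is what controls the stalks in the inverse image computation; the substance of your argument matches the paper's.
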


\begin{proof}
Suppose we have a morphism of $\mathbb{B}$-spaces $ (f,\phi) : ((X,\tau), \mathbb{B}) \rightarrow \iota_U((Y,\sigma),\mathbb{A}, (A_y)_{y \in Y}) $. Then in each $ y \in Y$ we have a factorization
\[\begin{tikzcd}
	{\mathbb{B}\!\mid_{f(y)}} & {} & {U(A_y)} \\
	& {U(A_{\phi^\flat_y})}
	\arrow["{\phi^\flat_y}", from=1-1, to=1-3]
	\arrow["{\eta^{A_y}_{\phi^\flat_y}}"', from=1-1, to=2-2]
	\arrow["{U(L_{A_y}({\phi^\flat_y}))}"', from=2-2, to=1-3]
\end{tikzcd}\]
producing a local form under the stalk $ \mathbb{B}\!\mid_{f(y)}$ so we can define 
\[\begin{tikzcd}[row sep=tiny]
	{Y} & {X_\mathbb{B}} \\
	{y} & {(f(y), \eta^{A_y}_{\phi^\flat_y})}
	\arrow["{g}", from=1-1, to=1-2]
	\arrow[from=2-1, to=2-2, shorten <=2pt, shorten >=2pt, maps to]
\end{tikzcd}\]
From this point, the strategy is similar to the set valued case, though we have to work under the values of the structural sheaf under an open rather than just considering a map produced by global section. We prove that $ g $ is continuous. Let be $ u \in \tau$ and $ n \in \mathcal{D}_{\mathbb{B}(u)}$. We have that $g^{-1}(D_{(u,n)})$ consists of all those $y \in Y$ such that we have a factorization
\[\begin{tikzcd}
	{\mathbb{B}(u)} & {C} \\
	{\mathbb{B}\!\mid_{f(y)}} & {U(A_{\phi^\flat_y})}
	\arrow["{\eta^{A_y}_{\phi^\flat_y}}"', from=2-1, to=2-2]
	\arrow["{\rho^{u}_y}"', from=1-1, to=2-1]
	\arrow[from=1-2, to=2-2]
	\arrow["{n}", from=1-1, to=1-2]
\end{tikzcd}\]
Now for each such $y \in g^{-1}(D_{(u,n)})$ we have a factorization as the dashed arrow below
\[\begin{tikzcd}
	{\mathbb{B}(u)} && {\mathbb{A}(f^{-1}(u))} \\
	{C} && {{\phi^\sharp_u}_*C} \\
	& {U(A_{\phi^\flat_y})} && {U(A_y)}
	\arrow["{\phi^\sharp_u}", from=1-1, to=1-3]
	\arrow["{{\phi^\sharp_u}_*n}", from=1-3, to=2-3]
	\arrow["{n}"', from=1-1, to=2-1]
	\arrow["{n^*\phi^\sharp_u}" {near end} description, from=2-1, to=2-3]
	\arrow[from=2-1, to=3-2]
	\arrow["{s}"', from=2-3, to=3-4, dashed]
	\arrow[from=3-2, to=3-4, "U(L_{A_y}(\phi^\flat_y))"']
	\arrow["{\eta^{A_y}_{\phi^\flat_y}\rho^u_{f(y)}}" description, from=1-1, to=3-2, curve={height=-12pt}, crossing over]
	\arrow["{\rho^{f^{-1}(u)}_y}", from=1-3, to=3-4, curve={height=-12pt}]
\end{tikzcd}\]
But we have a filtered colimit \[ \rho^{f^{-1}}_y = \underset{y \in v \atop v \subseteq f^{-1}(u)}{\colim} \, \rho^{f^{-1}(u)}_v \] in the slice $ \mathbb{A}(f^{-1}(u))\downarrow \mathcal{B}$ for $\mathbb{A}$ is a sheaf and the set $ \{ v \in \sigma \mid \, v \subseteq f^{-1}(u), \, y \in v \}$ is cofinal in the set of neighborhood of $y$ in $\sigma$. Hence, for $ {\phi^\sharp_u}_*n$ is still finitely presented as a diagonally universal map, there is some neighborhood $v$ of $y$ in $\sigma$ such that $ v \subseteq f^{-1}(u)$ and the arrow $s$ factorizes through the restriction $ \rho^{v}_y$, and moreover, any two such factorizations can be equalized by a third one. But now any point $z$ in $v$ is also in $g^{-1}(D_{(u,n)})$: indeed, if we have a factorization as below
\[\begin{tikzcd}[row sep=large]
	{\mathbb{B}(u)} && {\mathbb{A}(f^{-1}(u))} \\
	{C} & {} & {{\phi^\sharp_u}_*C} & {\mathbb{A}(v)} \\
	& {\mathbb{B}\!\mid_{f(z)}} && {U(A_z)}
	\arrow["{\phi^\flat_z}", from=3-2, to=3-4]
	\arrow["{\phi^\sharp_u}", from=1-1, to=1-3]
	\arrow["{n}"', from=1-1, to=2-1]
	\arrow["{{\phi^\sharp_u}_*n}", from=1-3, to=2-3]
	\arrow[from=2-1, to=2-3]
	\arrow["{\rho^{f^{-1}(u)}_v}", from=1-3, to=2-4, curve={height=-12pt}]
	\arrow[from=2-3, to=2-4]
	\arrow["{\rho^v_z}", from=2-4, to=3-4]
	\arrow["{\rho^u_{f(z)}}"{pos=0.3, description}, from=1-1, to=3-2, curve={height=-12pt}, crossing over]
\end{tikzcd}\]
then ${\rho^u_{f(z)}}_*n$ factorizes $\phi^\flat_z$, and hence by Diers condition of the local unit of a morphism toward $U$ being the filtered colimit of all finitely presented diagonally universal maps factorizing it, we have that ${\rho^u_{f(z)}}_*n \leq \eta^{A_z}_{\phi^\flat_z}$, so that $z \in g^{-1}(D_{(u,n)}$ for $f(z) \in u$ and the last condition. Hence $v$ is included in $g^{-1}(D_{(u,n)})$ and as before such open $v$ could have be chosen for each point of $g^{-1}(D_{(u,n)})$, which is hence open: this ensures the continuity of $g$. Moreover observe that $ g^{-1}(D_{(u,n)}) \subseteq f^{-1}(u) = g^{-1}(D_{(u,1_{\mathbb{B}(u)})})$.  \\

The remains of the proof is similar to the proof of the point-based case: the opens $ v \in \sigma$ constructed above defined a covering of $g^{-1}(D_{(u,n)})$, and the universal property of the limit from the sheaf condition of $ \mathcal{A}$ at this cover provides a map as the dashed arrow below
\[\begin{tikzcd}
	{\mathbb{B}(u)} & {\mathbb{A}(f^{-1}(u))} \\
	{C} & {\mathbb{A}(g^{-1}(D_ {(u,n)}))}
	\arrow["{\rho^{f^{-1}(u)}_{g^{-1}(D_ {(u,n)})}}", from=1-2, to=2-2]
	\arrow["{\phi^\sharp_u}", from=1-1, to=1-2]
	\arrow["{n}"', from=1-1, to=2-1]
	\arrow["{\sigma_{(u,n)}}"', from=2-1, to=2-2, dashed]
\end{tikzcd}\]

Again, the data of all the $(\sigma_{(u,n)})_{u \in \tau, \, n \in \mathcal{D}_{\mathbb{B}(u)}}$ provide a natural transformation $ \sigma : \cod \pi_2 \Rightarrow g_*\mathbb{A}$ and we apply the property of left Kan extension
\[\begin{tikzcd}[column sep=large]
	{\mathcal{D}_\mathbb{B}} & {} & {\mathcal{B}} \\
	{\tau_{\mathbb{B}}^{op}} & {} \\
	{\sigma}
	\arrow["{D}"', from=1-1, to=2-1]
	\arrow["{g^{-1}}"', from=2-1, to=3-1]
	\arrow["{\cod \pi_2}"{name=0}, from=1-1, to=1-3]
	\arrow["{\lan_D\cod \pi_2}"{name=1, description}, from=2-1, to=1-3]
	\arrow["{\mathbb{A}}"{name=2, swap}, from=3-1, to=1-3, curve={height=12pt}]
	\arrow[Rightarrow, "{\zeta_\mathbb{B}}"', from=0, to=1, shorten <=2pt, shorten >=2pt]
	\arrow[Rightarrow, "{\sigma}"'{near start}, from=0, to=2, shift right=10, curve={height=6pt}, shorten <=4pt, shorten >=4pt, crossing over]
	\arrow[Rightarrow, "{\overline{\psi}}", from=1, to=2, shorten <=3pt, shorten >=3pt, dashed]
\end{tikzcd}\]
and again this $ \overline{\psi}$ factorizes through the sheafification $ \widetilde{\mathbb{B}} \rightarrow g_*\mathbb{A}$.  \\

Now concerning the inverse image part, take the mate of $ \psi^\sharp$ and apply again Diers condition and \cref{stalk lemma}. For any $ y \in Y$ and $(u,n) \in \mathcal{D}_{\mathbb{B}} $ with $ g(y) \in D_{(u,n)}$, the following square 
\[\begin{tikzcd}
	{\widetilde{\mathbb{B}}(D_{(u,n)}))} & {\mathbb{A}(g^{-1}(D_{(u,n)})} \\
	{\widetilde{\mathbb{B}}\!\mid_{g(y)}} & {U(A_y)}
	\arrow["{\rho^{g^{-1}(D_{(u,n)})}_{y}}", from=1-2, to=2-2]
	\arrow["{\psi^\sharp_{D_{(u,n)}}}", from=1-1, to=1-2]
	\arrow["{\rho^{D_{(u,n)}}_{g(y)}}"', from=1-1, to=2-1]
	\arrow["{\psi^\flat_y}"', from=2-1, to=2-2]
\end{tikzcd}\]
is part of the following diagram expressing the factorization through the unit
\[\begin{tikzcd}
	{\mathbb{B}(u)} & {\widetilde{\mathbb{B}}(D_{(u , 1_{\mathbb{B}(u)})})} & {\mathbb{A}(f^{-1}(u))} \\
	{} & {\widetilde{\mathbb{B}}(D_{(u,n)})} & {\mathbb{A}(g^{-1}(D_{(u,n)})} \\
	{\mathbb{B}\!\mid_{f(y)}} & {} & {} \\
	& {\widetilde{\mathbb{B}}\!\mid_{g(y)}} & {U(A_y)}
	\arrow["{\psi^\sharp_{D_{(u,n)}}}", from=2-2, to=2-3]
	\arrow["{\eta^\sharp_u}", from=1-1, to=1-2]
	\arrow["{\rho^{D_{(u , 1_{\mathbb{B}(u)})}}_{D_{(u,n)}}}"', from=1-2, to=2-2]
	\arrow["{\rho^{f^{-1}(u)}_{g^{-1}(D_{(u,n)}}}", from=1-3, to=2-3]
	\arrow["{\psi^\sharp_{D_{(u,1_{\mathbb{B}(u)})}}}", from=1-2, to=1-3]
	\arrow["{\eta^\flat_{f(y)}}"', from=3-1, to=4-2]
	\arrow["{\psi^\flat_y}"', from=4-2, to=4-3]
	\arrow["{\rho^{g^{-1}(D_{(u,n)})}_{y}}", from=2-3, to=4-3]
	\arrow["{\rho^{u}_{g(y)}}"', from=1-1, to=3-1]
	\arrow["{\phi^\flat_y}"{near end}, from=3-1, to=4-3, curve={height=-20pt}]
	\arrow["{\rho^{D_{(u,n)}}_{g(y)}}" description, from=2-2, to=4-2, crossing over]
	\arrow["{\phi^\sharp_u}", from=1-1, to=1-3, curve={height=-30pt}]
\end{tikzcd}\]
where the arrow $\psi^\flat_y$ is induced by the universal property of the filtered colimit 
\[ \widetilde{\mathbb{B}}\!\mid_{g(y)} = \underset{n \leq \eta^{A_y}_{\phi^\flat_y}}{\colim} \, \widetilde{ \mathbb{B}\! \mid_{f(y)}}(D_n) \simeq U(A_{ \phi^\flat_y}) \]
as the map $ U(L_{A_y}(\phi^\flat_y))$. 
\end{proof}

To conclude this part, let us explain how this adjunction is related to the adjunction existing at the level of the free coproduct completion as exposed in \cite{partI}. Recall that a right multi-adjoint $U : \mathcal{A} \rightarrow \mathcal{B}$ induced an adjunction
\[\begin{tikzcd}
	{\Pi\mathcal{A}} && {\Pi\mathcal{B}}
	\arrow["{\Pi U}"{name=0, swap}, from=1-1, to=1-3, curve={height=15pt}]
	\arrow["{L}"{name=1, swap}, from=1-3, to=1-1, curve={height=15pt}]
	\arrow["\dashv"{rotate=-90}, from=1, to=0, phantom]
\end{tikzcd}\]
Now we can define a ``functor of stalks" $ \pi_\mathcal{B} : \mathcal{B}-Spaces \rightarrow \Pi \mathcal{B} $ sending a $\mathcal{B}$-space $ ((X,\tau),\mathbb{B})$ on the family of stalks
\[\begin{tikzcd}
	{X} & {\mathcal{B}}
	\arrow["{(\mathbb{B}\mid_x)_{x \in X}}", from=1-1, to=1-2]
\end{tikzcd}\] and a morphism $ (f,\phi) : ((X_1, \tau_1),\mathbb{B}_1) \rightarrow ((X_2, \tau_2),\mathbb{B}_2)$ to 
\[\begin{tikzcd}
	{X_1} & {} \\
	&& {\mathcal{B}} \\
	{X_2}
	\arrow["{(\mathbb{B}_1\!\mid_x)_{x \in X_1}}"{name=0}, from=1-1, to=2-3]
	\arrow["{f}", from=3-1, to=1-1]
	\arrow["{(\mathbb{B}_2\!\mid_x)_{x \in X_2}}"{name=1, swap}, from=3-1, to=2-3]
	\arrow[Rightarrow, "{(\phi^\flat_x)_{x \in X_2}}"', from=0, to=1, shorten <=5pt, shorten >=5pt, shift right=7]
\end{tikzcd}\]   
Similarly, we can define a functor $ \pi_\mathcal{A} : U-Spaces \rightarrow \mathcal{A}$ sending a $U$-space on the specified family of $\mathcal{A}$-objects attached to its stalks, and a morphism of $U$-space to the specified maps in $\mathcal{A}$ attached to the inverse image part at the stalks. Then the following result is obvious from the way we defined the left adjoint $L \dashv \Pi U$ in \cite{partI}:

\begin{proposition}\label{Pro is discrete}
The following square satisfies the Beck-Chevalley condition
\[\begin{tikzcd}[row sep=large]
	{U-Spaces} & {\mathcal{B}-Spaces} \\
	{\Pi\mathcal{A}} & {\Pi\mathcal{B}}
	\arrow["{\pi_\mathcal{A}}"', from=1-1, to=2-1]
	\arrow["{\pi_\mathcal{B}}", from=1-2, to=2-2]
	\arrow["{L}"{name=0, swap}, from=2-2, to=2-1, curve={height=12pt}]
	\arrow["{\Spec}"{name=1, swap}, from=1-2, to=1-1, curve={height=12pt}]
	\arrow["{\Pi U}"{name=2, swap}, from=2-1, to=2-2, curve={height=12pt}]
	\arrow["{\iota_U}"{name=3, swap}, from=1-1, to=1-2, curve={height=12pt}]
	\arrow["\dashv"{rotate=-90}, from=0, to=2, phantom]
	\arrow["\dashv"{rotate=-90}, from=1, to=3, phantom]
\end{tikzcd}\]
that is we have both that $ \Pi U \pi_\mathcal{A} = \pi_\mathcal{B} \iota_U$ and $ \pi_\mathcal{A} \Spec = L \pi_\mathcal{B}$
\end{proposition}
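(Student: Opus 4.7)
The plan is to verify the two claimed equalities separately, both being essentially unfoldings of the definitions involved, though indexing conventions require some care.

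For the first equation $\Pi U \pi_\mathcal{A} = \pi_\mathcal{B} \iota_U$, I would argue as follows on objects: given a $U$-space $((X,\tau), \mathbb{A}, (A_x)_{x \in X})$, the functor $\iota_U$ simply forgets the distinguished family $(A_x)_{x \in X}$, and then $\pi_\mathcal{B}$ extracts the family of stalks $(\mathbb{A}\!\mid_x)_{x \in X}$; but by the very definition of a $U$-space one has $\mathbb{A}\!\mid_x = U(A_x)$, so this coincides with $\Pi U$ applied to $\pi_\mathcal{A}((X,\tau), \mathbb{A}, (A_x)_{x \in X}) = (A_x)_{x \in X}$. On morphisms, the natural transformation produced by $\pi_\mathcal{B} \iota_U$ at $x \in X_2$ is the inverse image part at the stalk $\phi^\flat_x$, which by the definition of a morphism of $U$-spaces is forced to be $U(u_x)$, exactly the image under $\Pi U$ of the natural transformation $(u_x)_{x \in X_2}$ produced by $\pi_\mathcal{A}$.

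For the second equation $\pi_\mathcal{A} \Spec = L \pi_\mathcal{B}$, I would first recall from \cite{partI}[Section 2] the explicit description of $L$: it sends a family $(B_x)_{x \in X} : X \rightarrow \mathcal{B}$ to the family indexed by $\coprod_{x \in X} X_{B_x}$ whose value at $(x,\xi)$ is the codomain object $A_\xi$ of the local unit $\xi : B_x \rightarrow U(A_\xi)$. Applied to $\pi_\mathcal{B}((X,\tau),\mathbb{B}) = (\mathbb{B}\!\mid_x)_{x \in X}$, this returns exactly the family $(A_\xi)_{(x,\xi) \in \coprod_x X_{\mathbb{B}\!\mid_x}}$. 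On the other hand, $\Spec((X,\tau),\mathbb{B})$ is the $U$-space $((X_\mathbb{B}, \tau_\mathbb{B}), \widetilde{\mathbb{B}}, (A_\xi)_{(x,\xi) \in X_\mathbb{B}})$, and we already established that the underlying set $X_\mathbb{B}$ is precisely the coproduct $\coprod_{x \in X} X_{\mathbb{B}\!\mid_x}$, so $\pi_\mathcal{A}$ extracts the same family.

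For morphisms, given $(f,\phi) : ((X_1,\tau_1),\mathbb{B}_1) \rightarrow ((X_2,\tau_2),\mathbb{B}_2)$, the functor $\pi_\mathcal{B}$ produces the natural transformation $(\phi^\flat_x)_{x \in X_2} : (\mathbb{B}_2\!\mid_x)_{x \in X_2} \Rightarrow (\mathbb{B}_1\!\mid_{f(x)})_{x \in X_2}$, to which $L$ assigns the morphism of families in $\Pi \mathcal{A}$ whose underlying map on indexing sets sends $(x,\xi) \in \coprod_x X_{\mathbb{B}_2\!\mid_x}$ to $(f(x), \eta^{A_\xi}_{\xi \phi^\flat_x}) = (f(x), n_{\xi\phi^\flat_x})$ and whose attached arrow in $\mathcal{A}$ is $L_{A_\xi}(\xi \phi^\flat_x)$. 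Comparing with the proposition describing $\Spec(\phi)$ and $\widetilde{\phi}^\flat_{(x,\xi)}$ constructed above, these data coincide strictly; applying $\pi_\mathcal{A}$ extracts exactly the same triple.

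The only step that is not purely formal is the identification of the underlying map of $\Spec(\phi)$ with the reindexing produced by $L$ on the natural transformation of stalks, and this is where I expect the main bookkeeping to occur; but it is forced by the fact that both constructions apply the Diers factorization of $\xi \circ \phi^\flat_x$ through its local unit at $A_\xi$, which is precisely how $L$ was defined in \cite{partI}. Once this identification is made, both equalities of functors follow without further computation.
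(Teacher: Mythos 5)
Your proof is correct and takes exactly the route the paper intends: the paper offers no explicit argument, merely asserting the proposition is ``obvious from the way we defined the left adjoint $L \dashv \Pi U$'' in part I, and your unfolding of the definitions on objects and on morphisms is precisely that verification, including the key identifications $X_\mathbb{B} = \coprod_{x \in X} X_{\mathbb{B}\mid_x}$ and of $\Spec(\phi)$ with the reindexing produced by $L$ via the Diers factorization of $\xi \circ \phi^\flat_x$. The only caveats are cosmetic and inherited from the paper itself: the family $(u_x)$ in a morphism of $U$-spaces should be indexed by $X_2$ (as you correctly write, against the paper's typo), and the stalk condition is stated as an isomorphism $\mathbb{A}\!\mid_x \simeq U(A_x)$ rather than an equality, so the Beck--Chevalley squares commute up to the same coherent isomorphisms the paper elides.
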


In some sense, the functors $ \pi_\mathcal{B}$ and $\pi_\mathcal{A}$ forget about the topological data attached to structured spaces and remind only the local data at stalks. But in a converse process, we could see families in the free product completion as \emph{discrete} structured spaces: indeed we can define two functors \[ \iota_\mathcal{B} : \Pi \mathcal{B} \hookrightarrow \mathcal{B}-Spaces\] sending a family $(B_i)_{i \in I}$ to the $\mathcal{B}$-space $ ((I, \mathcal{P}(I)), \mathbb{B})$ where the set $I$ is now equiped with the discrete topology and the sheaf $\mathbb{B}$ is defined as $ \mathbb{B}(J) = \prod_{i \in J}B_i$ for $ J \subseteq I$. Similarly we define a functor \[\iota_\mathcal{A} : \Pi \mathcal{A} \hookrightarrow U-Spaces \] sending a family $ (A_i)_{i \in I}$ to the discrete $U$-space $((I, \mathcal{P}(I)), \mathbb{A}, (A_i)_{i \in I})$ where $ \mathbb{A}(J) = \prod_{i \in I} U(A_i)$ for $J \subseteq I$.

\section{2-Functoriality of Diers construction and 2-category of dualities}

In this section, we define a convenient 2-categorical context for Diers construction, introducing both a 2-category of ``Diers contexts" and a 2-category of ``geometric dualities" abstracting the spectral adjunctions we get from the construction. First we introduce an adequate notion of morphisms of Diers contexts and explain how they are transported along Diers construction and its subsequent generalization as developed in section 2. Such morphisms will later reveal to allow transfer of geometric information from a duality to another one. \\

\begin{definition}\label{diers contexts}
We define the 2-category ${\mathfrak{Diers}}$ of \emph{Diers contexts} as having\begin{itemize}
    \item as 0-cells, triples $(U, \mathcal{A}, \mathcal{B})$ locally right adjoint functors $ U : \mathcal{A} \rightarrow \mathcal{B} $ with $\mathbb{B}$ locally finitely presentable and satisfying Diers condition
    \item as 1-cells $ (U_1, \mathcal{A}_1, \mathcal{B}_1) \rightarrow (U_2, \mathcal{A}_2, \mathcal{B}_2)$, triples of the form $(F,G, \theta) $ with $ F : \mathcal{A}_1 \rightarrow \mathcal{A}_2$ a functor and $ G : \mathcal{B}_1 \rightarrow \mathcal{B}_2$ a morphism of locally finitely presentable categories, and $\theta$ an invertible 2-cell (which we will leave implicit in general)
\[ \begin{tikzcd}
\mathcal{A}_1 \arrow[]{r}{U_1} \arrow[]{d}[swap]{F}\arrow[phantom]{rd}{\theta \atop \simeq} & \mathcal{B}_1 \arrow[]{d}{G}  \\
\mathcal{A}_2 \arrow[]{r}[swap]{U_2} & \mathcal{B}_2
\end{tikzcd}
\]
\item and, as 2-cells $ (F,G,\theta) \Rightarrow (F',G',\theta')$, pairs of natural transformations $ (\alpha, \beta)$  
\[ \begin{tikzcd}[sep =large]
\mathcal{A}_1 \arrow[]{r}{U_1} \arrow[bend right=30, ""{name=D, inner sep=0.5pt}]{d}[swap]{F} \arrow[bend left=30, ""{name=U, inner sep=0.5pt}]{d}{F'} \arrow[Rightarrow, from=D, to=U]{}{\alpha} & \mathcal{B}_1 \arrow[bend right=30, ""{name=D', inner sep=0.5pt}]{d}[swap]{G} \arrow[bend left=30, ""{name=U', inner sep=0.5pt}]{d}{G'} \arrow[Rightarrow, from=D', to=U']{}{\beta} \\
\mathcal{A}_2 \arrow[]{r}{U_2} & \mathcal{B}_2
\end{tikzcd} \]
such that the two whiskering coincide through the provided iso : $ \theta' U_{2*} \alpha = U_1^*\beta \theta$.
\end{itemize} 
\end{definition}  
\begin{remark}
First, a remark about 1-cells: we recall that a morphism of locally finitely presentable categories is a functor $ G : \mathcal{B}_1 \rightarrow \mathcal{B}_2$ preserving filtered colimits and admitting a left adjoint $G^* :\mathcal{B}_2 \rightarrow \mathcal{B}_1 $. It can be shown that this left adjoint sends finitely presented objects of $\mathcal{B}_2$ to finitely presented object of $\mathcal{B}_1$ - this is equivalent to preservation of filtered colimits by $G$. \\

 Moreover, observe that, for a morphism $ (F,G,\theta)$, we have in each $A \in \mathcal{A}_1$ a pseudo commutative square
\[\begin{tikzcd}
	{\mathcal{A}_1/_A} & {\mathcal{B}_1/_{U_1(A)}} \\
	{\mathcal{A}_2/_{F(A)}} & {\mathcal{B}_2/_{U_2F(A)}}
	\arrow["{U_1/_A}", from=1-1, to=1-2]
	\arrow["{F/_A}"', from=1-1, to=2-1]
	\arrow["{G/_{U_1(A)}}", from=1-2, to=2-2]
	\arrow["{U_2/_{F(A)}}"', from=2-1, to=2-2]
	\arrow["{\theta_A \atop \simeq}" description, from=1-1, to=2-2, phantom, no head]
\end{tikzcd}\]
which is equiped with a canonical mate $ \sigma^A : L^2_{F(A)} \circ G \Rightarrow F \circ L^1_A $ as depicted below 
\[\begin{tikzcd}
	{\mathcal{A}_1/_A} & {\mathcal{B}_1/_{U_1(A)}} \\
	{\mathcal{A}_2/_{F(A)}} & {\mathcal{B}_2/_{U_2F(A)}}
	\arrow["{F/_A}"{name=0, swap}, from=1-1, to=2-1]
	\arrow["{G/_{U_1(A)}}", from=1-2, to=2-2]
	\arrow["{L^1_A}"', from=1-2, to=1-1]
	\arrow["{L^2_{F(A)}}"{name=1}, from=2-2, to=2-1]
	\arrow[Rightarrow, "{\sigma_A}"', from=1, to=0, curve={height=12pt}, shorten <=4pt, shorten >=4pt]
\end{tikzcd}\]
As we shall see, this mate acts as a comparison map between the $U_2$-factorization of the image by $G$ and the image by $G$ of the $U_1 $-factorization. \\

Even though we shall not have major use of it, it is worth giving some precision on 2-cells, especially their interaction with the canonical mate. First observe that on the slices $\alpha$ induces a transformation
\[\begin{tikzcd}
& \mathcal{A}_1/A \arrow[]{dl}[swap]{F/A} \arrow[""{name=U, inner sep=0.5pt, below}]{dr}{F'/A} & \\
\mathcal{A}_2/F(A) \arrow[""{name=D, inner sep=0.5pt}]{rr}[swap]{{\alpha_A}_!} & & \mathcal{A}_2/F'(A) \arrow[Rightarrow, bend left=20, from=D, to=U]{}{\alpha/A}
\end{tikzcd}\]
where ${\alpha_A}_! $ is the postcomposition with the component of $\alpha$ in $A $, $ \alpha_A : F(A) \rightarrow F'(A)$, and in any  $u \in \mathcal{A}_1/A$, $\alpha/A$ has  for component $(\alpha/A)_u  $, which is indeed an arrow in $\mathcal{A}_2/F'(A)$ from the commutativity of the naturality square of $\alpha$. We have the same for $ \beta$ and this induces the following diagram
\[
\begin{tikzcd}[row sep=large,column sep=large]
& \mathcal{A}_1/A \arrow[]{rr}{U_1/A} \arrow[]{ld}{F/A} \arrow[""{name=U, inner sep=0.5pt, near end}, near start]{dd}{F'(A)} & & \mathcal{B}_1/U_1(A) \arrow[]{ld}{G/U_1(A)} \arrow[""{name=U', inner sep=0.5pt, near end}]{dd}{G'/U_1(A)} \\
\mathcal{A}_2/F(A) \arrow[""{name=D, inner sep=0.5pt}]{rd}[swap]{{\alpha_A}_!} \arrow[dash]{r}{}& \arrow[]{r}{U_2/F(A)}& \mathcal{B}_2/GU_1(A) \arrow[""{name=D', inner sep=0.5}]{rd}[swap]{\beta_{U_1(A)!}} & \\
& \mathcal{A}_2/F'(A) \arrow[]{rr}{U_2/F'(A)} && \mathcal{B}_2/G'U_1(A) \arrow[Rightarrow, from=D, to=U, bend left=20]{}{\alpha/_A} \arrow[Rightarrow, from=D', to=U', bend left=20]{}{\beta/_{U_1(A)}}
\end{tikzcd}
\]
where the lower square commutes from $ GU_1(A) = U_2F(A)$ (resp $G'U_1(A) = U_2F'(A)$) and the equality of whiskering. In fact this square does even satisfy Beck-Chevalley condition, that is the following square also commutes :
\[
\begin{tikzcd}
\mathcal{A}_2/F(A) \arrow[]{d}[swap]{{\alpha_A}_!}
&\arrow[]{l}[swap]{L^2_{F(A)}} \mathcal{B}_2/GU_1(A) \arrow[]{d}{\beta_{U_1(A)!}} \\
\mathcal{A}_2/F'(A)  & \arrow[]{l}{L^2_{F'(A)}}  \mathcal{B}/G'U_1(A)
\end{tikzcd}
 \]
This comes from the fact that post composing with an arrow in the range of a stable functor does not modify the candidate part of the factorization (up to isomorphism). Hence for any arrow $f : B \rightarrow U_2F(A)$ in $\mathcal{B}_2$, we have a canonical isomorphism between the factorizations
\[\begin{tikzcd}
B \arrow[]{rd}[swap]{\eta^{F(A)}_f} \arrow[]{rr}{x} && GU_1(A)\simeq U_2F(A)  \arrow[]{r}{U_2(\alpha_A)} & G'U_1(A) \simeq U_2F'(A) \\
& U_2L^2_{F(A)}(A_f)  \arrow[]{ru}{U_2(u_x)}  \arrow[]{r}{\simeq} & U_2L^2_{F'(A)}(U_2(\alpha_A)x) \arrow[]{ru}[swap]{U_2(u_{U_2(\alpha_A)x})}
\end{tikzcd}\]
This provides an equality between the pasting of the sliced transformations with the respective mates :
\[ \alpha/_A*\sigma_A = \sigma'_A*\beta/_{U_1(A)} \]
making the following diagram to commute through a 2-dimensional equality
\[
\begin{tikzcd}[row sep=large,column sep=large]
& \mathcal{A}_1/A  \arrow[""{name=A, inner sep=0.5pt}]{ld}[swap]{F/A} \arrow[""{name=U, inner sep=0.5pt, near end}, ""{name=B', inner sep=0.5pt, near end}, near start]{dd}{F'(A)} & & \arrow[]{ll}[swap]{L^1_A} \mathcal{B}_1/U_1(A) \arrow[]{ld}{G/U_1(A)} \arrow[""{name=U', inner sep=0.5pt, near end}]{dd}{G'/U_1(A)} \\
\mathcal{A}_2/F(A) \arrow[""{name=D, inner sep=0.5pt}]{rd}[swap]{{\alpha_A}_!} & \arrow[""{name=B, inner sep=0.5pt}]{l}{} \arrow[Rightarrow, bend right=20, from=B, to=A]{}{\sigma_A} & \arrow[dash]{l}{L^2_{F(A)}} \mathcal{B}_2/GU_1(A) \arrow[""{name=D', inner sep=0.5}]{rd}[swap]{\beta_{U_1(A)!}} & \\
& \mathcal{A}_2/F'(A)  && \arrow[""{name=A', inner sep=0.5pt, near end}]{ll}{L^2_{F'(A)}} \arrow[Rightarrow, bend right=20, from=A', to=B']{}{\sigma'_A}' \mathcal{B}_2/G'U_1(A) \arrow[Rightarrow, from=D, to=U, bend left=20]{}{\alpha/_A} \arrow[Rightarrow, from=D', to=U', bend left=20]{}{\beta/_{U_1(A)}}
\end{tikzcd}
\]
\end{remark}

Now we must provide an abstraction of the adjunctions one gets from Diers construction and its generalization to arbitrary structured spaces. \\

Now to guess what should be at the other end of our construction, let us give some observation on the nature of the categories of $U$-spaces and $\mathbb{B}$-spaces. \\

First, remark that $\mathcal{B}-Spaces$ is the opposite category of the Grothendieck construction associated to the pseudofunctor   \[ \begin{array}{rcl}
        \mathcal{T}op^{op} & \rightarrow & \mathcal{C}at \\
         (X, \tau) & \mapsto & \Sh_\mathcal{B}(X, \tau) \\ 
         (X_1, \tau_1) \stackrel{f}{\rightarrow} (X_2,\tau_2) & \mapsto &          \Sh_\mathcal{B}(X_2,\tau_2) \stackrel{f^*}{\rightarrow} {S}h_\mathcal{B}(X_1, \tau_1) 
    
    \end{array} \]
and hence has the structure of a fibration: that is for each continuous map $f : (X_1, \tau_1) \rightarrow (X_2, \tau_2)$, we have a cartesian lift $ ((X_2, \tau_2), \mathbb{B}) \rightarrow ((X_1, \tau_1), f^*\mathbb{B}) $, as for any situation as below
\[\begin{tikzcd}[row sep=small]
	& {((X_1, \tau_1), f^*\mathbb{B})} & {((X_2, \tau_2), \mathbb{B})} \\
	{((X_3, \tau_3), \mathbb{B}')} \\
	& {(X_1, \tau_1)} & {(X_2, \tau_2)} \\
	{(X_3, \tau_3)}
	\arrow["{g}", from=4-1, to=3-2]
	\arrow["{f}", from=3-2, to=3-3]
	\arrow["{h}"', from=4-1, to=3-3, curve={height=12pt}]
	\arrow["{(f, 1_{f^*\mathbb{B}})}"', from=1-3, to=1-2]
	\arrow["{(h, \phi)}", from=1-3, to=2-1, curve={height=-12pt}]
\end{tikzcd}\]
we have $ h^*= g^*f^*$ so that $ \phi : h^*\mathbb{B} = g^*f^*\mathbb{B} \rightarrow \mathbb{B}' $ provides itself automatically a lift $ (g, \phi ) : {((X_3, \tau_3), \mathbb{B}')}  \rightarrow  {((X_1, \tau_1), f^*\mathbb{B})} $. Moreover this structure of fibration restricts to $ U-Spaces$ as inverse image preserves stalks, and the cartesian lifts are identities between sheaves, hence have identities at stalks, which are in the range of $U$. Hence we have a morphism of fibrations
\[\begin{tikzcd}[row sep=small]
	{U-Spaces} && {\mathcal{B}-Spaces} \\
	& {\mathcal{T}op^{op}}
	\arrow["{\iota_U}", from=1-1, to=1-3]
	\arrow[from=1-1, to=2-2]
	\arrow[from=1-3, to=2-2]
\end{tikzcd}\]
for the forgetful functor $\iota_U$ does not modify the underlying topological space. Observe that $ \mathbb{B}-Space$ has also a structure of opfibration thanks to direct image, but this structure is not preserved by the restriction of $U$-spaces\\

\begin{definition}\label{dualities}
We define the 2-category $ \mathfrak{Dual}$ of \emph{spectral dualities} as having:\begin{itemize}
    \item as 0-cells, quadruples $(\mathfrak{A}, \mathfrak{B}, \iota, \mathfrak{S})$ with $\mathfrak{A}$ and $\mathfrak{B}$ fibered categories over $ \mathcal{T}op$ such that moreover the fibration of $ \mathfrak{B}$ is also an opfibration over $\mathbb{T}op$, $\iota$ a strict morphism of fibrations 
\[\begin{tikzcd}
	{\mathfrak{A}} & {} & \mathfrak{B} \\
	& {\mathcal{T}op^{op}}
	\arrow["{\iota}", from=1-1, to=1-3]
	\arrow["{p}"', from=1-1, to=2-2]
	\arrow[from=1-3, to=2-2]
\end{tikzcd}\]
and an adjunction 
\[\begin{tikzcd}
	{\mathfrak{A}} && {\mathfrak{B}}
	\arrow["{\iota}"{name=0, swap}, from=1-1, to=1-3, curve={height=12pt}]
	\arrow["{\mathfrak{S}}"{name=1, swap}, from=1-3, to=1-1, curve={height=12pt}]
	\arrow["\dashv"{rotate=-90}, from=1, to=0, phantom]
\end{tikzcd}\]
\item as 1-cells $(\mathfrak{A}_1, \mathfrak{B}_1, \iota_1, \mathfrak{S}_1) \rightarrow (\mathfrak{A}_2, \mathfrak{B}_2, \iota_2, \mathfrak{S}_2)$, triples $ (\mathbb{F}, \mathbb{G}, \theta)$ forming a squares of morphisms of fibrations 
\[\begin{tikzcd}
	{\mathfrak{A}_1} && {\mathfrak{B}_1} \\
	& {\mathcal{T}op^{op}} \\
	{\mathfrak{A}_2} && {\mathfrak{B}_2}
	\arrow["{\iota_1}", from=1-1, to=1-3]
	\arrow["{\iota_2}"', from=3-1, to=3-3]
	\arrow["{\mathbb{F}}"', from=1-1, to=3-1]
	\arrow["{\mathbb{G}}", from=1-3, to=3-3]
	\arrow[from=1-1, to=2-2]
	\arrow[from=1-3, to=2-2]
	\arrow[from=3-1, to=2-2]
	\arrow[from=3-3, to=2-2]
\end{tikzcd}\]
where all triangles commute strictly and $\theta$ is an invertible 2-cell $ \mathbb{G}\iota_1 \simeq \iota_2\mathbb{F}$. \item as 2-cells $ (\alpha, \beta) : (\mathbb{F}_1, \mathbb{G}_1, \theta_1) \Rightarrow (\mathbb{F}_2, \mathbb{G}_2, \theta_2)$ such that we have equality of whiskering 
\[\begin{tikzcd}
	{{\mathfrak{A}_1} } && {{\mathfrak{B}_1} } \\
	\\
	{{\mathfrak{A}_2} } && {{\mathfrak{B}_2} }
	\arrow["{\mathbb{F}_1}"{name=0, swap}, from=1-1, to=3-1, curve={height=12pt}]
	\arrow["{\iota_1}", from=1-1, to=1-3]
	\arrow["{\mathbb{G}_1}"{name=1, swap}, from=1-3, to=3-3, curve={height=12pt}]
	\arrow["{\iota_2}"', from=3-1, to=3-3]
	\arrow["{\mathbb{F}_2}"{name=2}, from=1-1, to=3-1, curve={height=-12pt}]
	\arrow["{\mathbb{G}_2}"{name=3}, from=1-3, to=3-3, curve={height=-12pt}]
	\arrow[Rightarrow, "{\alpha}", from=0, to=2, shorten <=4pt, shorten >=4pt]
	\arrow[Rightarrow, "{\beta}", from=1, to=3, shorten <=4pt, shorten >=4pt]
\end{tikzcd}\]
\end{itemize}
\end{definition}

\begin{remark}
In the following, for an object $\mathbb{B}$ in $\mathfrak{B}$ we denote as $ |\mathbb{B}|$ for the underlying object (similarly, $|f|$ for an arrow), and this notation is transfered to $\mathfrak{A}$ as $|\mathbb{A}| = |\iota\mathbb{A}|$. Beware that in our condition we require that this fibration $|-|$ be itself also an opfibration. \\

Observe that any morphism of duality $(\mathbb{F}, \mathbb{G}, \theta)$ admits a mate 
\[\begin{tikzcd}
	{\mathfrak{A}_1} & {\mathfrak{B}_1} \\
	{\mathfrak{A}_2} & {\mathfrak{B}_2}
	\arrow["{\mathfrak{S}_1}"', from=1-2, to=1-1]
	\arrow["{\mathfrak{S}_2}"{name=0}, from=2-2, to=2-1]
	\arrow["{\mathbb{F}}"{name=1, swap}, from=1-1, to=2-1]
	\arrow["{\mathbb{G}}", from=1-2, to=2-2]
	\arrow[Rightarrow, "{\sigma}"', from=0, to=1, curve={height=6pt}, shorten <=3pt, shorten >=3pt]
\end{tikzcd}\]
In the following we are going to give some interest to the information carried by this mate. Moreover, as well as we can recover the classical $\Gamma \dashv \Spec$ adjunction by looking at the fiber over $*$, we can exploit the condition that $ \mathfrak{B}$ is also an opfibered category to compute a ``global section functor". For any $ \mathbb{A}$ in $\mathfrak{A}$, with projection $ \mid \mathbb{A} \mid$ in $\mathcal{T}op$, we have the opcartesian lift in $ \mathfrak{B}$
\[\begin{tikzcd}[row sep=tiny]
	{\iota(\mathbb{A})} & {{!_{\mid \mathbb{A} \mid}}_*\iota(\mathbb{A})} \\
	{|\mathbb{A}|} & {*}
	\arrow["{!}", from=2-1, to=2-2]
	\arrow["{\overline{{!_{\mid \mathbb{A} \mid}}}}"', from=1-2, to=1-1]
\end{tikzcd}\]
and for any morphism $ f: \mathbb{A}_1 \rightarrow \mathbb{A}_2$ we end up with two distinct lifts over the point related by the following arrow 
\[\begin{tikzcd}[row sep=small]
	& {\iota(\mathbb{A}_1)} & {{!_{\mid \mathbb{A}_1 \mid}}_*\iota(\mathbb{A}_1)} \\
	{\iota(\mathbb{A}_2)} && {{!_{\mid \mathbb{A}_2 \mid}}_*\iota(\mathbb{A}_2) \simeq {!_{\mid \mathbb{A}_1 \mid}}_*|f|_*\iota(\mathbb{A}_2)} \\
	& {|\mathbb{A}_1|} & {*} \\
	{|\mathbb{A}_2|}
	\arrow["{!_{| \mathbb{A}_1|}}", from=3-2, to=3-3]
	\arrow["{\overline{{!_{\mid \mathbb{A}_1 \mid}}}}"', from=1-3, to=1-2]
	\arrow["{|f|}", from=4-1, to=3-2]
	\arrow["{!_{| \mathbb{A}_2|}}"', from=4-1, to=3-3, curve={height=12pt}]
	\arrow["{\iota(f)}"', from=1-2, to=2-1]
	\arrow["{\overline{{!_{\mid \mathbb{A}_2 \mid}}}}", from=2-3, to=2-1]
	\arrow["{{!_{\mid \mathbb{A}_1 \mid}}_*(\iota(f))}"', from=1-3, to=2-3]
\end{tikzcd}\]
This allows us to define a Global sections functor into the fiber of $\mathbb{B}$ over the point
\[\begin{tikzcd}[row sep=tiny]
	{\mathfrak{A}} & {\mathfrak{B}_*} \\
	{\mathbb{A}} & {{!_{\mid A \mid}}_*\iota(\mathbb{A})}
	\arrow[from=1-1, to=1-2, "\Gamma"]
	\arrow[from=2-1, to=2-2, shorten <=1pt, shorten >=1pt, maps to]
\end{tikzcd}\]
where $i_* :\mathfrak{B}_* \hookrightarrow \mathfrak{B}$ is the inclusion of the fiber at $*$. 
\end{remark}

\begin{lemma}\label{globalsections}
For a duality $ (\mathfrak{A}, \mathfrak{B}, \iota, \mathfrak{S})$, we have an adjunction $ \mathfrak{S} i_* \dashv \Gamma$
\end{lemma}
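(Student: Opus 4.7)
The strategy is to produce the adjunction by exhibiting a natural bijection
\[ \Hom_{\mathfrak{A}}(\mathfrak{S}i_*B, \mathbb{A}) \simeq \Hom_{\mathfrak{B}_*}(B, \Gamma \mathbb{A}) \]
for each $B \in \mathfrak{B}_*$ and $\mathbb{A} \in \mathfrak{A}$, realized as the composite of two natural bijections. The first simply invokes the ambient adjunction $\mathfrak{S} \dashv \iota$ carried by the duality, yielding
\[ \Hom_{\mathfrak{A}}(\mathfrak{S}i_*B, \mathbb{A}) \simeq \Hom_{\mathfrak{B}}(i_*B, \iota \mathbb{A}). \]

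The second bijection will come from the universal property of the (op)cartesian lift $\overline{!_{\mid \mathbb{A} \mid}} : \Gamma \mathbb{A} \to \iota \mathbb{A}$ constructed just before the statement. The key observation is that $|i_*B| = *$ is the terminal object of $\mathcal{T}op$, so any morphism $f : i_*B \to \iota \mathbb{A}$ in $\mathfrak{B}$ projects under $|{-}|$ to the unique morphism $* \to |\mathbb{A}|$ in $\mathcal{T}op^{op}$, which is precisely the base morphism underlying $\overline{!_{\mid \mathbb{A} \mid}}$. By the universal property of the lift, every such $f$ factors uniquely as $f = \overline{!_{\mid \mathbb{A} \mid}} \circ \tilde f$ for a uniquely determined $\tilde f : B \to \Gamma \mathbb{A}$ living in the fiber $\mathfrak{B}_*$. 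This gives
\[ \Hom_{\mathfrak{B}}(i_*B, \iota \mathbb{A}) \simeq \Hom_{\mathfrak{B}_*}(B, \Gamma \mathbb{A}), \]
and composing the two bijections produces the adjunction $\mathfrak{S}i_* \dashv \Gamma$.

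For naturality, naturality in $B$ is immediate from uniqueness of the factorization through $\overline{!_{\mid \mathbb{A} \mid}}$: precomposing $f$ with $i_*g$ for $g : B' \to B$ in $\mathfrak{B}_*$ corresponds by uniqueness to precomposing $\tilde f$ with $g$. Naturality in $\mathbb{A}$ is handled by invoking the commutative square (up to canonical iso) preceding the lemma statement: for $h : \mathbb{A}_1 \to \mathbb{A}_2$ in $\mathfrak{A}$, the lifts $\overline{!_{\mid \mathbb{A}_i \mid}}$ are related via $\iota(h)$ and $\Gamma(h)$, which forces the bijection to exchange post-composition with $\iota(h)$ upstairs and post-composition with $\Gamma(h)$ downstairs.

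The proof is essentially formal once the setup is unwound. The only point that genuinely needs care is to correctly pin down the direction and universal property of the lift $\overline{!_{\mid \mathbb{A} \mid}}$, as the notations ${-}_*$ and the ``(op)cartesian'' terminology can be ambiguous relative to the variance of the base $\mathcal{T}op^{op}$; once one verifies that the lift supplies precisely the universal property ``any morphism into $\iota \mathbb{A}$ with base $* \to |\mathbb{A}|$ factors uniquely in the fiber over $*$'', the rest follows mechanically.
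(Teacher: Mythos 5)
Your proof is correct and follows essentially the same route as the paper's: both reduce the statement to the ambient adjunction $\mathfrak{S}\dashv\iota$ combined with the universal property of the lift $\overline{!_{\mid\mathbb{A}\mid}}:\Gamma\mathbb{A}\to\iota\mathbb{A}$ over the unique map $\ast\to|\mathbb{A}|$, which identifies morphisms $i_*B\to\iota\mathbb{A}$ with fiber morphisms $B\to\Gamma\mathbb{A}$. You merely package this as a composite of two natural hom-set bijections where the paper constructs the unit $\mathbb{B}\to\Gamma\mathfrak{S}\mathbb{B}$ and the two transposes explicitly, and your version is if anything slightly more complete on naturality and on the two directions being mutually inverse.
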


\begin{proof}
For one direction, let be $\mathbb{B}$ an object over the point, that is, $|\mathbb{B}|=*$, and $ f :\mathbb{B} \rightarrow \Gamma \mathbb{A}$. Then composing with the cartesian lift $\overline{{!_{\mid \mathbb{A} \mid}}} : \Gamma\mathbb{A} \rightarrow \iota_(\mathbb{A})$ induces from $ \mathfrak{S} \dashv \iota$ a unique map $   \mathfrak{S}B \rightarrow \mathbb{A}$ in $\mathfrak{A}$. Conversely, for any $   f: \mathfrak{S}B \rightarrow \mathbb{A}$, if we have $ \Gamma f: \Gamma\mathfrak{S}\mathbb{B} \rightarrow \Gamma\mathbb{A}$. But observe that the unit $ \eta_\mathbb{B} : \mathbb{B} \rightarrow \iota\mathfrak{S}\mathbb{B}$ induces a unique dashed arrow below from the opcartesiannes of $\overline{!_{|\iota\mathfrak{S}\mathbb{B}|}} $:
\[\begin{tikzcd}[row sep=small]
	{\Gamma\mathfrak{S}\mathbb{B}} \\
	{*} & {\mathbb{B}} & {\iota\mathfrak{S}\mathbb{B}} \\
	& {*} & {|\iota\mathfrak{S}\mathbb{B}|}
	\arrow["{\eta_\mathbb{B}}"', from=2-2, to=2-3]
	\arrow["{!_{|\iota\mathfrak{S}\mathbb{B}|}}", from=3-3, to=3-2]
	\arrow[Rightarrow, from=2-1, to=3-2, no head]
	\arrow["{!_{|\iota\mathfrak{S}\mathbb{B}|}}"'{very near start}, from=3-3, to=2-1, curve={height=6pt}]
	\arrow[from=2-2, to=1-1, dashed]
	\arrow[from=1-1, to=2-3, curve={height=-6pt}]
\end{tikzcd}\] and this is the unit of the restricted adjunction $ \mathfrak{S}i_* \dashv \Gamma$. Indeed, we can then compose $\Gamma f: \Gamma\mathfrak{S}\mathbb{B} \rightarrow \Gamma\mathbb{A}$ with $ B \rightarrow \Gamma \mathfrak{S}$ to get a map $ f :\mathbb{B} \rightarrow \Gamma \mathbb{A}$.
\end{proof}

\begin{remark}
This lemma shows that the fact we could restrict the spectral adjunction as developped in section 2 to the original version of Diers as recalled in section 1 is in fact inherent to the situation of duality and processes essentially from fibrational and opfibrational aspects, which are enacted through inverses and direct image in the concrete situations. 
\end{remark}

\begin{theorem}\label{Spectral dualities}
The construction above defines a 2-functor 
\[ \mathfrak{Spec}:  \mathfrak{Diers} \longrightarrow \mathfrak{Dual} \]
assigning to each Diers context $ (U,\mathcal{A}, \mathcal{B})$ the adjunction $ \Spec_U \dashv \iota_U$. 
\end{theorem}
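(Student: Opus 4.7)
The plan is to verify 2-functoriality in three stages: 0-cells, 1-cells, 2-cells, and then check preservation of compositions.

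For 0-cells, given a Diers context $(U, \mathcal{A}, \mathcal{B})$, I would send it to the quadruple $(U\text{-}Spaces, \mathcal{B}\text{-}Spaces, \iota_U, \Spec_U)$. The fibration structure on $\mathcal{B}\text{-}Spaces$ over $\mathcal{T}op^{op}$ is the one already described just before \cref{dualities} via inverse image, with opfibration structure provided by direct image; it restricts to a fibration on $U\text{-}Spaces$ because inverse image preserves stalks (hence stays in the range of $U$); the functor $\iota_U$ is strict over the base since it does not alter the underlying space; and the required adjunction is precisely \cref{generalizedadjunction}.

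For 1-cells, given $(F, G, \theta): (U_1, \mathcal{A}_1, \mathcal{B}_1) \to (U_2, \mathcal{A}_2, \mathcal{B}_2)$, I would define $\mathbb{G}$ by post-composition: $((X,\tau), \mathbb{B}) \mapsto ((X,\tau), G\mathbb{B})$. This is well-defined since $G$, being a right adjoint, preserves limits and so sends sheaves to sheaves; on morphisms $(f, \phi)$ I send to $(f, G*\phi)$, using that $G$ commutes strictly with direct image (which is precomposition with $f^{-1}$), and recovering the inverse image part via $f^* \dashv f_*$. In parallel I would define $\mathbb{F}$ by $((X,\tau), \mathbb{A}, (A_x)) \mapsto ((X,\tau), G\mathbb{A}, (FA_x))$, where the stalk identification $(G\mathbb{A})\!\mid_x \simeq G(U_1(A_x)) \simeq U_2(F(A_x))$ is provided by $\theta$ together with preservation of filtered colimits by $G$; on morphisms the family $(u_x)$ is replaced by $(Fu_x)$. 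The commutation $\mathbb{G}\iota_1 \simeq \iota_2\mathbb{F}$ is then immediate from the definitions, with $\theta$ controlling the identification at stalks.

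For 2-cells, given $(\alpha, \beta)$, the component of the induced $\mathbb{G}_1 \Rightarrow \mathbb{G}_2$ at $((X,\tau), \mathbb{B})$ is the morphism over $1_{(X,\tau)}$ whose direct image part is the whiskering $\beta * \mathbb{B}: G\mathbb{B} \Rightarrow G'\mathbb{B}$, and similarly $\mathbb{F}_1 \Rightarrow \mathbb{F}_2$ has direct image part $\beta * \mathbb{A}$ on the sheaf and family component $(\alpha_{A_x})_x$. The compatibility $\theta' U_{2*}\alpha = U_1^*\beta\theta$ assumed of a 2-cell in $\mathfrak{Diers}$, evaluated at each $A_x$, precisely guarantees that at the stalk $x$ the map $\beta_{U_1(A_x)}$ agrees with $U_2(\alpha_{A_x})$ through the $\theta$'s, making the data a legitimate morphism of $U_2$-spaces; that same equation supplies the whiskering identity required for a 2-cell in $\mathfrak{Dual}$.

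The hardest step will be verifying the well-definedness of $\mathbb{F}$ on morphisms: one must check, stalk-wise, the identity $(G\phi)^\flat_x = U_2(Fu_x)$ starting from $\phi^\flat_x = U_1(u_x)$, which forces one to track how $F$ transports the $U_1$-factorization of a $\mathcal{B}_1$-morphism across the square, using both $\theta$ and the canonical mate $\sigma$ discussed in the remark following \cref{diers contexts}. Once this coherence is established, 2-functoriality is routine: all constructions arise by post-composition or whiskering with $F, G, \alpha, \beta$, so identities and compositions in $\mathfrak{Diers}$ are sent on the nose (up to the pasted $\theta$'s) to identities and compositions in $\mathfrak{Dual}$.
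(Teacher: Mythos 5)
Your proposal is correct and follows essentially the same route as the paper: 0-cells are handled by the fibration/opfibration structure on $\mathcal{B}$-spaces and the adjunction of \cref{generalizedadjunction}, and 1-cells by postcomposition with $G$ (using that $G$ is a finitary right adjoint to preserve the sheaf condition and stalks) together with $\theta$ to re-tag the stalks by $F$. You in fact go slightly further than the paper, which leaves the 2-cell assignment and the remaining coherence checks as an exercise; your stalk-wise verification for $\mathbb{F}$ on morphisms needs only naturality of $\theta$ and finitariness of $G$, the mate $\sigma$ being reserved in the paper for the comparison of the two spectra rather than for this step.
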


\begin{proof}

The action of this functor on 0-cells was described in the second part. The remaining part of this section is aimed at making explicit how a morphism of Diers context could induce a 1-cell in the 2-category of geometric dualities.\\

In the following we fix a morphism of Diers contexts $(F,G) :(U_1, \mathcal{A}_1, \mathcal{B}_1) \rightarrow (U_2, \mathcal{A}_2, \mathcal{B}_2)   $. We construct a pair of functors $ \mathbb{F} : U_1-Spaces \rightarrow U_2-Spaces $ and $\mathbb{G} : \mathcal{B}_1-Spaces \rightarrow  \mathcal{B}_2-Spaces $ as follows. For each $\mathcal{B}_1$ space $((X,\tau), \mathbb{B})$, the sheaf $\mathbb{B}$ of $\mathcal{B}_1$ objects can be turned into a sheaf of $ \mathcal{B}_2$ objects thanks to $G$, not only without changing the base space, but also in a quite smooth manner: we have from composition with $G$ a presheaf $ G\mathbb{B} : \tau^{op} \rightarrow \mathcal{B}_2 $ which acts as $ u \mapsto G(\mathbb{B}(u))$; but in fact this presheaf is allready a sheaf, for $G$ is a right adjoint as a morphism of locally finitely presentable categories, so that for each $ u$ and each $(u_i)_{i \in I}$ with $ u =\bigcup_{i \in I} u_i$ the limit in the descent property for $\mathbb{B}$ is preserved by $G$, that is,
\[  G\mathbb{B}(u) = \lim \Big{(} \prod_{i \in I} G\mathbb{B}(u_i) \rightrightarrows \prod_{i,j \in I} G\mathbb{B}(u_i \cap u_j) \Big{ )} \]
Hence this defines a sheaf of $\mathcal{B}_2$ objects $ \mathbb{G}(\mathbb{B})$ on $(X,\tau)$, and for a morphism of sheaf, we define $\mathbb{G}(\phi) $ as the whiskering
\[\begin{tikzcd}
	{\tau^{op}} && {\mathcal{B}_1} & {\mathcal{B}_2}
	\arrow["{\mathbb{B}_1}"{name=0}, from=1-1, to=1-3, curve={height=-12pt}]
	\arrow["{\mathbb{B}_2}"{name=1, swap}, from=1-1, to=1-3, curve={height=12pt}]
	\arrow["{G}", from=1-3, to=1-4]
	\arrow[Rightarrow, "{\phi}", from=0, to=1, shorten <=2pt, shorten >=2pt]
\end{tikzcd}\]
This defines a functor $ \mathbb{G}$ sending a $\mathcal{B}$-space $((X,\tau), \mathbb{B})$ on $((X,\tau), \mathbb{G}(\mathbb{B}))$
This functor $ \mathbb{G}$ conveniently restricts along $\iota_{U_1}$ thanks to its relation with $F$. Let be  $((X,\tau), \mathbb{A}, (A_x)_{x \in X})$ a $U_1$-space; then $((X,\tau), \mathbb{G}(\mathbb{A}))$ canonically defines a $ U_2$-space: indeed, at each stalk $x$ we have $ \mathbb{A}\!\mid_x = U_1(A_x)$; but now, as $G$ is finitary, we have 
\begin{align*}
    \mathbb{G}(\mathbb{A})\!\mid_x &= \underset{x \in u}{\colim} \; G(\mathbb{A}(u)) \\
    &\simeq G(\underset{x \in u}{\colim} \; \mathbb{A}(u)) \\
    &\simeq G(\mathbb{A}\mid_x)\\
    &= GU_1(A_x) \\
    &\simeq U_2F(A_x) 
\end{align*}
where the last isomorphism comes from the natural isomorphism $ \theta : GU_1 \simeq U_2F $: hence $ \mathbb{G}(\mathbb{A})$ has $F(A_x)_{x \in X}$ as the local data attached to its stalks, and we can do the same for arrows. Hence we just have to put $\mathbb{F}(((X,\tau), \mathbb{A}, (A_x)_{x \in X}) = ((X,\tau), \mathbb{G}(\mathbb{A}), (F(A_x))_{x \in X}) $. This proves we have an invertible 2-cell
\[\begin{tikzcd}
	{U_1-Spaces} & {\mathcal{B}_1-Spaces} \\
	{U_2-Spaces} & {\mathcal{B}_2-Spaces}
	\arrow["{\mathbb{F}}"', from=1-1, to=2-1]
	\arrow["{\iota_{U_1}}", from=1-1, to=1-2]
	\arrow["{\mathbb{G}}", from=1-2, to=2-2]
	\arrow["{\iota_{U_2}}"', from=2-1, to=2-2]
	\arrow["{\theta \atop \simeq}" description, from=1-1, to=2-2, phantom, no head]
\end{tikzcd}\]
2-functoriality can be left as an exercice, for it is more tiresome than enlightening and does not seem to correspond to any known concrete situation. 
\end{proof}

\begin{remark}
In the following, we need to distinguish data relative to $U_1$ and data relative to $U_2$. To this end we put the corresponding index into exponent to precise if a spectral data is relative to $U_1$ or $U_2$: for instance the set $ D^1_B$ is the set of $U_1$-diagonally universal morphisms under $B$ in $\mathcal{B}_1$; in particular we denote as $ \mathcal{D}^1_\mathbb{B}$ the category of basic open in the $U_1$-spectral topology for a sheaf of $\mathcal{B}_1$-objects $\mathbb{B}$ as defined in section 2; similarly $D^1 : \mathcal{D}^1_\mathbb{B} \rightarrow \tau^{op}_{\mathbb{B}}$ is the corresponding basis of the $U_1$ spectral topology, and so on... 
\end{remark}

Now we turn to the description of the mate $\sigma$, which entangles the action of the associated spectra $ \Spec_{U_1}$ and $\Spec_{U_2}$. We want to exhibit a canonical 2-cell
\[\begin{tikzcd}
	{U_1-Spaces} & {\mathcal{B}_1-Spaces} \\
	{U_2-Spaces} & {\mathcal{B}_2-Spaces}
	\arrow["{\mathbb{F}}"{name=0, swap}, from=1-1, to=2-1]
	\arrow["{\Spec_{U_1}}"', from=1-2, to=1-1]
	\arrow["{\mathbb{G}}", from=1-2, to=2-2]
	\arrow["{\Spec_{U_2}}"{name=1}, from=2-2, to=2-1]
	\arrow[Rightarrow, "{\sigma}"', from=1, to=0, curve={height=12pt}, shorten <=5pt, shorten >=5pt]
\end{tikzcd}\]
In the following we fix a $\mathcal{B} $-space $((X,\tau), \mathbb{B})$. First, let us look at the the underlying map. Observe that $ G$ may not preserve diagonally universal morphisms: hence in each $ (x, \xi) \in X_\mathbb{B}$, we have a factorization 
\[\begin{tikzcd}
	{G(\mathbb{B}\!\mid_x)} && {GU_1(A_\xi) \simeq U_2F(A_\xi)} \\
	& {U_2(A_\xi)}
	\arrow["{G(\xi)}", from=1-1, to=1-3]
	\arrow["{\eta^{A_\xi}_{G(\xi)}}"', from=1-1, to=2-2]
	\arrow["{U_2L_{A_\xi}(G(\xi))}"', from=2-2, to=1-3]
\end{tikzcd}\]
Moreover we also have the following general property concerning the way $G$ interacts with stable factorization in $\mathcal{B}_1$:
 \begin{lemma}
For any $f : B \rightarrow U_1(A)$ we have that $ n_{G(\eta^{A}_f)}= \eta^{F(A)}_{G(f)}$.
\end{lemma}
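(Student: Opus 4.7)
The plan is to exploit uniqueness (up to canonical iso) of the $U_2$-local factorization of $G(f)$. First I would start from the canonical $U_1$-factorization $f = U_1(L_A(f)) \circ \eta^A_f$ in $\mathcal{B}_1$ and apply the functor $G$. Transporting along the natural iso $\theta : GU_1 \simeq U_2 F $ one obtains
$$ G(f) = U_2(FL_A(f)) \circ G(\eta^A_f) $$
as an arrow $G(B) \to U_2 F(A)$ in $\mathcal{B}_2$.

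Next, since $G$ need not preserve diagonally universal morphisms, $G(\eta^A_f) : G(B) \to U_2 FL_A(f)$ will in general admit a nontrivial $U_2$-factorization
$$ G(\eta^A_f) \;=\; U_2(\ell) \circ n_{G(\eta^A_f)} $$
with local unit part $n_{G(\eta^A_f)} : G(B) \to U_2(A')$ and some morphism $\ell : A' \to FL_A(f)$ in $\mathcal{A}_2$. Substituting this into the previous display produces a factorization
$$ G(f) \;=\; U_2\bigl(FL_A(f) \circ \ell\bigr) \circ n_{G(\eta^A_f)} $$
of $G(f)$ as a $U_2$-local unit followed by a morphism in the range of $U_2$, namely the one induced by $FL_A(f) \circ \ell : A' \to F(A)$.

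Finally, I would compare this with the canonical $U_2$-factorization $G(f) = U_2(L_{F(A)}(G(f))) \circ \eta^{F(A)}_{G(f)}$. The universal property of local units (uniqueness of the stable factorization up to a unique iso in the fiber over $G(B)$) forces $n_{G(\eta^A_f)} = \eta^{F(A)}_{G(f)}$, together with a compatible iso identifying $FL_A(f) \circ \ell$ with $L_{F(A)}(G(f))$. The only real subtlety is bookkeeping: one must be careful to transport equalities along $\theta$, since what appears as an equality of codomains is strictly speaking a canonical iso. Beyond that, the argument is a direct invocation of the universal property of the Diers factorization.
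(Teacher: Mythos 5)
Your argument is correct and is essentially the paper's own proof: both apply $G$ to the $U_1$-factorization of $f$, transport along $\theta$ to view $G(f)$ as landing in $U_2F(A)$, and then conclude from the uniqueness of the multi-adjoint factorization --- which the paper phrases as ``postcomposing with an arrow in the range of $U_2$ does not modify the candidate part.'' The only cosmetic difference is that the paper makes the comparison via the explicit mate $\sigma^A_f$, whereas you invoke the universal property directly; the content is the same.
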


\begin{proof}
Beware that there is no reason for $G$ to preserve diagonally universal morphisms. However we have the following factorization
\[
\begin{tikzcd}[row sep = large, column sep=large] 
G(B) \arrow[]{r}{G(f)} \arrow[]{d}[swap]{n_{G(f)}} \arrow[]{dr}[near start, swap]{G(n_f)} & U_2F(A) \\
U_2(L^2_{F(A)}(G(f))) \arrow[]{r}[swap]{U_2(\sigma^A_f)} \arrow[crossing over]{ru}[near end]{U_2(u_{G(f)})} &U_2F(L^1_A(f)) \arrow[]{u}[swap]{GU_1(u_{f})= U_2F(u_f)} &
\end{tikzcd}
\]
And postcomposing with $GU_1(u_{f})= U_2F(u_f) $ does not modify the factorization while $ U(\sigma^A_f)$ is a local morphism, so that $ n_{G(f)}$ is also the candidate for $ G(n_f)$. 
\end{proof}
 
Also make the following observation about the left adjoint of $G$:

\begin{lemma}\label{G^* is cool}
$G^*$ sends $U_2$-diagonally universal morphisms of finite presentation to $U_1$-diagonally universal morphisms of finite presentation.
\end{lemma}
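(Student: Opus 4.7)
The statement splits into two independent claims: preservation of finite presentation and preservation of diagonal universality.

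For finite presentation, the paper has already recalled the standard fact about morphisms of locally finitely presentable categories that if $G$ is finitary, its left adjoint $G^*$ sends finitely presented objects to finitely presented objects. So if $l : K \rightarrow K'$ is a $U_2$-diagonally universal morphism with $K, K'$ in $(\mathcal{B}_2)_{fp}$, both $G^*K$ and $G^*K'$ are finitely presented in $\mathcal{B}_1$ and $G^*l$ is a morphism between them. Nothing more needs to be done for this half.

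The content of the lemma is the diagonal-universality part. The plan is to show, for every arrow $f : A \rightarrow A'$ in $\mathcal{A}_1$, that $G^*l \perp U_1(f)$. First I would use the adjunction $G^* \dashv G$ to put commutative squares of the form
\[
\begin{tikzcd}
G^*K \arrow[r] \arrow[d, "G^*l"'] & U_1(A) \arrow[d, "U_1(f)"] \\
G^*K' \arrow[r] & U_1(A')
\end{tikzcd}
\]
into natural bijection with squares of the form
\[
\begin{tikzcd}
K \arrow[r] \arrow[d, "l"'] & GU_1(A) \arrow[d, "GU_1(f)"] \\
K' \arrow[r] & GU_1(A')
\end{tikzcd}
\]
under which diagonal fillers correspond bijectively. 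Then I would invoke the natural isomorphism $\theta : GU_1 \simeq U_2 F$ to rewrite the right-hand column as $U_2F(f) : U_2F(A) \rightarrow U_2F(A')$, reducing the problem to orthogonality of $l$ against $U_2(F(f))$. Since $F(f)$ is an arrow in $\mathcal{A}_2$ and $l$ is by hypothesis $U_2$-diagonally universal, a unique diagonal filler exists; transporting it back through $\theta$ and the adjoint transposition yields the unique lift against $U_1(f)$.

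The main obstacle---insofar as there is one---is purely bookkeeping: checking that the composite bijection (adjoint transposition followed by conjugation by $\theta$) sends unique diagonal fillers to unique diagonal fillers. This is immediate from naturality of $\theta$ together with naturality of the unit and counit of $G^* \dashv G$, but it is the one step that deserves explicit care. Once this is in place, combining with the finite-presentation observation shows that $G^*l$ is a $U_1$-diagonally universal morphism of finite presentation, as claimed.
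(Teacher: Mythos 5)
Your argument for the orthogonality half is exactly the paper's proof: transpose a lifting square against $U_1(f)$ through the adjunction $G^*\dashv G$, identify the right-hand column with $U_2F(f)$ via $\theta$, extract the unique filler from $U_2$-diagonal universality of $l$, and transpose back. You additionally spell out the finite-presentation half (which the paper's own proof leaves tacit, relying on the earlier remark that $G^*$ preserves finitely presented objects); this is a welcome supplement, and the proposal is correct.
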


\begin{proof}
Let be $n : B \rightarrow C$ in $ \mathcal{D}^2_B$ for $B$ in $\mathcal{B}_2$, and $ u : A \rightarrow A' $ in $\mathcal{A}_1$. Then for a square
\[\begin{tikzcd}
	{G^*(B)} & {U_1(A)} \\
	{G^*(C)} & {U_1(A')}
	\arrow["{G^*(n)}"', from=1-1, to=2-1]
	\arrow["{g}"', from=2-1, to=2-2]
	\arrow["{f}", from=1-1, to=1-2]
	\arrow["{U_1(u)}", from=1-2, to=2-2]
\end{tikzcd}\]
then by adjunction this square corresponds to a unique square in $\mathcal{B}_2$ which admist a filler as below 
\[\begin{tikzcd}
	{B} & {GU_1(A) \simeq U_2F(A)} \\
	{C} & {GU_1(A') \simeq U_2F(A')}
	\arrow["{n}"', from=1-1, to=2-1]
	\arrow["{\overline{g}}"', from=2-1, to=2-2]
	\arrow["{\overline{f}}", from=1-1, to=1-2]
	\arrow["{U_2F(A)}", from=1-2, to=2-2]
	\arrow["{d}" description, from=2-1, to=1-2, dashed]
\end{tikzcd}\]
and the map $C \rightarrow GU_1(A)$ itself corresponds again uniquely to a map $ \overline{d} : G^*C \rightarrow U_1(A)$ wich is a filler in $\mathcal{B}_1$.
\end{proof} 
 
\begin{proposition}
For any $((X,\tau), \mathbb{B})$ following map is continuous 
\[\begin{tikzcd}[column sep=large, row sep=tiny]
	{Spec_{U_1}(\mathbb{B}) } & {Spec_{U_2}(\mathbb{G}(\mathbb{B}))} \\
	{(x,\xi)} & {(x, \eta^{F(A_\xi)}_{G(\xi)})}
	\arrow["{\mathfrak{s}_{((X,\tau), \mathbb{B})}}", from=1-1, to=1-2]
	\arrow[from=2-1, to=2-2, shorten <=6pt, shorten >=6pt, maps to]
\end{tikzcd}\]
\end{proposition}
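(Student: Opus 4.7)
The topology $\tau_{\mathbb{G}(\mathbb{B})}$ is generated by the basic opens $D^2_{(v,m)}$ with $v \in \tau$ and $m \in \mathcal{D}^2_{G(\mathbb{B}(v))}$, so I only need to exhibit each preimage $\mathfrak{s}^{-1}(D^2_{(v,m)})$ as an open of $\tau_{\mathbb{B}}$. I shall in fact identify it as a basic open of the form $D^1_{(v,n)}$ for a well-chosen $n \in \mathcal{D}^1_{\mathbb{B}(v)}$, which will make continuity (and even spectrality in the sense used throughout the paper) transparent.

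To produce $n$ I use the left adjoint $G^*$ together with the counit of $G^* \dashv G$. By \cref{G^* is cool}, $G^*m : G^*G(\mathbb{B}(v)) \rightarrow G^*C$ is $U_1$-diagonally universal of finite presentation; pushing out along the counit $\epsilon_{\mathbb{B}(v)} : G^*G(\mathbb{B}(v)) \rightarrow \mathbb{B}(v)$ and invoking stability of $\mathcal{D}^1$ under pushouts, I set $n := (\epsilon_{\mathbb{B}(v)})_*(G^*m)$, which lies in $\mathcal{D}^1_{\mathbb{B}(v)}$ as required.

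The bulk of the argument is a chain of equivalences starting from the defining condition of $\mathfrak{s}^{-1}(D^2_{(v,m)})$. By definition $(x,\xi)$ belongs to this preimage iff $x \in v$ and $(G\rho^v_x)_*m \leq \eta^{F(A_\xi)}_{G(\xi)}$; by the universal property of the pushout defining $(G\rho^v_x)_*m$, this amounts to the existence of a filler $s : C \rightarrow U_2(A_{\eta^{F(A_\xi)}_{G(\xi)}})$ satisfying $sm = \eta^{F(A_\xi)}_{G(\xi)}\, G\rho^v_x$. Since $m$ is $U_2$-diagonally universal and the morphism $U_2 L_{F(A_\xi)}(G(\xi))$ is local, the diagonal lifting property lets me replace the condition by the existence of $s' : C \rightarrow U_2F(A_\xi) \simeq GU_1(A_\xi)$ with $s'm = G(\xi \rho^v_x)$. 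Transposing $s'$ across $G^* \dashv G$ and using naturality of $\epsilon$ then converts this into the existence of $\widetilde{s} : G^*C \rightarrow U_1(A_\xi)$ such that $\widetilde{s}\, G^*m = \xi \rho^v_x \, \epsilon_{\mathbb{B}(v)}$; the universal property of the pushout defining $n$ makes this the same thing as the existence of a filler $D \rightarrow U_1(A_\xi)$ extending $\xi \rho^v_x$, that is, $n \leq \xi \rho^v_x$, which by left cancellation of pushouts coincides with $(\rho^v_x)_*n \leq \xi$, i.e.\ with $(x,\xi) \in D^1_{(v,n)}$.

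The main obstacle is the middle equivalence, where I trade ``$\eta^{F(A_\xi)}_{G(\xi)}$ on the right'' for ``$G(\xi\rho^v_x)$ on the right'': this is the only step that genuinely exploits the left-orthogonality of $m$ against local morphisms, combined with the candidate-compatibility lemma $n_{G(\eta^A_f)} = \eta^{F(A)}_{G(f)}$ proved just above. The remaining steps are routine unwrapping of adjunction transposes and universal properties of pushouts, and the computation $\mathfrak{s}^{-1}(D^2_{(v,m)}) = D^1_{(v,n)}$ gives continuity directly.
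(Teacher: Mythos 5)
Your proof is correct and follows essentially the same route as the paper's: both identify $\mathfrak{s}^{-1}(D^2_{(v,m)})$ with the basic open $D^1_{(v,(\epsilon_{\mathbb{B}(v)})_*G^*m)}$ using \cref{G^* is cool}, the $G^*\dashv G$ transpose, and the pushout along the counit. The only (welcome) difference is presentational: you run a single chain of equivalences and make explicit, via left-orthogonality of $m$ against $U_2L_{F(A_\xi)}(G(\xi))$, the passage between factoring $\eta^{F(A_\xi)}_{G(\xi)}$ and factoring $G(\xi\rho^v_x)$, a step the paper handles in two separate inclusions (invoking the Diers condition for the converse direction).
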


\begin{proof}
Let be $ (u,n) \in \mathcal{D}^2_{\mathbb{G}\mathbb{B}} $ and $(x, \xi)$ in $ \mathfrak{s}^{-1}_{((X,\tau), \mathbb{B})}(D^2_{(u,n)}) $: that is, suppose we have a factorization
\[\begin{tikzcd}
	{G(\mathbb{B}(u))} & {G(\mathbb{B}\!\mid_x)} \\
	{C} & {U_2(A_{G(\xi)})}
	\arrow["{n}"', from=1-1, to=2-1]
	\arrow["{s}"', from=2-1, to=2-2]
	\arrow["{\eta^{F(A_\xi)}_{G(\xi)}}", from=1-2, to=2-2]
	\arrow["{\rho^u_x}", from=1-1, to=1-2]
\end{tikzcd}\]
Then, for the left adjoint $ G^*$ preserves finitely presentedness and diagonally universal morphisms by the previous lemma, we know that $ G^*(n) : G^*G(\mathbb{B}(u)) \rightarrow G^*(C)$ is $U_1$-diagonally universal of finite presentation in $\mathcal{B}_1$. But then one can push along the $ G^*\dashv G$-counit, we have a factorization of $ \xi$ through this pushout
\[\begin{tikzcd}[column sep=large]
	{G^*G(\mathbb{B}(u))} & {\mathbb{B}(u)} \\
	{G^*(C)} & {{\epsilon_{\mathbb{B}(u)}}_*G^*(C)} \\
	& {G^*GU_1(A_\xi)} & {U_1(A_\xi)}
	\arrow["{\epsilon_{\mathbb{B}(u)}}", from=1-1, to=1-2]
	\arrow["{G^*(n)}"', from=1-1, to=2-1]
	\arrow["{G^*(s)}"', from=2-1, to=3-2]
	\arrow[from=2-1, to=2-2]	
	\arrow[from=1-1, to=3-2, curve={height=-12pt}, crossing over]
	\arrow["{{\epsilon_{\mathbb{B}(u)}}_*G^*(n)}" description, from=1-2, to=2-2]
	\arrow["\lrcorner"{very near start, rotate=180}, from=2-2, to=1-1, phantom]
	\arrow[from=2-2, to=3-3, dashed]
	\arrow[from=3-2, to=3-3, "\epsilon_{U_1(A_\xi)}"']
	\arrow["{\xi}", from=1-2, to=3-3, curve={height=-12pt}]
\end{tikzcd}\] 
This defines an open $ D^1_{(u, {\epsilon_{\mathbb{B}(u)}}_*G^*(n))}$ containing $(x,\xi)$.
But now any $ \xi'$ in $ D^1_{(u, {\epsilon_{\mathbb{B}(u)}}_*G^*(n))}$ must also be sent inside $ D^2_{(u,n)}$: indeed, if we have a factorization as below 
\[\begin{tikzcd}
	{G^*G(\mathbb{B}(u))} & {\mathbb{B}(u)} \\
	{G^*(C)} & {{\epsilon_{\mathbb{B}(u)}}_*G^*(C)} \\
	&& {U_1(A_\xi)}
	\arrow["{\epsilon_{\mathbb{B}(u)}}", from=1-1, to=1-2]
	\arrow["{G^*(n)}"', from=1-1, to=2-1]
	\arrow[from=2-1, to=2-2]
	\arrow["{{\epsilon_{\mathbb{B}(u)}}_*G^*(n)}" description, from=1-2, to=2-2]
	\arrow["\lrcorner"{very near start, rotate=180}, from=2-2, to=1-1, phantom]
	\arrow[from=2-2, to=3-3, dashed]
	\arrow["{\xi}", from=1-2, to=3-3, curve={height=-12pt}]
\end{tikzcd}\]
then applying again $ G$ and composing with unit ensures that $ n \leq G(\xi)$ as visualized below
\[\begin{tikzcd}
	{C} & {G(\mathbb{B}(u))} \\
	{GG^*(C)} & {G({\epsilon_{\mathbb{B}(u)}}_*G^*(C))} \\
	&& {GU_1(A_\xi)\simeq u_2F(A_\xi)}
	\arrow[from=2-1, to=2-2]
	\arrow["{G({\epsilon_{\mathbb{B}(u)}}_*G^*(n))}" description, from=1-2, to=2-2]
	\arrow[from=2-2, to=3-3, dashed]
	\arrow["{G(\xi)}", from=1-2, to=3-3, curve={height=-12pt}]
	\arrow["{n}"', from=1-2, to=1-1]
	\arrow["{\eta_C}"', from=1-1, to=2-1]
\end{tikzcd}\]
But then in particular one must have $ n \leq \eta^{F(A_\xi)}_{G(\xi)}$ by Diers condition. This proves that 
\[ \mathfrak{s}^{-1}_{((X,\tau), \mathbb{B})}(D^2_{(u,n)})  = D^1_{(u, {\epsilon_{\mathbb{B}(u)}}_*G^*(n))} \]
ensuring the continuity of $\mathfrak{s}_{((X,\tau), \mathbb{B})}$. 
\end{proof}

Now we turn to the construction of a morphism of sheaves $ \sigma^\sharp : \widetilde{\mathbb{G}\mathbb{B}} \rightarrow {\mathfrak{s}_{((X,\tau), \mathbb{B})}}_*\mathbb{G}\widetilde{\mathbb{B}}$. First, consider the expression of the structural presheaf $ \overline{\mathbb{B}}$ at $ D^2_{(u,n)}$ and apply $G$:
\begin{equation*}\begin{split}
    G \overline{\mathbb{B}}\mathfrak{s}^{-1}_{((X,\tau), \mathbb{B})}(D^2_{(u,n)}) &\simeq G \Big{(}\underset{\mathfrak{s}^{-1}_{((X,\tau), \mathbb{B})}(D^2_{(u,n)}) \subseteq D^1(v,m)}{\colim} \,\cod(m) \Big{)}\\
    &\simeq \underset{\mathfrak{s}^{-1}_{((X,\tau), \mathbb{B})}(D^2_{(u,n)}) \subseteq D^1(v,m)}{\colim} \,G(\cod(m)) 
 \end{split}\end{equation*} 
where the last iso comes form $G$ is finitary. The indexing set enumerates the pairs such that 
\[D^1_{(u, {\epsilon_{\mathbb{B}(u)}}_*G^*(n))} \subseteq D^1(v,m)  \]
But we always have $ n \leq G({\epsilon_{\mathbb{B}(u)}}_*G^*(n)))$ thanks to the unit as seen in the following diagram
\[\begin{tikzcd}
	& {G(\mathbb{B}(u))} \\
	{C} && {G({\epsilon_{\mathbb{B}(u)}}_*G^*(C))} \\
	& {GG^*(C)}
	\arrow[from=3-2, to=2-3]
	\arrow["{G({\epsilon_{\mathbb{B}(u)}}_*G^*(n))}", from=1-2, to=2-3]
	\arrow["{n}"', from=1-2, to=2-1]
	\arrow["{\eta_C}"', from=2-1, to=3-2]
	\arrow[from=2-1, to=2-3, dashed]
\end{tikzcd}\]
Hence the colimit can restrict to the $D^1_{(v,m)} $ with $u=v$ ad $ m = {\epsilon_{\mathbb{B}(u)}}_*G^*(C)$, and hence the cocone of the colimit, composed with the map above, defines a natural transformation
\[ ( s_{(u,n)} : \cod \Rightarrow G \mathbb{B} \mathfrak{s}^{-1}_{((X,\tau), \mathbb{B})} D )_{(u,n) \in \mathcal{D}^2_{\mathbb{G}\mathbb{B}}}\]
Hence we can invoke the universal property of the left Kan extension as below
\[\begin{tikzcd}[column sep=large]
	{\mathcal{D}^2_{\mathbb{G}\mathbb{B}}} && {\mathcal{B}_2} \\
	{\tau^{op}_{\mathbb{G}\mathbb{B}}} \\
	{\tau^{op}_{\mathbb{B}}} && {\mathcal{B}_1}
	\arrow["{D^2}"', from=1-1, to=2-1]
	\arrow["{\mathfrak{s}^{-1}_{((X,\tau), \mathbb{B})}}"', from=2-1, to=3-1]
	\arrow["{\overline{\mathbb{B}}}"', from=3-1, to=3-3]
	\arrow["{\cod}"{name=0}, from=1-1, to=1-3]
	\arrow["{\overline{\mathbb{G}\mathbb{B}}}"{name=1, swap}, from=2-1, to=1-3]
	\arrow["{G}"{name=2, swap}, from=3-3, to=1-3]
	\arrow[Rightarrow, "{\zeta}", from=0, to=1, shift right=1, shorten <=2pt, shorten >=5pt]
	\arrow[Rightarrow, "{s}"', from=0, to=2, shift right=10, curve={height=12pt}, shorten <=-1pt, shorten >=-1pt, crossing over]
	\arrow[Rightarrow, "{\sigma}", from=1, to=2, shift left=5, curve={height=6pt}, shorten <=5pt, shorten >=5pt, dashed]
\end{tikzcd}\]
Then the desired direct image part is obtained after factorizing through the sheafification. Now, to retrieve the inverse image part, recall that inverse images also are defined through left Kan extensions, and in our case, we have $\mathfrak{s}^{*}_{((X,\tau), \mathbb{B})} \overline{\mathbb{G}(\mathbb{B})} = \lan_{ \mathfrak{s}^{-1}_{((X,\tau), \mathbb{B})}}  \overline{\mathbb{G}(\mathbb{B})} $, so we have a natural map in the diagram below
\[\begin{tikzcd}[sep=huge]
	{\tau^{op}_{\mathbb{G}\mathbb{B}}} && {\mathcal{B}_2} \\
	{\tau^{op}_{\mathbb{B}}} && {\mathcal{B}_1}
	\arrow["{\mathfrak{s}^{-1}_{((X,\tau), \mathbb{B})}}"', from=1-1, to=2-1]
	\arrow["{\overline{\mathbb{B}}}"', from=2-1, to=2-3]
	\arrow["{\overline{\mathbb{G}\mathbb{B}}}"{name=0}, from=1-1, to=1-3]
	\arrow["{G}"{name=1, swap}, from=2-3, to=1-3]
	\arrow["{\lan_{ \mathfrak{s}^{-1}_{((X,\tau), \mathbb{B})}}  \overline{\mathbb{G}(\mathbb{B})}}"{name=2, description}, from=2-1, to=1-3]
	\arrow[Rightarrow, "{\sigma}", from=0, to=1, curve={height=20pt}, shorten <=22pt, shorten >=12pt, crossing over, shift left=8]
	\arrow[Rightarrow, from=0, to=2, shift right=2, shorten <=2pt, shorten >=2pt]
	\arrow[Rightarrow, from=2, to=2-3, shorten <=4pt, shorten >=4pt, dashed]
\end{tikzcd}\]
which induces after sheafification a morphism $\sigma^\flat : {\mathfrak{s}_{((X,\tau), \mathbb{B})}}^*\widetilde{\mathbb{G}\mathbb{B}} \rightarrow \mathbb{G}\widetilde{\mathbb{B}}$.

\begin{remark}
At the level of stalks, the inverse image comorphism happens to coincide with the mate. Indeed, as stalks are unchanged by sheafification, for a point $ (x,\xi) $ of $Spec_{U_1}(\mathbb{B})$ corresponding to a candidate $ \xi : \mathbb{B}\!\mid_x \rightarrow U_1(A_\xi)$:
\[ {\mathfrak{s}_{((X,\tau), \mathbb{B})}}^*\overline{\mathbb{G}\mathbb{B}}\!\mid_{(x,\xi)} = \overline{\mathbb{G}\mathbb{B}}\!\mid_{\eta^{F(A_\xi)}_{G(\xi)}} =  U_2(L^2_{F(A_x)}(G(\xi)))
\]
while we have 
\[ G(\overline{\mathbb{B}}_{(x,\xi)})=GU_1(A_\xi) = U_2F(A_\xi) \]
And the value of the comorphism at the stalk is actually the right part of the factorization
\[ \begin{tikzcd}[row sep=small]
G(\mathbb{B}\mid_x) \arrow[]{rr}{G(\xi)} \arrow[]{rd}[swap]{\phi(\xi) = n_{G(\xi)}} & & GU_1(A_x) = U_2F(A_x) \\ & U_2L^2_{A_x}({G(\xi))} \arrow[]{ru}[swap]{U_2(u_{G(n\xi)})}
\end{tikzcd} \] that is $ \sigma^\sharp_{(x, \xi)} = U_2(u_{G(\xi)}) $, which comes uniquely from the mate \[ u_{G(\xi)} = \sigma^{A_x}_{\xi} : L^2_{A_x}({G(\xi))} \rightarrow F(A_x) \]In particular this ensures that $ ({\mathfrak{s}_{((X,\tau), \mathbb{B})}}, \sigma)  $ is a morphism of $U_2$-locally structured spaces as it behaves correctly at stalks.
\end{remark}

We should conclude with the following partial inverse result, which allows one to reconstruct right multi-adjoints from dualities by taking the fibers over the point:

\begin{theorem}\label{MRAJ from duality}
Let be $ (\mathfrak{A}, \mathfrak{B}, \iota, \mathfrak{S})$ a duality. Then the functor \[ \mathfrak{A}_* \stackrel{{\iota}_*}{\longrightarrow} \mathfrak{B}_* \]
is a right multi-adjoint.
\end{theorem}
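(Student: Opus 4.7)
The plan is to extract the family of local units from the points of the space underlying the spectrum $\mathfrak{S}(B)$. For $B$ in $\mathfrak{B}_*$, set $X_B := |\mathfrak{S}(B)|$. Given any point $x : * \to X_B$ of this topological space, the fibration $\mathfrak{A} \to \mathcal{T}op^{op}$ provides a cartesian lift $c_x : \mathfrak{S}(B) \to x^*\mathfrak{S}(B)$, with $x^*\mathfrak{S}(B)$ lying in the fiber $\mathfrak{A}_*$. Since $\iota$ is a strict morphism of fibrations, $\iota(c_x)$ is itself cartesian, and we define
\[ \eta_{B,x} \;=\; \iota(c_x) \circ \eta_B \;:\; B \longrightarrow \iota_*(x^*\mathfrak{S}(B)), \]
where $\eta_B : B \to \iota\mathfrak{S}(B)$ is the unit of the adjunction $\mathfrak{S} \dashv \iota$. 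The underlying continuous map of this composite is $* \xrightarrow{x} X_B \to *$, which is the identity on $*$, so $\eta_{B,x}$ genuinely lives in $\mathfrak{B}_*$. The collection $(\eta_{B,x})_{x \in X_B}$ is small because $X_B$ is an object of $\mathcal{T}op$.

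To verify the multi-universal property, I would take any $f : B \to \iota_*(A)$ with $A \in \mathfrak{A}_*$ and transpose it through the adjunction to $\bar{f} : \mathfrak{S}(B) \to A$ in $\mathfrak{A}$. Since $|A| = *$, the underlying continuous map of $\bar{f}$ is a continuous map $* \to X_B$, i.e.\ a point $x_f$ of $X_B$. The universal property of the cartesian lift $c_{x_f}$ then yields a unique $u : x_f^*\mathfrak{S}(B) \to A$ in the fiber $\mathfrak{A}_*$ with $\bar{f} = u \circ c_{x_f}$. Applying $\iota$ and precomposing with $\eta_B$ (using the standard triangle identity for the transpose) gives the desired factorization $f = \iota_*(u) \circ \eta_{B,x_f}$.

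Uniqueness is dual bookkeeping: any alternative factorization $f = \iota_*(v) \circ \eta_{B,y}$ transposes back to a factorization of $\bar{f}$ through $c_y$. As a cartesian factorization pins down both the base map and the comparison, this forces $y = |\bar{f}| = x_f$ and $v = u$. The main point requiring care is not really any one step in isolation but the careful bookkeeping of which fiber each map lives in, and the verification that the transpose correspondence $f \leftrightarrow \bar{f}$ converts factorizations through $\eta_{B,x}$ on the $\mathfrak{B}$-side into factorizations through $c_x$ on the $\mathfrak{A}$-side; this is guaranteed by the fact that $\iota$ preserves cartesian arrows, which is exactly the condition built into the definition of morphism of duality.
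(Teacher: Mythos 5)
Your proposal is correct and follows essentially the same route as the paper: transpose $f$ through $\mathfrak{S}\dashv\iota$, read off the underlying point of $|\mathfrak{S}(B)|$, and take the cartesian lift of that point to produce the multi-universal family indexed by the (small) set of points. The paper phrases the last step by first lifting in $\mathfrak{B}$ and then observing the lift lives in $\mathfrak{A}$, whereas you lift directly in $\mathfrak{A}$ and invoke strictness of $\iota$ as a morphism of fibrations, but this is only a difference of bookkeeping.
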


\begin{proof}
This exploits the properties of fibrations. Let be $ \mathbb{B} $ an object of the fiber $ \mathfrak{B}_*$: we prove that the comma category $ \mathbb{B}\downarrow \iota_*$ has a small initial family. For any $\mathbb{A}$ in $\mathfrak{A}$, a morphism $ f : \mathbb{B} \rightarrow \iota\mathbb{A}$ factorizes uniquely through $\mathfrak{S}\mathbb{B}$ as 
\[\begin{tikzcd}[ sep=large]
	& {\iota\mathfrak{S}\mathbb{B}} & {\mathbb{B}} \\
	{\iota\mathbb{A}} & {|\iota\mathfrak{S}\mathbb{B}|} & {*} \\
	{*}
	\arrow[Rightarrow, from=3-1, to=2-3, curve={height=12pt}, no head]
	\arrow["{|\iota g|}", from=3-1, to=2-2]
	\arrow["{!}", from=2-2, to=2-3]
	\arrow["{f}"', from=1-3, to=2-1, curve={height=-12pt}]
	\arrow["{\eta_\mathbb{B}}"', from=1-3, to=1-2]
	\arrow["{\iota g}", from=1-2, to=2-1]
\end{tikzcd}\]
so that $f$ defines a point $ |\iota g| : * \rightarrow |\iota\mathfrak{S}\mathbb{B}|$. Now, using the cartesianness of the fibration, there is a canonical lifting of this points, factorizing $f$ through the dashed vertical map as below:
\[\begin{tikzcd}
	& {|\iota g|_*(\iota\mathfrak{S}\mathbb{B})} & {\iota\mathfrak{S}\mathbb{B}} \\
	{\iota\mathbb{A}} & {*} & {|\iota\mathfrak{S}\mathbb{B}|} \\
	{*}
	\arrow[Rightarrow, from=3-1, to=2-2, no head]
	\arrow["{|\iota g|}"', from=3-1, to=2-3, curve={height=12pt}]
	\arrow["{|\iota g|}", from=2-2, to=2-3]
	\arrow["{\overline{|\iota g|}}"', from=1-3, to=1-2]
	\arrow["{\iota g}"', from=1-3, to=2-1, curve={height=-12pt}]
	\arrow[from=1-2, to=2-1, dashed]
\end{tikzcd}\]
and moreover this lift is actually in $\mathfrak{A}$ as it is itself equiped with a fibration, so we have in $\mathfrak{A}$
\[\begin{tikzcd}
	& {|\iota g|_*(\iota\mathfrak{S}\mathbb{B})} & {\mathfrak{S}\mathbb{B}} \\
	{\mathbb{A}} & {*} & {|\iota\mathfrak{S}\mathbb{B}|} \\
	{*}
	\arrow[Rightarrow, from=3-1, to=2-2, no head]
	\arrow["{|\iota g|}"', from=3-1, to=2-3, curve={height=12pt}]
	\arrow["{|\iota g|}", from=2-2, to=2-3]
	\arrow["{\overline{|\iota g|}}"', from=1-3, to=1-2]
	\arrow["{\iota g}", from=1-3, to=2-1, curve={height=-12pt}]
	\arrow[from=1-2, to=2-1, dashed]
\end{tikzcd}\]
Hence the functor associating to any point its cartesian lifts 
\[ \begin{array}{rcl}
     pt\mid \frak{S}\mathbb{B}  \mid & \longrightarrow &  \frak{S}\mathbb{B} \downarrow \iota_*   \\
    p & \longmapsto &   \overline{p} : \frak{S}\mathbb{B} \rightarrow p_*\frak{S}\mathbb{B}
\end{array}    \]
indexes an initial family in the comma under $ \mathbb{B}$. This exhibits $\iota_*$ as a right multi-adjoint.
\end{proof}

\printbibliography

\end{document}